\DeclareMathOperator{\disc}{disc}
\DeclareMathOperator{\res}{Res}
\DeclareMathOperator{\gal}{Gal}
\DeclareMathOperator{\rr}{r}
\newtheorem{theorem}{Theorem}[section] 
\newtheorem{lemma}[theorem]{Lemma}     
\newtheorem{corollary}[theorem]{Corollary}
\newtheorem{proposition}[theorem]{Proposition}
\newtheorem{conjecture}[theorem]{Conjecture}
\theoremstyle{definition}
\newtheorem{remark}{Remark}
\newtheorem{definition}{Definition}
\newtheorem{ex}{Example}
\renewcommand{\ge}{\geqslant}
\renewcommand{\leq}{\leqslant}
\renewcommand{\geq}{\geqslant}
\newcommand{\N}{\mathbf{N}}
\newcommand{\Z}{\mathbf{Z}}
\newcommand{\Q}{\mathbf{Q}}
\newcommand{\F}{\mathbf{F}}
\newcommand{\C}{\mathbf{C}}
\title{On the irreducibility and monodromy of Tutte polynomials}
\author{Andrew Goodall}
\address{Computer Science Institute (I\'UUK), Faculty of Mathematics and Physics, Charles University, Malostransk\'e n\'am. 25, 118 00 Praha 1, Czech Republic. {\tt andrew@iuuk.mff.cuni.cz}}
  \author{Florent Jouve}
  \address{Universit\'e de Bordeaux, CNRS, Bordeaux INP, IMB, UMR 5251, F-33400, Talence, France. {\tt florent.jouve@math.u-bordeaux.fr}}
\author{Jean-S\'ebastien Sereni}
\address{Service public français de la recherche, Centre National de la Recherche Scientifique (ICube, CSTB), Strasbourg, France. {\tt jean-sebastien.sereni@cnrs.fr}}
\keywords{Tutte polynomials, matroids, Galois theory, monodromy, permutation groups}
\subjclass[2020]{05C31, 11R32, 11N36, 20B10}
\begin{document}

\begin{abstract}
We study algebraic properties of the Tutte polynomial of a matroid and its generalizations to other combinatorially defined bivariate polynomial invariants. Merino, de Mier and Noy showed that the Tutte polynomial of a connected matroid is irreducible, and Bohn, Cameron and M\"uller conjectured the stronger property that the Galois/mo\-nodromy group of the Tutte polynomial of a connected matroid of rank~$r$ is isomorphic to the full symmetric group on~$r$ letters. 
First, we generalize the result of Merino--de Mier--Noy to the context of general ranked sets by exploiting a recent translation of the Brylawski relations, satisfied by the coefficients of the Tutte polynomial, into a functional identity. Second, we give the first confirmation of the conjecture of Bohn--Cameron--M\"uller for infinite families of connected matroids, including the cycle graphs and the uniform matroids. Moreover, we apply the large sieve to obtain a probabilistic statement showing that suitable linear combinations of coprime Tutte polynomials generically satisfy the conjecture. 
\end{abstract}

\maketitle

\section{Introduction}\label{sec:intro}

The Tutte polynomial is a bivariate polynomial invariant of finite graphs that includes many important specializations, such as the
chromatic polynomial, reliability polynomial, partition function of the Potts model in statistical physics,
and the Jones polynomial of an alternating knot (encoded as a plane graph). The Tutte
polynomial is more generally defined as an invariant of finite matroids and in this guise has served to bridge disparate areas
of combinatorics (such as hyperplane arrangements and coding theory) and appears in various disciplines
(such as the random cluster model in physics and DNA sequencing in biology). Beyond matroids, the definition of the Tutte polynomial extends further to ranked sets~\cite{gordon15} where the rank function does not need satisfy all the axioms for a matroid rank function (see Section~\ref{sec:rk} below for a precise definition).

In the present work we continue the study initiated in~\cite{GJS} of ``generic'' properties of Tutte polynomials among bivariate~$\Z$-polynomials. While~\cite{GJS} focuses on linear algebraic aspects (what is the dimension of the linear span of natural finite families of Tutte polynomials?), here we seek to understand the expected algebro--geometric structure of such polynomials.

Let us recall that for~$M=(E,\rr)$ a matroid on groundset~$E$ and with rank function~$\rr$, its \emph{Tutte
polynomial} is defined by  
\[
T_M(x,y)=\sum_{A\subseteq E}(x-1)^{\rr(E)-\rr(A)}(y-1)^{|A|-\rr(A)}.
\]
It is an  element of~$\Z[x,y]$ with~$\deg_x T_M(x,y)=\rr(M)$  and~$\deg_y T_M(x,y)=|E|-\rr(M)$, where~$\rr(M)\coloneqq \rr(E)$. If~$M$ is the cycle matroid of a graph~$G=(V,E)$ with~$c(G)$ connected components then~$\rr(M)=|V|-c(G)$. The chromatic polynomial of~$G$ coincides (up to sign and a factor of~$x^{c(G)}$) with the specialization~$T_M(1-x,0)$.

  Given a matroid~$M=(E,\rr)$, its Tutte polynomial~$T_M(x,y)$ naturally defines a plane algebraic curve over~$\Q$. We will work in the slightly more general context where~$R$ is a fixed unique factorization domain (UFD) with field of fractions~$k$ 
  and where~$T_M(x,y)$ is seen as an element of~$R[x,y]$ through the application of the canonical ring homomorphism~$\phi\colon\Z\to R$ (sending~$1$ to~$1$) to its coefficients. In this framework we will denote by~$C_M/k$ the plane affine curve attached to~$T_M$, and by~$\mathbb A^1_k$ the affine line over~$k$. Our main focus in the present work is on properties of the~$k$-morphism~$\pi\colon C_M\to \mathbb A^1_k$ of degree~$\rr(M)$ defined for any extension~$K/k$ on~$K$-rational points~$(x,y)$ of~$C_M$ by~$\pi(x,y)=y$. 
  In the case where~$C_M/k$ is irreducible, two groups are naturally associated with the morphism~$\pi$ (see~\cite{Har79}*{\S 1}): 
 \begin{itemize}
 \item the Galois group of the (normal closure of the) field extension defined using the induced inclusion~$\pi^*\colon k(\mathbb A^1_k)=k(y)\to k(C_M)$,
 \item the monodromy group corresponding to the topological unbranched cover~$V\to U$ obtained by restricting~$\pi$ to suitable Zariski open subsets~$U$ and~$V$ of~$\mathbb A^1_k$ and~$C_M$, respectively. 
 \end{itemize}
 In the above setting Harris shows~(\cite{Har79}*{\S 1, Proposition}) that these two groups coincide, a fact that has been exploited for geometric purposes~(\cites{Har79,LeSo09,SoWh15}). In the sequel we will denote by~$G_{k,y}(T_M(x,y))$ (or~$G_{k,y}(T_M)$ for brevity) the Galois/monodromy group attached to~$T_M$ via the morphism~$\pi$ in the way described above. This group can be seen as a permutation group on~$\rr(M)$ letters and equals the Galois group of the splitting field of~$T_M(x,y)$ inside a fixed separable closure of~$k(y)$. The irreducibility assumption on~$C_M/k$ corresponds to the transitive action of~$G_{k,y}(T_M)$ seen as a permutation group, and this group is known to be a transitive subgroup of~$\mathfrak S_{\rr(M)}$ when~$M$ is a connected matroid~(\cite{MdMN}) as long as~$\mathrm{char}(k)$ does not divide the constant term of the monomial of degree~$1$ of the~$x$-polynomial~$T_M(x,y)$.
 One might wonder what finer properties than transitivity might be satisfied by the permutation group~$G_{k,y}(T_M)$, such as its action being primitive or doubly transitive, and whether these properties correspond to a structural property of~$M$ like transitivity does to connectivity. However, the following conjecture of Bohn--Cameron--M\"uller~\cite{BCM}*{Conj. 9} directs us rather to consider whether~$G_{k,y}(T_M)$
when transitive in fact coincides as a permutation group with the full symmetric group~$\mathfrak S_{\rr(M)}$:
 
 \begin{conjecture}[Bohn--Cameron--M\"uller]\label{conj:BCM}
Let~$M=(E,\rr)$ be a connected matroid on finite groundset~$E$ with~$\rr(M)>0$, and
let~$K$ be a field of characteristic zero. Then~$G_{K,y}(T_M)$ is maximal \emph{i.e} it is isomorphic to the symmetric group on~$\rr(M)$ letters.
\end{conjecture}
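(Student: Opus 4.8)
The plan is to prove that $G_{K,y}(T_M)\cong\mathfrak S_{\rr(M)}$ by combining a reduction to geometric monodromy with the classical theorem of Jordan: a \emph{primitive} permutation group that contains a \emph{transposition} is the full symmetric group. Write $r=\rr(M)$. A preliminary observation makes the Galois theory over $K(y)$ clean: $T_M$ is monic of degree $r$ in $x$, since in the defining sum the power $x^r$ can arise only from $(x-1)^{r-\rr(A)}$ with $\rr(A)=0$, and a connected matroid with $r>0$ has no loops, forcing $A=\emptyset$. Thus $G_{K,y}(T_M)$ is the Galois group of a monic separable polynomial over $K(y)$ acting on its $r$ roots.

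First I would reduce to the case where the field of constants is algebraically closed, that is, to the geometric monodromy group. Enlarging the constants can only shrink the Galois/monodromy group, so $G_{L,y}(T_M)\le G_{K,y}(T_M)$ whenever $K\subseteq L$ (by natural irrationalities applied to $L(y)/K(y)$). Moreover the geometric monodromy group $G_{\overline K,y}(T_M)$ is a normal subgroup of $G_{K,y}(T_M)$, and since $C_M$ is defined over $\Q$ this geometric group is independent of the chosen algebraically closed characteristic-zero field of constants; it may therefore be computed over $\C$ and equals, canonically as a subgroup of $\mathfrak S_r$, the group $G_{\overline{\Q},y}(T_M)$. Consequently it suffices to prove that the geometric monodromy group is all of $\mathfrak S_r$: over $\C$ this is the topological monodromy of the branched cover $\pi\colon C_M\to\mathbb A^1$, $(x,y)\mapsto y$, acting on the $r$ branches of the roots of $T_M(x,y)$.

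Transitivity is precisely geometric irreducibility of $C_M$, which I would obtain from an absolute strengthening of the Merino--de Mier--Noy theorem together with the connectivity of $M$. To produce a transposition I would analyse the branch locus, namely the zeros of $\Delta(y):=\disc_x T_M(x,y)\in\C[y]$. The local monodromy around a branch value $y_0$ is a transposition exactly when the fibre of the smooth model over $y_0$ has a single ramification point, of index $2$, with all other points unramified; a sufficient condition is that $y_0$ be a \emph{simple} zero of $\Delta$ lying under a smooth point of the plane curve $C_M$, for then the tame relation $\operatorname{ord}_{y_0}\Delta=\sum_P(e_P-1)$ forces a unique double point. I would therefore show that $\Delta$ is not a perfect power and has a simple root at which $C_M$ is smooth, using the explicit leading behaviour of $T_M$ in $x$ and in $y$ to control both the squarefree part of $\Delta$ and the finitely many singular fibres.

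The decisive and hardest step is \emph{primitivity}: one must rule out every nontrivial block system, equivalently every intermediate cover $C_M\to C'\to\mathbb A^1$ with $1<\deg(C'/\mathbb A^1)<r$, equivalently show that $\C(y)\subset\C(C_M)$ admits no proper intermediate field. Two routes suggest themselves: either prove $2$-transitivity directly, which amounts to irreducibility of the fibre product $C_M\times_{\mathbb A^1}C_M$ away from the diagonal, or exhibit an $(r-1)$-cycle in the monodromy (from a second well-chosen branch value, or from reduction of $T_M$ modulo a suitable prime and factorisation over $\F_p(y)$), since a transitive group containing an $(r-1)$-cycle has transitive point stabiliser and is therefore $2$-transitive, hence primitive. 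Granting primitivity and a transposition, Jordan's theorem gives $G_{\overline K,y}(T_M)=\mathfrak S_r$, and the reduction step upgrades this to $G_{K,y}(T_M)\cong\mathfrak S_{\rr(M)}$ for every characteristic-zero $K$. I expect primitivity to be the genuine obstacle: there is no known uniform geometric or combinatorial mechanism forbidding an intermediate quotient of $\pi$ for an arbitrary connected matroid, and both the requisite fibre-product irreducibility and the location of a uniform $(r-1)$-cycle in the branch data appear at present to be as hard as the conjecture itself -- which is why only special families such as the uniform matroids and the cycle matroids of cycle graphs are currently within reach, and the case of general connected matroids remains open.
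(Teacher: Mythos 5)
The statement you are proving is Conjecture~\ref{conj:BCM}: it is open, and the paper does not prove it. The paper establishes only special cases --- cycle graphs (Theorem~\ref{th:CnCase}), uniform matroids (Theorem~\ref{th:UnifMat}), thick cycles under arithmetic conditions (Theorems~\ref{th:ThickCnCase} and~\ref{th:ThickCnCase-2}) --- plus a probabilistic statement (Theorem~\ref{Th:Gallagher}) that suitable linear combinations of coprime Tutte polynomials generically have maximal Galois group. Your proposal, by its own admission, is a program rather than a proof: you explicitly concede that the primitivity step ``appears at present to be as hard as the conjecture itself.'' So the honest verdict is that there is a genuine gap, and you have correctly located it. To your credit, the sound parts of your outline are exactly the mechanisms the paper deploys in its special cases: monicity of~$T_M$ in~$x$ (correct --- a connected matroid with~$\rr(M)>0$ and~$|E|\geq 2$ has no loops, so only~$A=\varnothing$ contributes to~$x^{\rr(M)}$); reduction to a fixed constant field via natural irrationalities (Remark~\ref{rem:basefieldext}); transitivity from absolute irreducibility, which does follow from Corollary~\ref{cor:dMMN} applied with~$R=\overline{K}$; and the closing appeal to Jordan's theorem, i.e.\@ primitive plus a transposition yields~$\mathfrak S_r$ (\cite{DiMo}*{Th.~3.3A}), which is precisely how the paper concludes in Theorems~\ref{th:ThickCnCase} and~\ref{th:ThickCnCase-2}.

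Beyond the admitted primitivity gap, note that your transposition step is also unproved in the required generality: you would need~$\disc_x T_M(x,y)$ to have a simple zero lying under a smooth point of~$C_M$ \emph{uniformly over all connected matroids}, and nothing known about Tutte polynomials (in particular nothing in the Brylawski relations, which constrain low-order coefficients only) forces the bivariate discriminant to have a squarefree part with this property. The paper never argues this way; it obtains transpositions only for specific families, via inertia subgroups at odd primes dividing discriminants of integer \emph{specializations} --- Lemma~\ref{lem:Selmer}\eqref{itselmer3} following Osada's method for~$y=-1$, and Lemma~\ref{lem:sqdisc} building on~\cite{MoSa94} and~\cite{LNV84} for~$y=1$ --- where explicit trinomial discriminant formulas (\cite{Swa}*{Th.~2}) are available. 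Likewise, the primitivity the paper does achieve comes from polynomial indecomposability (\cite{DGT}*{Cor.~1} with~\cite{Tur}*{Lem.~3.1}, exploiting the special shape~$f(x)+g(y)$ of~$T_{U_{a,b}}$ and~$T_{C_n}$) or from prime-degree transitivity; neither device is visible for an arbitrary connected matroid, whose Tutte polynomial has no such additive separation of variables. So your outline is a faithful map of the known partial-results technology and of where it stops, but it does not prove the statement --- nor does the paper claim to.
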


The conjecture has been computationally confirmed~\cite{BCM} for connected graphic matroids~$M$ such that~$\rr(M)\leq 10$.  
However, prior to the present work the conjecture has not as far as we know been proved for any infinite family of connected matroids. 
 In~\cite{BCM} 
 the analogue of Conjecture~\ref{conj:BCM} is proved for the multivariate Tutte polynomial of a matroid~(\cite{BCM}*{Th.~1} in which there are as many variables as elements in the groundset). On the other hand, specializing the Tutte polynomial at combinatorially meaningful values to obtain a univariate polynomial is not expected to lead to analogous behaviour. Indeed part of the motivation for stating and studying Conjecture~\ref{conj:BCM} comes from structural aspects of the chromatic polynomial. Once divided by the product of linear factors corresponding to its chromatic number, does the remaining ``interesting'' part of the chromatic polynomial of a graph resemble a random integer polynomial? For instance does it generically have maximal Galois group (as is the case for random integer polynomials~\cite{Bha25})?
It turns out that this does not seem to be the case (as theoretical and computational evidence obtained in~\cites{CaMo17,Mor12} indicate). In this sense, the bivariate Tutte polynomial of a matroid is currently believed to lie at the frontier of polynomial matroid invariants whose Galois group is almost surely maximal.

In another direction, to prove that the Galois group of the Tutte polynomial of a connected matroid is transitive we need only use the Brylawski relations satisfied by the coefficients and the fact that a matroid is connected if and only if the coefficient of~$x$ in its Tutte polynomial (Crapo's beta invariant~\cite{Crapo}) is non-zero. The class of bivariate polynomials satisfying Brylawski's relations includes not just Tutte polynomials of matroids but also rank-nullity polynomials of ranked sets generally. Drawing on a recent alternative derivation~\cite{BCCP23} of Brylawski's relations via the simplification the Tutte polynomial undergoes on the hyperbola~$(x-1)(y-1)=1$, we prove in Section~\ref{sec:irreducibility} that a large class of bivariate polynomials satisfying Brylawski's relations and with non-zero coefficient of~$x$ are irreducible, among which are the known cases of the Tutte polynomial of a connected matroid~\cite{MdMN} or connected delta matroid~\cite{EMGMNV22}. For some classes of ranked sets, the coefficient of~$x$ in the rank-nullity polynomial being non-zero is -- in a similar way to matroids -- a necessary and sufficient condition for not being a direct sum of smaller ranked sets (e.g. for delta matroids~\cite{EMGMNV22}), but for others the condition is only sufficient (e.g. for greedoids, see Remark~\ref{rem:exGordon} below).
 We also exhibit an infinite family of ranked sets whose Tutte polynomial is irreducible but does not have maximal Galois group. 

\medskip
The paper is organized as follows. In~\S\ref{sec:rk}, we study irreducibility properties of the Tutte
polynomial in the broader context of ranked sets. Inspired by the recent work of Beke et al.~\cite{BCCP23}, we
introduce the notion of a \emph{Brylawski polynomial},  which includes the rank-nullity polynomials of a
ranked set as a special case, and establish analogous criteria for the irreducibility of a Brylawski
polynomial to that established for the Tutte polynomial of a matroid~\cite{MdMN}. Next we prove
in~\S\ref{sec:prob} a probabilistic statement (which is, to a large extent, of independent interest) showing
that particular~$\Z[x]$-linear combinations of pairs of coprime bivariate polynomials that are monic of the
same degree seen as~$x$-polynomials (\emph{e.g.} the respective, and distinct, Tutte polynomials of two
connected matroids of the same size and rank) generically satisfy Conjecture~\ref{conj:BCM}. We do so by first
generalizing a sieve result of Gallagher towards van der Waerden's Conjecture. We conclude by showing
in~\S\ref{sec:fam} that Conjecture~\ref{conj:BCM} might be extended to some, but not all, more general ranked
sets and that it holds for some infinite families of connected matroids (including cycle graphs and
uniform~matroids).

\section{The Tutte polynomial of a ranked set}\label{sec:rk}

\begin{definition}\label{def:rank_function}
For a finite set~$E$, a \emph{rank function} on~$E$ is an integer-valued function~$\rr\colon 2^E\to\Z$
satisfying
\[\rr(\varnothing)=0,\quad
\rr(A)\leq \rr(E), \quad\mbox{ and } \quad \rr(A)\leq |A|,\quad \mbox { for all } A\subseteq E.\]
The pair~$S=(E,\rr)$ is called a \emph{ranked set}, and~$E$ is its \emph{groundset}.
\end{definition}

While~$0=\rr(\varnothing)\leq \rr(E)$, the rank function~$r$ may take negative values on proper subsets of~$E$, although when~$\rr$ is monotone, which is the case for matroids, antimatroids and greedoids, it takes nonnegative values on all subsets~\cite{Gordon12}.\footnote{A rank function~$\rr$ on~$E$ defines a \emph{matroid} if additionally it satisfies the following properties: 
\begin{itemize}
\item[(i)] Submodularity:  \[\rr(A\cup B)+\rr(A\cap B)\leq \rr(A)+\rr(B),\]   for~$A,B\subseteq E$; 
\item[(ii)] Adding an element cannot  decrease the rank (monotonicity), and any increase is by at most 1:
\[\rr(A)\leq \rr(A\cup\{e\})\leq \rr(A)+1,\]
for~$A\subseteq E$ and~$e\in E$.
\end{itemize}

For the rank function of a \emph{greedoid} the  additional conditions are weaker versions of those for a matroid:
\begin{itemize}
\item[(i)] Local submodularity:  If~$r(A)=r(A\cup \{e\})=r(A\cup\{f\})$, then~$r(A\cup\{e,f\})=r(A)$,   for~$A\subseteq E$ and~$e,f\in E$; 
\item[(ii)] Monotonicity:
\[r(A)\leq r(A\cup\{e\}),\]
for~$A\subseteq E$ and~$e\in E$.
\end{itemize}
Item (ii) implies that~$\rr$ takes nonnegative values. Antimatroids are special types of greedoids, the variation in item~(i) too complicated to state here. } 

To a ranked set~$S=(E,\rr)$ we associate the bivariate polynomial 
 \begin{equation}\label{eq:defTutte}
 T_S(x,y)=\sum_{A\subseteq E}(x-1)^{\rr(E)-\rr(A)}(y-1)^{|A|-\rr(A)},
 \end{equation}
which we call the corank-nullity polynomial of~$S$, better known as the Tutte polynomial when~$S$ is a matroid.
Along the hyperbola~$(x-1)(y-1)=1$ the polynomial~$T_S$ simplifies considerably to a polynomial dependent only on the size and rank of the groundset~$E$.    

\begin{proposition}\label{prop:TutteBrylawski}
For a ranked set~$S=(E,\rr)$,
\begin{equation}\label{eq:hyperbola1}
 (y-1)^{\rr(E)}T_S\big(\frac{y}{y-1},y\big)=y^{|E|}.
\end{equation}
In other words, 
\[T_S(x,y)\equiv x^{\rr(E)}y^{|E|-\rr(E)}\quad\pmod{xy-x-y}.\]
\end{proposition}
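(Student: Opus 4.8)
The plan is to establish the functional identity \eqref{eq:hyperbola1} by direct substitution, and then observe that it is merely a reformulation of the stated congruence. The observation that makes everything collapse is that the substitution $x=y/(y-1)$ gives $x-1=1/(y-1)$, so that once denominators are cleared the rank function disappears completely from the exponents, leaving a quantity that depends only on $|E|$ and $\rr(E)$.

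First I would substitute $x=y/(y-1)$ into the defining sum \eqref{eq:defTutte}, so that the $A$-th summand becomes $(y-1)^{-(\rr(E)-\rr(A))}(y-1)^{|A|-\rr(A)}$. Multiplying through by $(y-1)^{\rr(E)}$ and adding the three exponents of $(y-1)$ occurring in the $A$-th term yields
\[
\rr(E)-\bigl(\rr(E)-\rr(A)\bigr)+\bigl(|A|-\rr(A)\bigr)=|A|,
\]
which is independent of~$\rr$. Hence the left-hand side of \eqref{eq:hyperbola1} equals
\[
\sum_{A\subseteq E}(y-1)^{|A|}=\sum_{k=0}^{|E|}\binom{|E|}{k}(y-1)^k=\bigl((y-1)+1\bigr)^{|E|}=y^{|E|}
\]
by the binomial theorem, which is exactly \eqref{eq:hyperbola1}.

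For the reformulation as a congruence, I would pass to the variables $u=x-1$ and $v=y-1$, in which $xy-x-y=uv-1$ and $T_S=\sum_{A}u^{\rr(E)-\rr(A)}v^{|A|-\rr(A)}$. Reducing modulo $uv-1$ amounts to passing to the localization $R[u,v]/(uv-1)\cong R[u,u^{-1}]$, i.e. setting $v=u^{-1}$; the rank-dependent exponents then cancel just as above, leaving $T_S\equiv u^{\rr(E)}(1+u^{-1})^{|E|}=(u+1)^{\rr(E)}(v+1)^{|E|-\rr(E)}$, which is precisely $x^{\rr(E)}y^{|E|-\rr(E)}$. Since this equality holds in $R[u,u^{-1}]$, the difference $T_S(x,y)-x^{\rr(E)}y^{|E|-\rr(E)}$ lies in the ideal $(uv-1)=(xy-x-y)$, giving the congruence. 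Alternatively, over a field one may invoke irreducibility of $xy-x-y$: the rational parametrization $x=y/(y-1)$ together with \eqref{eq:hyperbola1} shows that $T_S-x^{\rr(E)}y^{|E|-\rr(E)}$ vanishes on the zero locus, whence divisibility follows.

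There is essentially no analytic obstacle: the entire content of the proposition is the telescoping cancellation of the rank-dependent exponents, and the only point deserving care is the passage from the rational-function identity on the hyperbola to a genuine polynomial congruence, which is cleanly handled by the Laurent-ring computation (or, over a field, by the irreducibility of $xy-x-y$).
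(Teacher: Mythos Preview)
Your proof is correct and follows essentially the same approach as the paper: both substitute $x=y/(y-1)$ directly into the defining sum, observe that the rank-dependent exponents cancel to leave $\sum_{A\subseteq E}(y-1)^{|A|}=y^{|E|}$ via the binomial theorem. You additionally spell out the passage from the rational identity to the polynomial congruence modulo $xy-x-y$, which the paper simply states as a reformulation without further comment.
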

\begin{proof}
 By the definition of~$T_S$, the left-hand side of~\eqref{eq:hyperbola1} equals
\[
 \sum_{A\subseteq E}\Big(\frac{y-(y-1)}{y-1}\Big)^{\rr(E)-\rr(A)}(y-1)^{\rr(E)+|A|-\rr(A)} =
  \sum_{A\subseteq E}(y-1)^{|A|}=y^{|E|}.
\]
\end{proof}

Suppose~$S_1=(E_1, \rr_1)$ and~$S_2=(E_2, \rr_2)$ in which~$\rr_1$ and~$\rr_2$ are rank functions on disjoint sets~$E_1$ and~$E_2$, respectively. Then the \emph{direct sum}~$S_1\oplus S_2=(E_1\cup E_2, \rr)$ has rank function on~$E=E_1\cup E_2$ defined by~$\rr(A_1\cup A_2)=\rr_1(A_1)+\rr_2(A_2)$, where~$A_1\subseteq E_1$ and~$A_2\subseteq E_2$. Then
$T_{S_1\oplus S_2} (x,y)=T_{S_1}(x,y)T_{S_2}(x,y)$.
In other words, if~$S$ can be expressed as the direct sum of ranked sets on non-empty groundsets, then ~$T_S(x,y)$ is a reducible polynomial. 
When~$S$ is a matroid, Merino et al.~\cite{MdMN} showed that if~$T_S(x,y)$ is reducible, then there are non-empty matroids~$S_1$ and~$S_2$ such that~$S=S_1\oplus S_2$, i.e.~$S$ is not connected.
They use the fact that when~$|E|\geq 2$ the matroid~$S$ is connected if and only if the coefficient of~$x$ in~$T_S(x,y)$ is non-zero (a result due to Crapo~\cite{Crapo}).\footnote{The proof of this depends on the fact that for a connected matroid~$S$ and~$e\in E$
either~$S\backslash e$ or~$S/e$ is also connected~\cite{Tutte}*{\S 6.5} (see also~\cite{Crapo}*{p.~410}); that~$t_{1,0}$, like all the coefficients of the Tutte polynomial, is non-negative (as is easily seen by induction using its deletion-contraction recurrence); and finally that~$t_{1,0}$ satisfies the deletion-contraction recurrence for the Tutte polynomial.  These properties do not extend to ranked sets generally.}
For greedoids (ranked sets generally) there are examples of~$S$ not expressible as the direct sum of greedoids (ranked sets) on non-empty groundsets such that~$T_S(x,y)$ is reducible, and examples of~$S$ such that~$T_S(x,y)$ is irreducible but the coefficient of~$x$ is zero. (For the latter, see Remark~\ref{rem:exGordon} below.)       
\begin{definition}\label{def:dual}
Let~$S=(E,\rr)$ where~$E$ is a finite set and~$\rr$ is a rank function on~$E$.
The dual rank function~$\rr^*$ is defined by
\[\rr^*(A)=|A|+\rr(E\setminus A)-\rr(E),\]
for~$A\subseteq E$, and we write~$S^*=(E,\rr^*)$ for the corresponding ranked set. 
\end{definition}
The dual~$\rr^*$ of a rank function is again a rank function because ~$\rr^*(\varnothing)=0=\rr(\varnothing)$; 
the condition~$\rr^*(A)\leq |A|$ is satisfied for all~$A\subseteq E$ as
$\rr^*(A)=|A|-[\rr(E)-\rr(E\setminus A)]\leq |A|$ since~$\rr(E\setminus A)\leq\rr(E)$;
and
$\rr^*(A)=|A|+\rr(E\setminus A)-\rr(E)\leq |A|+|E\setminus A|-\rr(E)=|E|-r(E)=\rr^*(E)$.
The dual of the dual rank function satisfies
$(\rr^*)^*(A)=|A|+|E\setminus A|+\rr(A)-\rr(E)-[|E|-\rr(E)] = \rr(A)$.
Thus~$(S^*)^*=S$, and it is routine to verify the duality formula~$T_{S^*}(x,y)=T_S(y,x)$. 

We have~$\deg_x T_S=\rr(E)-\min_A \rr(A)\geq \rr(E)$ with equality if and only if~$\rr$ takes just non-negative values;
and, noting that~$|A|-\rr(A)=\rr^*(E)-\rr^*(E\setminus A)$,~$\deg_y T_S=\rr^*(E)-\min_A\rr^*(A)\geq |E|-\rr(E)$, with equality if and only if~$\rr^*$ takes just non-negative values.

\subsection{Brylawski polynomials}

\begin{definition}\label{def:BryPol}
A bivariate polynomial~$U(x,y)$  over a commutative ring~$R$ such that
~$(y-1)^rU(\frac{y}{y-1},y)=cy^n$ for some non-negative integer~$n$, integer~$r$, and non-zero constant~$c\in R$ is 
called an~$(n,r)$-\emph{Brylawski polynomial} (with constant~$c$).
Equivalently,~$(x-1)^{n-r}U(x,\frac{x}{x-1})=cx^n$.
\end{definition}

In Definition~\ref{def:BryPol}, we may have~$r<0$: for example,~$U(x,y)=(y-1)^{\ell}$ is a ~$(0,-\ell)$-Brylawski polynomial (with constant~$c=1$). Similarly, we may have~$r>n$: for example,~$(x-1)^k$ is a~$(0,k)$-Brylawski polynomial (with constant~$c=1$). 
By Proposition~\ref{prop:TutteBrylawski}, the corank-nullity polynomial~$T_S(x,y)$ of a ranked set~$S=(E,\rr)$ is an~$(|E|, \rr(E))$-Brylawski polynomial (with constant~$c=1$).
Here~$0\leq r=\rr(E)\leq n=|E|$.
 
More generally, polynomials of the form~$c_0x^{k_1}y^{\ell_1}(x-1)^{k_2}(y-1)^{\ell_2}$, with~$c_0\in R\setminus\{0\}$,~$k_i,\ell_j\in \Z_{\geq 0}$, are clear instances of Brylawski polynomials (for the choice~$(n,r)=(k_1+\ell_1,k_1+k_2-\ell_2)$).

If~$U(x,y)$ is an~$(n,r)$-Brylawski polynomial and~$V(x,y)$ is an~$(m,s)$-Brylawski polynomial, then~$U(x,y)V(x,y)$ is an~$(n+m,r+s)$-Brylawski polynomial. (Lemma~\ref{lem:product_Brylawski} below gives the converse.) In particular, if~$U(x,y)$ is an~$(n,r)$-Brylawski polynomial then~$x^{k_1}y^{\ell_1}(x-1)^{k_2}(y-1)^{\ell_2}U(x,y)$ is an~$(n+k_1+\ell_1, r+k_1+k_2-\ell_2)$-Brylawski polynomial. 

For a polynomial~$U(x,y)=\sum u_{i,j}x^iy^j\in R[x,y]$, we set~$\deg_x U=\max\{i:\exists j\:\: u_{i,j}\neq 0\}$
and~$\deg_y U=\max\{j:\exists i\:\: u_{i,j}\neq 0\}$.

\begin{proposition}\label{prop:degree_bounds}
If~$U(x,y)\in R[x,y]$ is an~$(n,r)$-Brylawski polynomial, then~$\deg_x U\geq r$ and~$\deg_y U\geq n-r$.

Moreover, if~$R$ is a UFD, then any univariate factors of~$U(x,y)$ are necessarily of the form~$a(x-1)^kx^\ell$ or~$b(y-1)^ky^\ell$, for~$a,b\in R\setminus\{0\}$ and~$k,\ell\in \Z_{\geq 0}$.
\end{proposition}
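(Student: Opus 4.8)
The plan is to extract both assertions directly from the defining functional identity, using divisibility by $(y-1)$ (resp. $(x-1)$) for the degree bounds and the substitution $x=\tfrac{y}{y-1}$ to locate the roots of a univariate factor. For the degree bounds I would argue by contradiction. Writing $U=\sum_i U_i(y)\,x^i$ with $U_i\in R[y]$ and substituting gives
\[
(y-1)^r U\Big(\tfrac{y}{y-1},\,y\Big)=\sum_i U_i(y)\,y^i(y-1)^{r-i}.
\]
If $\deg_x U<r$, then each index satisfies $i\le\deg_x U<r$, so every exponent $r-i$ is strictly positive and the right-hand side is a genuine polynomial in $R[y]$ divisible by $(y-1)$. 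The defining identity forces it to equal $cy^n$, which is not divisible by $(y-1)$ because its value at $y=1$ is $c\neq 0$; this contradiction yields $\deg_x U\ge r$. Running the same argument on the equivalent identity $(x-1)^{n-r}U(x,\tfrac{x}{x-1})=cx^n$ gives $\deg_y U\ge n-r$.

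For the univariate factors, the guiding observation is that the change of variables $x=\tfrac{y}{y-1}$ matches $x=0$ with $y=0$ and $x=1$ with $y=\infty$, so along the hyperbola the only finite place where $cy^n$ vanishes corresponds to $x=0$. Concretely, suppose $R$ is a UFD with fraction field $k$ and that $p(x)\in R[x]$ divides $U$ in $R[x,y]$; write $U=p(x)W(x,y)$ and factor $p(x)=a\prod_i(x-\alpha_i)$ over $\overline{k}$, where $a\in R\setminus\{0\}$ is the leading coefficient. It suffices to show each $\alpha_i\in\{0,1\}$. If some root $\alpha:=\alpha_i$ satisfied $\alpha\notin\{0,1\}$, I would set $y_0:=\tfrac{\alpha}{\alpha-1}\in\overline{k}$, which satisfies $y_0\notin\{0,1\}$ and $\tfrac{y_0}{y_0-1}=\alpha$. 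Evaluating the identity $(y-1)^rU(\tfrac{y}{y-1},y)=cy^n$ in $\overline{k}(y)$ at $y=y_0$ (both sides are defined there since $y_0\neq1$, including when $r<0$) gives $(y_0-1)^r\,U(\alpha,y_0)=cy_0^n$. But $U(\alpha,y_0)=p(\alpha)\,W(\alpha,y_0)=0$, so $cy_0^n=0$, contradicting $c\neq 0$ and $y_0\neq 0$ in the field $\overline{k}$. Hence every root lies in $\{0,1\}$ and $p(x)=a\,x^{\ell}(x-1)^{k}$ for some $k,\ell\ge 0$. A $y$-factor $q(y)$ is handled identically after exchanging $x$ and $y$ and using the second form of the identity, producing $q(y)=b\,y^{\ell}(y-1)^{k}$.

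The bookkeeping of exponents in the degree bounds is routine. The points that need care are all in the factor argument: checking that the chosen $y_0$ avoids both the pole $y=1$ and the zero $y=0$ so that evaluating the rational-function identity at $y_0$ is legitimate, and verifying $\tfrac{y_0}{y_0-1}=\alpha$. I expect the main conceptual obstacle to be recognizing that the functional identity alone---tested at the single well-chosen point $y_0$ attached to each putative root---already rules out every root other than $0$ and $1$, with no finer information about $W$ or about the factorization of $U$ required.
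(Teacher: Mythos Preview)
Your proof is correct. The degree-bound half is essentially the paper's argument: both observe that if $\deg_x U<r$ then the substituted expression is a genuine polynomial divisible by $y-1$, contradicting $cy^n|_{y=1}=c\neq 0$ (the paper phrases this as the identity $\sum u_{i,j}(y-1)^{d-i}y^{i+j}=c(y-1)^{d-r}y^n$ forcing $d-r\ge 0$, which is the same observation after multiplying through by $(y-1)^{d-r}$).

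For the univariate-factor half you take a genuinely different route. The paper stays inside $R[x]$: from $U=V(x)U_0$ it writes $V(x)\cdot(x-1)^{n-r}U_0\bigl(x,\tfrac{x}{x-1}\bigr)=cx^n$, implicitly clears the $(x-1)$-denominator, and then invokes unique factorization in $R[x]$ to conclude that the irreducible factors of $V$ lie among $x$, $x-1$, and primes of $R$. You instead pass to $\overline{k}$ and kill each putative root $\alpha\notin\{0,1\}$ by evaluating the functional identity at the single point $y_0=\alpha/(\alpha-1)$. Your approach is pleasantly concrete and, as a bonus, does not actually use the UFD hypothesis---it only needs $R$ to be an integral domain so that it embeds in an algebraically closed field. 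The paper's approach, by contrast, avoids any field extension and makes the role of unique factorization explicit, which is closer in spirit to how the result is used downstream (Lemma~\ref{lem:product_Brylawski} and Theorem~\ref{th:MdMN}).
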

\begin{proof}
Let~$U(x,y)=\sum u_{i,j}x^iy^j$ and~$d\coloneqq \deg_x$. 
Since~$\sum_{i,j} u_{i,j}(y-1)^{d-i}y^{i+j}=c(y-1)^{d-r}y^n$ is a polynomial identity, the degree~$d$
satisfies~$d\geq r$. Moreover, if~$d>r$, then, by setting~$y=1$ in this identity, ~$\sum_{j}u_{d,j}=0$, while if~$d=r$ then~$\sum_{j}u_{d,j}=c$.   
Likewise, as the identity~$(y-1)^rU(\frac{y}{y-1},y)=cy^n$ is equivalent to the identity~$(x-1)^{n-r}U(x,\frac{x}{x-1})=cx^n$, a similar argument shows that~$\deg_y U \geq n-r$.

We turn to the second statement. If~$U(x,y)$ is a Brylawski polynomial divisible by a polynomial~$V(x)$ (independent of~$y$), then there exists~$U_0(x,y)\in R[x,y]$ such that
\[
(x-1)^{n-r}U(x,\tfrac x{x-1})=V(x)(x-1)^{n-r}U_0(x,\tfrac x{x-1})=cx^n.
\]
By unique factorization in~$R[x]$ the only potential irreducible factors of~$V(x)$ are~$x$ and~$x-1$.
We reach the analogous conclusion for factors of~$U(x,y)$ of type~$V(y)\in R[y]$ by using the identity~$(y-1)^rU(\frac{y}{y-1},y)=cy^n$.
\end{proof}

\begin{remark} We can say a little more in Proposition~\ref{prop:degree_bounds} when~$U(x,y)$ is the corank-nullity polynomial~$T_S(x,y)$ of a ranked set~$S=(E,\rr)$.  
As already mentioned,~$T_S$ is an~$(|E|,\rr(E))$-Brylawski polynomial (by Proposition~\ref{prop:TutteBrylawski}), and we have equality in~$\deg_x T_S\geq \rr(E)$ if and only if~$\rr(A)\geq 0$ for each~$A\subseteq E$, and equality in~$\deg_y T_S\geq |E|-\rr(E)$ if and only if~$\rr^*(A)\geq 0$ for each~$A\subseteq E$.

Moreover the only possible univariate factors of~$T_S(x,y)$ are of the form~$ax^\ell$ and~$by^\ell$. For suppose~$x-1$ divides~$T_S(x,y)$. Then 
 \[T_S(1,y)=\sum_{\stackrel{A\subseteq E}{\rr(A)=\rr(E)}}(y-1)^{|A|-\rr(E)}=0,\]
 which cannot hold as the sum contains leading term~$(y-1)^{|E|-\rr(E)}$, and so is monic as a polynomial in~$y$. 
 Dually, supposing~$y-1$ divides~$T_S(x,y)$ yields  
 \[T_S(x,1)=\sum_{\stackrel{A\subseteq E}{\rr(A)=|A|}}(x-1)^{\rr(E)-|A|}=0,\]
 and the sum is a monic polynomial in~$x$ (from the term contributed by~$A=\varnothing$). 
 
 When~$S=(E,\rr)$ is a matroid,  if~$x^\ell$ divides~$T_S(x,y)$ then ~$S$ is the direct sum of a smaller ranked set and~$S_1=(E_1,\rr_1)$, in which~$|E_1|=\ell$ and ~$\rr_1(A)=|A|$ for each~$A\subseteq E_1$ (i.e.,~$S_1$ is the matroid of~$\ell$ coloops, for which~$T_{S_1}(x,y)=x^\ell$); dually, 
 if~$y^\ell$ divides~$T_S(x,y)$ then~$S$ is the direct sum of a smaller ranked set and ~$S_1^*=(E_1,\rr_1^*)$, in which~$|E_1|=\ell$ and~$\rr_1^*(A)=0$ for each~$A\subseteq E_1$ (i.e.,~$S_1^*$ is the matroid of~$\ell$ loops, for which~$T_{S_1^*}(x,y)=y^\ell$).  
 
 \end{remark} 
The motivation for the name ``Brylawski polynomial" lies in the following key proposition and its consequence for corank-nullity polynomials of ranked sets (Corollary~\ref{cor:Brylawski}). 

\begin{proposition}[\cite{BCCP23}*{proof of Theorem 1.1}]\label{prop:Brylawski}
If~$U(x,y)$ is an~$(n,r)$-Brylawski polynomial with constant~$c$ 
then its coefficients~$u_{i,j}$ satisfy \footnote{We use the convention~$\binom{a}{b}=0$ if~$b<0$ or~$b>a$.}
\[\sum_{\stackrel{i,j}{i+j\leq h}} (-1)^{j}\binom{h-i}{j}u_{i,j}=c\cdot(-1)^{n-r}\binom{h-r}{h-n}, \quad\mbox{for any integer~$h\geq 0$}.\]
\end{proposition}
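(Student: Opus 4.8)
The plan is to establish the whole family of identities (one per integer~$h\ge 0$) at once, by encoding them in a single generating-function identity in an auxiliary variable~$t$. Set
\[
F(t)\coloneqq\sum_{h\ge 0}\Bigg(\sum_{\substack{i,j\\ i+j\le h}}(-1)^j\binom{h-i}{j}u_{i,j}\Bigg)t^h
\quad\text{and}\quad
G(t)\coloneqq\sum_{h\ge 0}c\,(-1)^{n-r}\binom{h-r}{h-n}t^h,
\]
viewed as formal power series (equivalently rational functions) over~$R$ or its field of fractions; the constraint~$i+j\le h$ is automatic from the convention on binomial coefficients. Because~$U$ has only finitely many nonzero coefficients~$u_{i,j}$, every interchange of summations below has a finite outer sum and causes no convergence issue, so proving~$F=G$ is exactly equivalent to proving the stated identity for all~$h$.

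First I would put~$F$ in closed form. Swapping the two sums and using the elementary series identity~$\sum_{h\ge 0}\binom{h-i}{j}t^h=t^{i+j}/(1-t)^{j+1}$ gives
\[
F(t)=\sum_{i,j}(-1)^j u_{i,j}\,\frac{t^{i+j}}{(1-t)^{j+1}}
=\frac{1}{1-t}\sum_{i,j}u_{i,j}\,t^i\Big(\frac{-t}{1-t}\Big)^j
=\frac{1}{1-t}\,U\!\Big(t,\frac{-t}{1-t}\Big).
\]
The crucial observation is that~$\frac{-t}{1-t}=\frac{t}{t-1}$, so the substitution inside~$U$ is precisely the one appearing in the second (equivalent) form of the Brylawski identity in Definition~\ref{def:BryPol}.

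Applying that identity~$(x-1)^{n-r}U(x,\tfrac{x}{x-1})=cx^n$ with~$x=t$ yields~$U(t,\tfrac{t}{t-1})=ct^n/(t-1)^{n-r}$, whence, using~$(t-1)^{n-r}=(-1)^{n-r}(1-t)^{n-r}$,
\[
F(t)=\frac{1}{1-t}\cdot\frac{ct^n}{(t-1)^{n-r}}=(-1)^{n-r}\,\frac{c\,t^n}{(1-t)^{n-r+1}}.
\]
It then remains only to verify that this agrees with~$G(t)$ coefficient by coefficient, i.e.\ that the coefficient of~$t^h$ in~$t^n/(1-t)^{n-r+1}$ is~$\binom{h-r}{h-n}$. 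For~$r\le n$ this is the negative-binomial expansion, giving~$\binom{(n-r)+(h-n)}{n-r}=\binom{h-r}{h-n}$; the case~$r>n$ (allowed in Definition~\ref{def:BryPol}, where~$n-r+1\le 0$) reduces to the finite expansion of~$(1-t)^{r-n-1}$ together with the upper-negation identity~$\binom{m}{k}=(-1)^k\binom{k-m-1}{k}$. Comparing coefficients of~$t^h$ then delivers the proposition for every~$h\ge 0$.

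I expect the only delicate part to be this last bookkeeping with binomial-coefficient conventions, so that the two closed forms are correct uniformly in the integer~$r$ (recall both~$r<0$ and~$r>n$ occur); once the identity~$\frac{-t}{1-t}=\frac{t}{t-1}$ is noticed, the Brylawski hyperbola does all the substantive work and nothing beyond the geometric/negative-binomial series is required. (Alternatively, one may multiply the Brylawski relation~$(y-1)^rU(\tfrac{y}{y-1},y)=cy^n$ by~$(y-1)^{h-r}$ and read off the coefficient of~$y^h$; for~$h\ge\deg_x U$ this is a genuine polynomial coefficient and recovers the~$h$-th relation at once. The generating-function formulation has the advantage of treating all~$h$ uniformly and of exhibiting the role of the hyperbola~$(x-1)(y-1)=1$.)
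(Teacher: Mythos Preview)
Your argument is correct and follows essentially the same route as the proof of Beke--Cs\'aji--Csikv\'ari--Pituk that the paper cites: both rest on the simplification of~$U$ along the hyperbola~$(x-1)(y-1)=1$ (the defining relation in Definition~\ref{def:BryPol}) and then extract coefficients. Your generating-function packaging in the auxiliary variable~$t$ is just a tidy way to carry out that extraction for all~$h\ge 0$ at once, and you even record the more direct coefficient-reading variant---closer to the original---in your closing parenthetical.
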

In particular,~$u_{0,0}=0$ if~$n>0$, and~$u_{1,0}=u_{0,1}$ if~$n>1$. 
Proposition~\ref{prop:Brylawski} contains the following special case.
\begin{corollary}\label{cor:Brylawski}
Let~$S=(E,\rr)$ be a ranked set with~$|E|=n$,~$\rr(E)=r$, and  
$T_S(x,y)=\sum_{i,j}t_{i,j}x^iy^j$. 
Then, for any integer~$h\geq 0$,
\begin{equation}\label{Brylawski}\sum_{\stackrel{i,j}{i+j\leq h}} (-1)^{j}\binom{h-i}{j}t_{i,j}=(-1)^{n-r}\binom{h-r}{h-n}.\end{equation}
\end{corollary}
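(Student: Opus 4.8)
The statement is exactly the specialization of Proposition~\ref{prop:Brylawski} to the Brylawski polynomial $U=T_S$. Indeed, by Proposition~\ref{prop:TutteBrylawski} the corank-nullity polynomial $T_S(x,y)$ is an $(|E|,\rr(E))$-Brylawski polynomial with constant $c=1$, so substituting $c=1$, $n=|E|$ and $r=\rr(E)$ into the relation of Proposition~\ref{prop:Brylawski} yields~\eqref{Brylawski} at once. The quickest route is thus simply to invoke Proposition~\ref{prop:Brylawski}. Since that proposition is borrowed from~\cite{BCCP23}, however, I would prefer to record a short self-contained derivation straight from the hyperbola identity of Proposition~\ref{prop:TutteBrylawski}.

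The plan is a generating-function computation. Using the convention on binomial coefficients together with the identity $\binom{h-i}{j}=[t^{h-i-j}](1-t)^{-(j+1)}$, I would recognize the left-hand side of~\eqref{Brylawski} as a single coefficient extraction:
\[
\begin{aligned}
\sum_{i+j\le h}(-1)^j\binom{h-i}{j}t_{i,j}
&=[t^h]\,\frac{1}{1-t}\sum_{i,j}t_{i,j}\,t^i\Bigl(\frac{-t}{1-t}\Bigr)^j\\
&=[t^h]\,\frac{1}{1-t}\,T_S\Bigl(t,\tfrac{t}{t-1}\Bigr),
\end{aligned}
\]
the key point being that the second argument $\frac{-t}{1-t}$ equals $\frac{t}{t-1}$, so that the pair $(t,\frac{t}{t-1})$ lies on the hyperbola $(x-1)(y-1)=1$.

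I would then feed in Proposition~\ref{prop:TutteBrylawski} in the equivalent form $(x-1)^{n-r}T_S(x,\frac{x}{x-1})=x^{n}$, which (after dividing by the unit $(x-1)^{n-r}$ in the power series ring, legitimate since its constant term is $(-1)^{n-r}\neq 0$) replaces $T_S(t,\frac{t}{t-1})$ by $t^{n}/(t-1)^{n-r}$. Using $(t-1)^{n-r}=(-1)^{n-r}(1-t)^{n-r}$ and the expansion $(1-t)^{-(n-r+1)}=\sum_{m\ge 0}\binom{m+n-r}{n-r}t^m$, the computation collapses to
\[
\begin{aligned}
[t^h]\,\frac{t^{n}}{(1-t)(t-1)^{n-r}}
&=(-1)^{n-r}\,[t^{h-n}]\,(1-t)^{-(n-r+1)}\\
&=(-1)^{n-r}\binom{h-r}{n-r},
\end{aligned}
\]
and the symmetry $\binom{h-r}{n-r}=\binom{h-r}{h-n}$ gives exactly the right-hand side of~\eqref{Brylawski}.

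The argument is essentially bookkeeping, so there is no serious obstacle; the points to watch are that the formal power series $(1-t)^{-(n-r+1)}$ is legitimate precisely because $n-r=|E|-\rr(E)\ge 0$ for a ranked set (ensuring $n-r+1\ge 1$), that the binomial convention makes the constraint $i+j\le h$ automatic (since $\binom{h-i}{j}=0$ once $j>h-i$), and that the interchanges of summation used to pass to a single coefficient extraction are valid because in each fixed $t$-degree only finitely many terms contribute.
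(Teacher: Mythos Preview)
Your proposal is correct, and its first paragraph is exactly the paper's approach: the paper derives Corollary~\ref{cor:Brylawski} simply by noting that Proposition~\ref{prop:TutteBrylawski} makes $T_S$ an $(|E|,\rr(E))$-Brylawski polynomial with constant $c=1$, so Proposition~\ref{prop:Brylawski} specializes immediately. Your additional generating-function computation is a valid self-contained derivation (essentially reconstructing the argument of~\cite{BCCP23} behind Proposition~\ref{prop:Brylawski}) and is more than the paper itself provides.
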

The linear relations given by Corollary~\ref{cor:Brylawski} for~$0\leq h<n$ were established by Brylawski~\cite{B72} for matroid rank functions, and extended to greedoid and antimatroid rank functions by Gordon~\cite{gordon15}, who also established the affine relation for~$h=n$. 
Although the proof of Corollary~\ref{cor:Brylawski} given by Beke\emph{~et al.}~\cite{BCCP23} assumes that~$\rr$ takes non-negative values, it is easily extended to include the case of rank functions taking negative values as well. 
In fact~\cite{BCCP23}*{Th.~1.1} is really Corollary~\ref{cor:Brylawski} above once extended from~$\N$- to~$\Z$-valued rank functions: 
Beke\emph{~et al.} do not explicitly state the generalization to Brylawski polynomials, although their argument depends only on the property defining a Brylawski polynomial (Definition~\ref{def:BryPol}) and not on being the Tutte polynomial of a rank function. 

A constant not equal to 1 in Definition~\ref{def:BryPol} of an~$(n,r)$-Brylawski polynomial features in~\cite{BCCP24}*{Lemma 7.6, Theorem 8.2}. For our purposes, allowing an arbitrary constant serves as a technical convenience (and we do not usually need to specify the constant) enabling one to prove stability properties of the class of Brylawski polynomials. While it is straightforward to see that the product of two Brylawski polynomials is again a Brylawski polynomial, the following lemma asserts that the converse holds over any UFD. 

\begin{lemma}\label{lem:product_Brylawski} Let~$R$ be a UFD. 
Suppose that~$T(x,y)$ is an~$(n,r)$-Brylawski polynomial in~$R[x,y]$
 with factorization ~$T(x,y)=U(x,y)V(x,y)$ in~$R[x,y]$.
Then there are integers~$m,s$ such that~$U(x,y)$ is an~$(n-m,r-s)$-Brylawski polynomial and~$V(x,y)$ is an~$(m,s)$-Brylawski polynomial, where~$0\leq m\leq n$.
\end{lemma}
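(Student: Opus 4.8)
The plan is to exploit the defining functional identity of Brylawski polynomials directly. Recall that $T(x,y)$ being an $(n,r)$-Brylawski polynomial means $(y-1)^rT(\frac{y}{y-1},y)=cy^n$ for some non-zero constant $c\in R$. Since $T=UV$, this factors as
\[
\Bigl[(y-1)^{s'}U\bigl(\tfrac{y}{y-1},y\bigr)\Bigr]\cdot\Bigl[(y-1)^{r-s'}V\bigl(\tfrac{y}{y-1},y\bigr)\Bigr]=cy^n,
\]
where $s'$ is any integer we are free to allocate to the two factors. The strategy is to pass to the ring $R[y,(y-1)^{-1}]$ (the localization inverting $y-1$), substitute $x=\frac{y}{y-1}$, and read off from the identity that the two bracketed factors must each be, up to units, powers of $y$. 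First I would make this substitution rigorous: the map $x\mapsto\frac{y}{y-1}$ induces a ring homomorphism $R[x,y]\to R[y,(y-1)^{-1}]$, and under it $T$ maps to $cy^n(y-1)^{-r}$, a unit times $y^n$ in the localization.

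Next I would argue the factorization of this image in $R[y,(y-1)^{-1}]$. The key point is that $R[y,(y-1)^{-1}]$, being a localization of the UFD $R[y]$, is itself a UFD whose primes are the primes of $R$, the prime $y$, and the irreducible polynomials of $R[y]$ other than (associates of) $y-1$. Writing $\widetilde U$ and $\widetilde V$ for the images of $U$ and $V$, we have $\widetilde U\,\widetilde V=cy^n(y-1)^{-r}$, a unit times a power of $y$. By unique factorization, each of $\widetilde U,\widetilde V$ must itself be a unit times a power of $y$; that is, there exist integers $m\geq 0$ and a unit $u$ with $\widetilde V=u\,y^{m}(y-1)^{j}$ for some $j\in\Z$, and correspondingly $\widetilde U=(c/u)\,y^{n-m}(y-1)^{-r-j}$. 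Clearing the denominators by choosing $s$ so that the exponent of $(y-1)$ works out, one obtains identities of the shape $(y-1)^{s}V(\frac{y}{y-1},y)=c''y^{m}$ and $(y-1)^{r-s}U(\frac{y}{y-1},y)=c'y^{n-m}$, which is exactly the assertion that $V$ is an $(m,s)$-Brylawski polynomial and $U$ an $(n-m,r-s)$-Brylawski polynomial.

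The main obstacle, and the step requiring care, is controlling the unit $u$ and the auxiliary exponent $j$ so as to guarantee the correct \emph{form} of the constants $c',c''$ (genuine non-zero elements of $R$, not merely of the fraction field) and, in particular, verifying $c'c''=c$ and that both lie in $R\setminus\{0\}$. The substitution $x\mapsto\frac{y}{y-1}$ could a priori introduce spurious factors of $(y-1)$ in the denominators of $\widetilde U$ or $\widetilde V$ individually even though they cancel in the product; I would track these by comparing $\deg_y$ against the degree of the substituted polynomial, using Proposition~\ref{prop:degree_bounds} to pin down the admissible range. Finally, the bound $0\leq m\leq n$ follows because $m$ is a non-negative integer (a power of $y$) and $n-m$ must likewise be non-negative, since $\widetilde U$ is a genuine element of $R[y,(y-1)^{-1}]$ with a non-negative power of $y$; I would verify this by noting that $y^{n-m}$ appears with non-negative exponent in the factorization of $cy^n(y-1)^{-r}$.
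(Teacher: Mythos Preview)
Your approach is essentially the paper's, just repackaged: both substitute $x\mapsto \frac{y}{y-1}$ and use unique factorization to conclude that each factor is a constant times a power of $y$ times a power of $(y-1)$. The paper stays in $R[y]$ and chooses $s$ to be the \emph{minimal} integer making $(y-1)^sV(\tfrac{y}{y-1},y)$ a genuine polynomial, then argues by contradiction that $(y-1)^{r-s}U(\tfrac{y}{y-1},y)$ is also a polynomial; you instead localize at $(y-1)$ and invoke the UFD property of $R[y,(y-1)^{-1}]$. Either route works and neither is materially shorter.

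One slip to fix: you write that ``each of $\widetilde U,\widetilde V$ must itself be a unit times a power of $y$''. This is only true if $c$ is a unit of $R$, which is not assumed. The primes of $R[y,(y-1)^{-1}]$ dividing $cy^n$ are $y$ together with the prime factors of $c$ in $R$; so the correct conclusion is $\widetilde V=b\,y^m(y-1)^j$ with $b\in R\setminus\{0\}$ a divisor of $c$ (up to a unit of $R$), $m\geq 0$, $j\in\Z$. Once you state it this way, your later worries evaporate: the Brylawski constant $c''$ is exactly this $b\in R\setminus\{0\}$, the exponent $s=-j$, and there is no risk of landing in the fraction field. In particular you do not need Proposition~\ref{prop:degree_bounds} or any degree comparison---the localization already handles the ``spurious factors of $(y-1)$'' automatically, since $(y-1)$ is a unit there. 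The bound $0\leq m\leq n$ follows as you say: both $\widetilde U$ and $\widetilde V$ lie in $R[y,(y-1)^{-1}]$, where $y$ is a non-unit prime, so the exponents $m$ and $n-m$ in their factorizations are non-negative.
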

\begin{proof}
For simplicity write~$T(x,y)=T(x)$, meaning that it is considered as a polynomial in~$x$ with coefficients in~$R[y]$. Likewise, write
 the given
factorization of~$T$ over~$R[y]$ as~$T(x)=U(x)V(x)$.
As~$T$ is an~$(n,r)$-Brylawski polynomial,
there is non-zero~$c\in R$ such that
\begin{equation}\label{eq:product}cy^n=(y-1)^rT\big(\frac{y}{y-1}\big)=(y-1)^{r-s}U\big(\frac{y}{y-1}\big)\cdot (y-1)^sV\big(\frac{y}{y-1}\big),\end{equation}
where~$s$ is chosen to be the minimum integer such that~$(y-1)^sV(\frac{y}{y-1})$ is a polynomial in~$y$. This then forces~$(y-1)^{r-s}U(\frac{y}{y-1})$ to be a polynomial in~$y$, for otherwise it is a ratio of a polynomial in~$y$ and a power of~$y-1$, and this implies that~$y-1$ divides~$(y-1)^{s}V(\frac{y}{y-1})$ as the product on the right-hand side of~\eqref{eq:product} is the polynomial~$cy^n$ on the left-hand side, which contradicts minimality of~$s$.

By unique factorization in~$R[y]$, this implies that
\[(y-1)^sV\big(\frac{y}{y-1}\big)=b y^m, \quad\mbox{ and }(y-1)^{r-s}U\big(\frac{y}{y-1}\big)=a y^{n-m},\]
for some~$a,b\in R$ such that~$ab=c$ and some integer~$m$ satisfying~$0\leq m\leq n$.
\end{proof}

\subsection{Irreducibility of Brylawski polynomials}\label{sec:irreducibility}
The main result of this section is the following.
\begin{theorem}\label{th:MdMN}
Let~$R$ be a UFD and let ~$T(x,y)=\sum_{i,j}t_{i,j}x^iy^j\in R[x,y]$ be an~$(n,r)$-Brylawski polynomial, where~$n,r$ are integers with~$r\geq 1$ and~$n-r\geq 1$. Suppose 
 that {\rm (i)} neither~$x-1$ nor~$y-1$ divide~$T(x, y)$; 
 {\rm (ii)}~$t_{1,0}\neq 0$ and {\rm (iii)}~$\deg_x T+\deg_y T\in\{n, n+1\}$.
If~$T(x,y)=U(x,y)V(x,y)$ in~$R[x,y]$ then~$U(x,y)$ or~$V(x,y)$ is constant (in~$R$). In particular~$T(x,y)$ is irreducible in~$k[x,y]$ where~$k$ is the fraction field of~$R$; moreover if~$\mathrm{gcd}_{i,j}\{t_{i,j}\}=1$ then~$T(x,y)$ is irreducible in~$R[x,y]$.
\end{theorem}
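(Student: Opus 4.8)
The plan is to assume a factorization $T(x,y)=U(x,y)V(x,y)$ in $R[x,y]$ with both $U$ and $V$ nonconstant and derive a contradiction; this gives the first assertion, and the two irreducibility statements then follow by Gauss's lemma. First I would invoke Lemma~\ref{lem:product_Brylawski} to write $U$ as an $(n-m,r-s)$-Brylawski polynomial and $V$ as an $(m,s)$-Brylawski polynomial for integers $m,s$ with $0\le m\le n$. Since $R[y]$ is an integral domain, the leading coefficients in $x$ (resp.\ $y$) do not cancel in the product, so $\deg_x T=\deg_x U+\deg_x V$ and $\deg_y T=\deg_y U+\deg_y V$. Throughout I use that $n\ge 2$ (from $r\ge 1$ and $n-r\ge 1$), so that Proposition~\ref{prop:Brylawski} gives $t_{0,0}=0$ and $t_{1,0}=t_{0,1}$.

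Next I would dispose of the case where one factor, say $U$, is univariate. As $U$ divides the Brylawski polynomial $T$, Proposition~\ref{prop:degree_bounds} forces $U$ to have the shape $a(x-1)^k x^\ell$ or $b(y-1)^k y^\ell$; hypothesis~(i) removes the $(x-1)$ and $(y-1)$ factors, leaving $U=ax^\ell$ or $U=by^\ell$ with $\ell\ge 1$ (as $U$ is nonconstant). In the first case $x\mid T$, so $t_{0,1}=0$ and hence $t_{1,0}=t_{0,1}=0$; in the second case $y\mid T$, so $t_{1,0}=0$ directly. Either way hypothesis~(ii) is violated. Thus both $U$ and $V$ are genuinely bivariate, meaning $\deg_x U,\deg_y U,\deg_x V,\deg_y V\ge 1$.

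The heart of the argument, and the step I expect to be the main obstacle, is to combine the constant terms with the sharp degree constraint~(iii). Since $t_{0,0}=u_{0,0}v_{0,0}=0$ and $R$ is a domain, one of $u_{0,0},v_{0,0}$ vanishes; they cannot both vanish, for then $t_{1,0}=u_{1,0}v_{0,0}+u_{0,0}v_{1,0}=0$ would contradict~(ii). After possibly swapping $U$ and $V$, assume $v_{0,0}\ne 0$. Applying Proposition~\ref{prop:Brylawski} to the $(m,s)$-Brylawski polynomial $V$ forces $m=0$, so that $U$ is an $(n,r-s)$-Brylawski polynomial. Proposition~\ref{prop:degree_bounds} applied to $U$ gives $\deg_x U\ge r-s$ and $\deg_y U\ge n-(r-s)$, whence $\deg_x U+\deg_y U\ge n$; meanwhile $\deg_x V+\deg_y V\ge 2$ because $V$ is bivariate. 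Adding, $\deg_x T+\deg_y T=(\deg_x U+\deg_y U)+(\deg_x V+\deg_y V)\ge n+2$, contradicting~(iii). The crux is precisely this squeeze: hypothesis~(iii) is exactly strong enough that a factor with $m=0$ cannot also depend on both variables.

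Finally I would record the two irreducibility consequences. Over the fraction field $k$, if $T=PQ$ with $P,Q\in k[x,y]$ nonconstant, then Gauss's lemma lets me write $P=\lambda P_0$ and $Q=\mu Q_0$ with $P_0,Q_0\in R[x,y]$ primitive; since $P_0Q_0$ is primitive and $T\in R[x,y]$, the content computation gives $\lambda\mu\in R$, producing a nontrivial factorization $T=(\lambda\mu P_0)Q_0$ in $R[x,y]$ that the main claim excludes, so $T$ is irreducible in $k[x,y]$. Over $R[x,y]$, if moreover $\gcd_{i,j}\{t_{i,j}\}=1$, then in any factorization $T=UV$ the constant factor, say $U=a\in R$, divides every $t_{i,j}$ and hence divides their gcd $1$, so $a$ is a unit and the factorization is trivial; thus $T$ is irreducible in $R[x,y]$.
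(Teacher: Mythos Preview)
Your proof is correct and uses the same ingredients as the paper (Lemma~\ref{lem:product_Brylawski}, Propositions~\ref{prop:degree_bounds} and~\ref{prop:Brylawski}, and the elimination of univariate factors via~(i) and~(ii)), but the core argument runs in the opposite direction. The paper first uses the degree constraint~(iii) together with the bounds from Proposition~\ref{prop:degree_bounds} to force $1\le m\le n-1$, so that \emph{both} $u_{0,0}$ and $v_{0,0}$ vanish by the first Brylawski relation, whence $t_{1,0}=u_{0,0}v_{1,0}+u_{1,0}v_{0,0}=0$ contradicts~(ii). You instead start from~(ii): since $t_{1,0}\ne0$ the constant terms $u_{0,0},v_{0,0}$ cannot both vanish, so one factor has first Brylawski parameter equal to~$0$, forcing the other factor to carry the full parameter~$n$ and hence to have $\deg_x+\deg_y\ge n$; adding the remaining bivariate factor's contribution of at least~$2$ then contradicts~(iii). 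The two routes are essentially contrapositives of one another; yours makes the sharpness of~(iii) transparent (a genuinely bivariate factor with Brylawski parameter~$0$ costs at least~$2$ in total degree), while the paper's avoids the case split on which constant term survives.
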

\noindent 
By Proposition~\ref{prop:degree_bounds}, condition (iii) is equivalent to~$\deg_x T+\deg_y T\leq n+1$.

\begin{proof}[Proof of Theorem~\ref{th:MdMN}]
Suppose that~$T(x,y)$ is an~$(n,r)$-Brylawski polynomial and~$T(x,y)=U(x,y)V(x,y)$, where~$U, V$ are not constants (in~$R$).  By assumption~(i), and the fact that under assumption~(ii)
neither~$x$ nor~$y$ divide~$T(x,y)$ (noting that~$t_{0,1}=t_{1,0}$ by Proposition~\ref{prop:Brylawski} as~$n\geq 2$), Proposition~\ref{prop:degree_bounds} implies neither~$U(x,y)$ nor~$V(x,y)$ is univariate. By Lemma~\ref{lem:product_Brylawski} there are integers~$m,s$ such that~$U(x,y)$ is an~$(n-m,r-s)$-Brylawski polynomial and~$V(x,y)$ is an~$(m,s)$-Brylawski polynomial, where~$0\leq m\leq n$.
We also have~$\deg_x U+\deg_x V = \deg_x T$ and~$\deg_y U + \deg_y V= \deg_y T$.  
By definition, there is~$c\in R$ such that  
\[(y-1)^r T(\frac{y}{y-1},y)=cy^n.\]
We may assume
that~$0<\deg_x U, \deg_x V<\deg_x T$ and~$0<\deg_y U, \deg_y V<\deg_y T$. For otherwise~$T(x,y)$ admits a univariate factor in~$R[x,y]$, and we reach a contradiction. 

As~$U(x,y)$ is an~$(n-m, r-s)$-Brylawski polynomial and~$V(x,y)$ is a~$(m,s)$-Brylawski polynomial 
where~$0\leq m\leq n$,
by Proposition~\ref{prop:degree_bounds} we have
$s\leq \deg_x V<\deg_x T$ and~$m-s\leq\deg_y V<\deg_y T$. These inequalities imply that~$m\leq \deg_x T+\deg_y T-2\leq n-1$.
Similarly,  
$r-s\leq \deg_x U<\deg_x T$ and~$n-m-(r-s)\leq \deg_y U<\deg_y T$
imply that~$n-m\leq \deg_x T+\deg_y T-2\leq n-1$.
Hence~$m\geq 1$ and~$n-m\geq 1$, and the first of Brylawski's relations gives~$v_{0,0}=0=u_{0,0}$.  

 But then~$t_{1,0}=u_{0,0}v_{1,0}+u_{1,0}v_{0,0}=0$, contrary to assumption~(ii). 
\end{proof}

When~$T=T_S$ is the corank-nullity polynomial of a ranked set~$S=(E,\rr)$ in which both~$\rr$ and its dual~$\rr^*$ take nonnegative values (which holds for rank functions of matroids, but not necessarily of greedoids~\cite{Gordon12}), 
we have~$\deg_x T_S=\rr (E)$ and~$\deg_y T_S=|E|-\rr(E)$ and so, with~$\deg_x T_S+\deg_y T_S=|E|$, condition~(iii) in Theorem~\ref{th:MdMN} holds for the~$(|E|,\rr(E))$-Brylawski polynomial~$T_S(x,y)$. But if either~$\rr$ or~$\rr^*$ take negative values, then condition~(iii) fails for~$T_S$ unless only one of these rank functions takes negative values and moreover the only negative value this one takes is~$-1$.
We therefore deduce the following particular case of Theorem~\ref{th:MdMN}, first shown by Merino, de Mier and Noy~\cite{MdMN}.

\begin{corollary}\label{cor:dMMN}
Let~$M$ be a connected matroid on groundset~$E$ of size~$\geq 2$ and with Tutte polynomial~$T_M(x,y)=\sum_{i,j}t_{i,j}x^iy^j$. Let~$R$ be any UFD and let~$\varphi\colon \Z\to R$ be the natural ring homomorphism whose kernel is generated by the characteristic~$\kappa$ of~$R$. Assuming that~$\kappa\nmid t_{1,0}$, then~$T_M$, seen as an element of~$R[x,y]$ by applying~$\varphi$ to its coefficients, is irreducible.
\end{corollary}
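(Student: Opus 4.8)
The plan is to deduce the statement directly from Theorem~\ref{th:MdMN} by verifying that the reduction $T_M^R\coloneqq\varphi(T_M)$ (the coefficientwise image of $T_M$ in $R[x,y]$) satisfies all the hypotheses of that theorem. Write $n=|E|$ and $r=\rr(E)$. By Proposition~\ref{prop:TutteBrylawski}, $T_M$ is an $(n,r)$-Brylawski polynomial over $\Z$ with constant $c=1$; applying the ring homomorphism $\varphi$ to the defining identity $(y-1)^{r}T_M(\frac{y}{y-1},y)=y^{n}$ shows that $T_M^R$ is again an $(n,r)$-Brylawski polynomial, now with constant $\varphi(1)=1$, which is nonzero since $R$ is a domain.

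Next I would pin down the numerical hypotheses. Since $M$ is connected with $|E|\geq 2$, it has no loops and no coloops, whence $1\leq r\leq n-1$; this gives both $r\geq 1$ and $n-r\geq 1$ as required. For condition~(iii), recall from the remark following Proposition~\ref{prop:degree_bounds} that over $\Z$ one has $\deg_x T_M=r$ and $\deg_y T_M=n-r$, because a matroid rank function and its dual are nonnegative. Reduction modulo $\kappa$ cannot raise a degree, so $\deg_x T_M^R\leq r$ and $\deg_y T_M^R\leq n-r$; combined with the lower bounds $\deg_x T_M^R\geq r$ and $\deg_y T_M^R\geq n-r$ furnished by Proposition~\ref{prop:degree_bounds} applied to the $(n,r)$-Brylawski polynomial $T_M^R$, these are equalities, so $\deg_x T_M^R+\deg_y T_M^R=n$ and condition~(iii) holds.

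It then remains to check conditions~(i) and~(ii) over $R$. For~(ii), Crapo's theorem ensures that the beta invariant $t_{1,0}$ is a nonzero integer precisely because $M$ is connected of size at least $2$, and the hypothesis $\kappa\nmid t_{1,0}$ says exactly that $\varphi(t_{1,0})\neq 0$ in $R$. For~(i), the remark following Proposition~\ref{prop:degree_bounds} exhibits $T_M(1,y)$ and $T_M(x,1)$ as monic polynomials in $y$ and in $x$ respectively (the leading terms coming from $A=E$ and from $A=\varnothing$); since their leading coefficients equal $1$, their images $T_M^R(1,y)$ and $T_M^R(x,1)$ remain monic, hence nonzero, so neither $x-1$ nor $y-1$ divides $T_M^R$. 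With (i)--(iii) verified, Theorem~\ref{th:MdMN} yields that $T_M^R$ is irreducible in $k[x,y]$; and since $M$ is loopless the term $A=\varnothing$ contributes the coefficient $t_{r,0}=1$, whose image $1\in R$ is a unit, so $\gcd_{i,j}\{\varphi(t_{i,j})\}=1$ and the moreover clause gives irreducibility in $R[x,y]$. The only real care needed --- the main (if modest) obstacle --- is that reducing coefficients modulo $\kappa$ preserves conditions~(i) and~(iii): this is exactly why one tracks the extreme monomials $A=\varnothing$ and $A=E$, whose coefficients equal $1$ and therefore survive in any characteristic.
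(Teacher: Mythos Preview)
Your proof is correct and follows essentially the same approach as the paper: both deduce the corollary directly from Theorem~\ref{th:MdMN} by checking its hypotheses for the Tutte polynomial of a connected matroid. The paper's own argument is very terse (it only remarks that $t_{1,0}=t_{0,1}\neq 0$ forces $r\geq 1$ and $n-r\geq 1$, relying on earlier discussion for conditions~(i) and~(iii)), whereas you spell out each verification explicitly---including the survival of the monic leading terms under reduction modulo~$\kappa$ and the gcd condition via $t_{r,0}=1$---which the paper leaves implicit.
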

For a connected matroid~$S=(E,\rr)$ with~$|E|\geq 2$, we have~$t_{1,0}=t_{0,1}\neq 0$, ensuring that~$r=\rr(E)\geq 1$ and~$n-r=|E|-\rr(E)\geq 1$ as required for Theorem~\ref{th:MdMN}.

Corollary~\ref{cor:dMMN} extends to delta matroids in which the rank function~$\rr$ used in the definition of the Tutte polynomial is the function~$\sigma$ defined in~\cite{EMGMNV22}*{p.~1341, eq.~(3)} (obtained by averaging the ranks of two associated matroids).
By Theorem~4.7 of the paper just cited, the coefficient~$t_{1,0}$ is non-zero if and only if the delta matroid is connected; that paper's Theorem~1.2 is then a consequence of the general phenomenon recorded in our Theorem~\ref{th:MdMN}.     

\begin{remark}\label{rem:exGordon}
Assumptions~(ii) and~(iii) in Theorem~\ref{th:MdMN} are not necessary conditions for irreducibility. (As already mentioned, for matroids  assumption~(ii) is in fact necessary as well as sufficient for connectivity, and accordingly necessary and sufficient for irreducibility~\cite{MdMN}.)

The polynomial~$x^3+2x^2+y^2+3xy$ (see~\cite{gordon15}*{p.~23}) is a~$(5,3)$-Brylawski polynomial, equal to the corank-nullity polynomial  \[T_S(x,y)=(x-1)^3+5(x-1)^2+10(x-1)+7+3(x-1)(y-1)+5(y-1)+(y-1)^2,\]
where~$S=(E,\rr)$, in which~$|E|=5$ and~$\rr$ is defined by
$\rr(\varnothing)=0$;~$\rr(A)=1$ for each~$A$ of size 1;~$\rr(A)=2$ for each~$A$ of size 2;~$\rr(A)=3$ for 7 subsets~$A$ of size 3, and~$\rr(A)=2$ for the remaining 3 subsets of size 3;~$\rr(A)=3$ for each~$A$ of size 4; and~$\rr(E)=3$.
The polynomial~$T_S(x,y)$ shares with the Tutte polynomial of a  matroid the properties of being monic as a polynomial in~$x$ (considering~$T_S(x,y)$ over~$\Z[y]$) and monic as a polynomial in~$y$ (considering~$T_S(x,y)$ over~$\Z[x]$).
This polynomial is irreducible (as a quadratic in~$y$, its discriminant is~$x^2(1-4x)$) but does not satisfy assumption~(ii) of Theorem~\ref{th:MdMN}.
Irreducibility of~$T_S(x,y)$   implies the ranked set~$S$ cannot be expressed as the direct sum of smaller ranked~sets.\footnote{Gordon~\cite{Gordon97} exhibits in his Example~2.1 a greedoid that is not decomposable as a direct sum of smaller greedoids and yet has~$t_{1,0}=0$.
Let~$E=\{a,b,c\}$ and~$\mathcal{F}=\{\varnothing, \{a\}, \{b\}, \{a,c\}, \{b,c\}, \{a,b,c\}\}$. The greedoid rank function is defined for~$A\subseteq E$ by \[\rr(A)=\max\{|F|:F\in\mathcal F, F\subseteq A\}.\]
Then~$S=(E,\rr)$ has corank-nullity polynomial
\begin{align*}T_S(x,y)&= (x\!-\!1)^3+2(x\!-\!1)^2+(x\!-\!1)^3(y\!-\!1)+2(x\!-\!1)+(x\!-\!1)^2(y\!-\!1)+\!1\\
& = x[x+(x-1)^2y]\end{align*}
The first factor is the Tutte polynomial of a single isthmus, i.e.\@ of the ranked set~$S_1=(E_1,\rr_1)$, where~$E_1=\{c\}$ and~$\rr_1$ is defined by 
$\rr_1(\varnothing)=0, \rr(\{c\})=1$. 
While the second factor is not the Tutte polynomial of a greedoid, it is equal to 
\[(x-1)^2(y-1)+(x-1)^2+x-1+1\]
which is the corank-nullity polynomial of~$S_2=(E_2, \rr_2)$ in which~$E_2=\{a,b\}$ and~$\rr_2$ is defined by 
\[\rr_2(\varnothing)=0, \: \rr_2(\{a\})=0,\: \rr_2(\{b\})=1, \: \rr_2(\{a,b\})=2.\]
Not only do we have then 
\[T_S(x,y)=T_{S_1}(x,y)T_{S_2}(x,y),\]
but~$S=S_1\oplus S_2$ even though as a greedoid~$S$ cannot be expressed as a sum of smaller greedoids.}

Assumption~(iii) --  while, as already noted, automatic for corank-nullity polynomials of matroids -- is not necessary either, as the following example shows.  
We define the ranked set~$S=(E,\rr)$ for integers~$a>b>0$, in which~$|E|\geq b$, by setting
\[\rr(A)=\begin{cases} 0 & A=\varnothing,\\
|A|-a & \varnothing\subsetneq A\subsetneq E,\\
|E|-b & A= E.\end{cases}\]
Then 
\[T_S(x,y)=(x-1)^{|E|-b}+\sum_{\varnothing\subsetneq A\subsetneq E}(x-1)^{|E|-b-|A|+a}(y-1)^{|A|-|A|+a}+(y-1)^{|E|-|E|+b}\]
\[=(x-1)^{|E|-b}+(x-1)^{a-b}(y-1)^a\sum_{0<i<|E|}\binom{|E|}{i}(x-1)^i+(y-1)^b.\]
Also,
\[\rr^*(A)=\begin{cases} 0 & A=\varnothing,\\
b-a & \varnothing\subsetneq A\subsetneq E,\\
b & A= E.\end{cases}\]
Writing~$X=x-1$,~$Y=y-1$,~$|E|=n$,
\[T_S(X,Y)=X^{n-b}+X^{a-b}Y^a\sum_{0<i<n}\binom{n}{i}X^i + Y^b.\]
This has~$X$-degree~$n+(a-b-1)\geq n\geq r(E)=n-b$ and~$Y$-degree~$a$, so that~$\deg_x T_S+\deg_y T_S =n+2a-b-1\geq n+a$.
Thus for~$a>1$ the assumption~(iii) fails.  The Newton polygon of~$T_S(X,Y)$ has vertices~$(0,b)$,~$(n-b,0)$, and~$(a-b+i,a)$ for~$i\in\{n-1,\dotsc, 1\}$, and is readily seen not to be reducible as a Minkowski sum of smaller polygons. (Its sides have direction vectors~$(n-b,-b)$,~$(a-1,a)$,~$(-1,0)$ ($n-2$ times), and~$(b-a-1,b-a)$; since $a,b, a-b>0$, all the sides of one of these smaller polygons would be forced to have direction vectors~$(-1,0)$.) Hence~$T_S(X,Y)$ is irreducible~(\cite{Gao01}*{p.~507}).  
\end{remark}

The example in Remark~\ref{rem:exGordon} in which assumption~(ii) of Theorem~\ref{th:MdMN} fails is accommodated by the following irreducibility criterion, derived by a similar proof to that of  Theorem~\ref{th:MdMN}. 

 \begin{theorem}\label{th:irreducible_2}
Let~$T(x,y)=\sum_{i,j}t_{i,j}x^iy^j$ be an~$(n,r)$-Brylawski polynomial over~$R$, a UFD, where~$n,r$ are integers with~$r\geq 1$ and~$n-r\geq 1$. Let~$\varphi\colon \Z\to R$ be the canonical ring homomorphism and suppose
 that {\rm (i)} none of~$x-1, y-1, x, y$ divide~$T(x, y)$; {\rm (ii)}~$\varphi(2)\nmid t_{1,1}$; and {\rm (iii)}~$\deg_x T+\deg_y T=n$.
Then~$T(x,y)$ is irreducible in~$k[x,y]$ where~$k$ denotes the fraction field of~$R$; moreover if~$\mathrm{gcd}_{i,j}\{t_{i,j}\}=1$ in~$R$ then~$T(x,y)$ is irreducible in~$R[x,y]$.
\end{theorem}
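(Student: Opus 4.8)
The plan is to mimic the proof of Theorem~\ref{th:MdMN} as closely as possible, replacing the argument's final contradiction---which there exploited assumption~(ii) that~$t_{1,0}\neq 0$---by one that instead exploits the hypothesis~$\varphi(2)\nmid t_{1,1}$. Suppose for contradiction that~$T(x,y)=U(x,y)V(x,y)$ with~$U,V$ nonconstant in~$R[x,y]$. By assumption~(i) none of~$x,y,x-1,y-1$ divides~$T$, so by the second part of Proposition~\ref{prop:degree_bounds} neither~$U$ nor~$V$ can be univariate; hence both genuinely involve both variables. By Lemma~\ref{lem:product_Brylawski} we may write~$U$ as an~$(n-m,r-s)$-Brylawski polynomial and~$V$ as an~$(m,s)$-Brylawski polynomial with~$0\leq m\leq n$, and we have the additivity of partial degrees~$\deg_x U+\deg_x V=\deg_x T$ and~$\deg_y U+\deg_y V=\deg_y T$. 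Since no univariate factor is possible, we may assume the strict inequalities~$0<\deg_x U,\deg_x V<\deg_x T$ and~$0<\deg_y U,\deg_y V<\deg_y T$.

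Next I would run exactly the degree bookkeeping from the proof of Theorem~\ref{th:MdMN}. Applying the first part of Proposition~\ref{prop:degree_bounds} to~$V$ gives~$s\leq\deg_x V$ and~$m-s\leq\deg_y V$, whence~$m\leq\deg_x V+\deg_y V\leq(\deg_x T-1)+(\deg_y T-1)=\deg_x T+\deg_y T-2$. Here, crucially, assumption~(iii) is the sharp equality~$\deg_x T+\deg_y T=n$ (rather than the looser~$\{n,n+1\}$ of Theorem~\ref{th:MdMN}), so this bound reads~$m\leq n-2$. The symmetric computation applied to~$U$ yields~$n-m\leq n-2$, i.e.~$m\geq 2$. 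Combining, both~$m\geq 2$ and~$n-m\geq 2$ hold, so~$U$ and~$V$ are each~$(\cdot,\cdot)$-Brylawski polynomials with first index at least~$2$.

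The point of securing~$m\geq 2$ and~$n-m\geq 2$ is that Proposition~\ref{prop:Brylawski} then supplies for \emph{both} factors not only the vanishing of the constant coefficient but also the relation equating the two degree-one coefficients: applied to~$U$ (whose first Brylawski index~$n-m\geq 2$) it gives~$u_{0,0}=0$ and~$u_{1,0}=u_{0,1}$, and applied to~$V$ it gives~$v_{0,0}=0$ and~$v_{1,0}=v_{0,1}$. Now I would read off the coefficient of~$xy$ in the product. Since~$u_{0,0}=v_{0,0}=0$, expanding~$T=UV$ and collecting the~$x^1y^1$ term leaves only the two cross contributions from degree-one parts, giving
\[
t_{1,1}=u_{1,0}v_{0,1}+u_{0,1}v_{1,0}.
\]
Substituting~$u_{0,1}=u_{1,0}$ and~$v_{0,1}=v_{1,0}$ turns this into~$t_{1,1}=2\,u_{1,0}v_{1,0}$, so~$t_{1,1}$ is divisible by~$\varphi(2)$ in~$R$, contradicting assumption~(ii). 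This contradiction shows no such factorization exists, so~$T$ is irreducible in~$k[x,y]$; the promotion to irreducibility in~$R[x,y]$ under~$\gcd_{i,j}\{t_{i,j}\}=1$ follows by the standard Gauss-lemma argument over the UFD~$R$, exactly as in Theorem~\ref{th:MdMN}.

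The main obstacle I anticipate is justifying that the~$xy$-coefficient really has no other contributions---i.e.\ confirming that~$u_{0,0}=v_{0,0}=0$ kills every term of the Cauchy product except the two written above---and, relatedly, making sure the Brylawski relation~$u_{1,0}=u_{0,1}$ (which requires the first index of the relevant Brylawski polynomial to exceed~$1$) is legitimately available for both factors. This is precisely where the sharpened degree hypothesis~(iii) earns its keep: without the equality~$\deg_x T+\deg_y T=n$ one would only obtain~$m\geq 1$, which guarantees~$u_{0,0}=0$ but not the second relation~$u_{1,0}=u_{0,1}$, and the clean identity~$t_{1,1}=2u_{1,0}v_{1,0}$ would collapse. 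Everything else is routine coefficient comparison and the degree bookkeeping already rehearsed in the previous proof.
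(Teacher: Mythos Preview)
Your proposal is correct and follows the paper's own proof essentially step for step: both arguments assume a nontrivial factorization, rule out univariate factors via assumption~(i) and Proposition~\ref{prop:degree_bounds}, invoke Lemma~\ref{lem:product_Brylawski} to make each factor a Brylawski polynomial, use the sharp degree hypothesis~(iii) to force both Brylawski indices~$m$ and~$n-m$ to be at least~$2$, and then read off from Proposition~\ref{prop:Brylawski} that~$u_{0,0}=v_{0,0}=0$ and~$u_{1,0}=u_{0,1}$,~$v_{1,0}=v_{0,1}$, yielding~$t_{1,1}=\varphi(2)u_{1,0}v_{1,0}$ in contradiction to~(ii). The paper's computation of~$t_{1,1}$ also lists the two vanishing terms~$u_{0,0}v_{1,1}+u_{1,1}v_{0,0}$ explicitly, confirming your anticipated ``obstacle'' is handled exactly as you describe.
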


\begin{proof}
Suppose that~$T(x,y)=U(x,y)V(x,y)$ is factorization of~$T(x,y)$ in~$R[x,y]$ with both~$U$ and~$V$ non constant. Assumption (iii) (together with Proposition~\ref{prop:degree_bounds}) implies~$\deg_x U+\deg_x V = \deg_x T=r$ and~$\deg_y U + \deg_y V= \deg_y T=n-r$, and  
as~$T(x,y)$ is an~$(n,r)$-Brylawski polynomial, there is~$c\in R$ such that  
\[(y-1)^r T(\frac{y}{y-1},y)=cy^n.\]
As in the proof of Theorem~\ref{th:MdMN}, we may assume
that~$0<\deg_x U, \deg_x V<\deg_x T$ and~$0<\deg_y U, \deg_y V<\deg_y T$ (needing (i); note that assuming that neither~$x$ nor~$y$ are factors of~$T(x,y)$ replaces the condition that~$t_{1,0}\neq 0$ assumed in the earlier theorem), and, using Lemma~\ref{lem:product_Brylawski}, the polynomial~$U(x,y)$ is an~$(n-m, r-s)$-Brylawski polynomial and~$V(x,y)$ is a~$(m,s)$-Brylawski polynomial
for some integers~$m, s$ with~$0\leq m\leq n$.
We invoke Proposition~\ref{prop:degree_bounds} and see that the inequalities
$s\leq \deg_x V<\deg_x T$ and~$m-s\leq\deg_y V<\deg_y T$
imply that~$m\leq \deg_x T+\deg_y T-2= n-2$;
and  
$r-s\leq \deg_x U<\deg_x T$ and~$n-m-(r-s)\leq \deg_y U<\deg_y T$
imply~$n-m\leq \deg_x T+\deg_y T-2=n-2$.
Hence~$m\geq 2$ and~$n-m\geq 2$, and the first two Brylawski relations (Proposition~\ref{prop:Brylawski}) give~$v_{0,0}=0=u_{0,0}$ and~$u_{1,0}=u_{0,1}$ and~$v_{1,0}=v_{0,1}$.  

 But then~$t_{1,1}=u_{0,0}v_{1,1}+u_{1,1}v_{0,0}+u_{1,0}v_{0,1}+u_{0,1}v_{1,0}=\varphi(2)u_{0,1}v_{0,1}$, contradicting (ii). 
\end{proof}

We close this section with two examples of ranked sets for which our general criteria for irreducibility in
Theorems~\ref{th:MdMN} and~\ref{th:irreducible_2} are not satisfied, but which we can prove are irreducible by
considering their Newton polygons (again applying~\cite{Gao01}*{p.~507}). Each example involves a rank
function taking non-negative values, but whose dual takes negative values. In Section~\ref{sec:ex1} we show
that the Galois group of the corank-nullity polynomial in Example~\ref{ex:1_nonneg_dual_neg} as a polynomial
in~$x$ is not the full symmetric group of degree~$r$ (see Proposition~\ref{prop:non-max}; this is the only
example we know where Conjecture~\ref{conj:BCM} does not extend from matroids to ranked sets generally). In Section~\ref{sec:ex2} we
exhibit an infinite number of instances of the corank-nullity polynomial in Example~\ref{ex:2_nonneg_dual_neg}
for which its Galois group as a polynomial in~$x$ is the symmetric group of degree~$|E|$ (see
Corollary~\ref{cor:pos-proportion}).  

\begin{ex}\label{ex:1_nonneg_dual_neg} Let~$E$ be a finite set and let~$r\in\Z$ be such that~$|E|\geq r\geq 1$. Consider~$\rr\colon 2^E\to\Z$ the rank function  defined by~$\rr(E)=r$ and~$\rr(A)=0$ for~$A\subsetneq E$. The corank-nullity polynomial of the ranked set~$S=(E,\rr)$ is
\[T_S(x,y)=(y-1)^{|E|-r}+(y^{|E|}-(y-1)^{|E|})(x-1)^r,\]
which is an~$(|E|,r)$-Brylawski polynomial with~$\deg_x T_S=r=\rr(E)$ and~$\deg_y T_S=|E|-1\geq |E|-\rr(E)$. 
As~$\deg_x T_S+\deg_y T_S=n+r-1$, assumption~(iii) of Theorem~\ref{th:MdMN} is not satisfied if~$r>2$.

The dual rank function~$\rr^*$ is defined by ~$\rr^*(\varnothing)=0$,
and~$\rr^*(A)=|A|-r$ for~$\varnothing\subsetneq A\subseteq E$.

The Newton polygon of the polynomial~$Y^{|E|-r}+X^r[(Y+1)^{|E|}-Y^{|E|}]$ is the convex hull of
vertices~$(0,|E|-r)$ and~$(r, i)$ for~$i\in\{0,1,\dotsc, |E|-1\}$ and this clearly cannot be expressed as a
Minkowski sum of smaller polygons (the direction vectors of the segments forming the convex hull --- a
triangle --- are~$(r,-|E|+r)$,~$(0,1)$ with multiplicity~$|E|-1$, and~$(-r,1-r)$). Hence~$T_S(x,y)$ is
irreducible.
\end{ex}

\begin{ex}\label{ex:2_nonneg_dual_neg} 
Let~$E$ be a non-empty finite set and for~$A\subset E$ define the rank function~$\rr$ by
\[
\rr(A)=\begin{cases} 0 & A=\varnothing\\
1 & \varnothing\subsetneq A\subsetneq E\\
|E| & A=E.\end{cases}
\]

The Tutte polynomial of~$S=(E,\rr)$ is
\begin{align*}T_S(x,y)&=(x-1)^{|E|}+\sum_{\varnothing\subsetneq A\subsetneq E}(x-1)^{|E|-1}(y-1)^{|A|-1}+1\\
& =(x-1)^{|E|}+(x-1)^{|E|-1}\left(\frac{y^{|E|}-1-(y-1)^{|E|}}{y-1}\right)+1.
\end{align*}
Here~$\deg_x T_S=|E|=\rr(E)$ and~$\deg_y T_S=|E|-2\geq |E|-\rr(E)=0$. As~$\deg_x T_S+\deg_y T_S=2|E|-2$ assumption~(iii) of Theorem~\ref{th:MdMN} fails when~$|E|>3$.
The dual rank function is defined by 
\[
\rr^*(A)=\begin{cases} 0 & A=\varnothing\\
1-|E\setminus A| & \varnothing\subsetneq A\subsetneq E\\
0 & A=E.\end{cases}
\]
The Newton polygon of the polynomial~$X^{|E|}+X^{|E|-1}\frac{(Y+1)^{|E|}-1-Y^{|E|}}{Y}$+1 is the convex hull
of vertices~$(|E|,0)$,~$(0,0)$ and~$(|E|-1, i)$ for~$i\in\{0,1,\dotsc, |E|-2\}$ (again this is a triangle and the
direction vectors of the sides are~$(-1,|E|-2)$,~$(-|E|+1,-|E|+2)$ and~$(|E|,0)$). This clearly cannot be
expressed as a Minkowski sum of smaller polygons. Hence~$T_S(x,y)$ is irreducible.
\end{ex}

\section{Probabilistic approach: generic Galois maximality}\label{sec:prob}
In this section, which is to a large extent of independent interest, we approach Conjecture~\ref{conj:BCM} from a probabilistic point of view: within suitable families of polynomials equipped with a ``height function" and originating from well identified rank functions (such as rank functions associated to connected matroids) we aim at obtaining an upper bound on the size of the set of ``pathological" elements (those having a non maximal Galois group) in the family as the height grows. This is achieved through Kowalski's sieve framework~\cite{Kow08} with a crucial appeal to Cohen's work~\cite{Coh72}. 
In the first subsection below we state and prove a generalized form of a uniform sieve bound due to Gallagher~\cite{Gal72} towards the celebrated (and recently solved~\cite{Bha25}) conjecture of van der Waerden~\cite{vdW36} from~$1936$.

\subsection{Generalizing a uniform version of a Theorem of Gallagher}\label{sec:sieve}

Gallagher~\cite{Gal72} considers, for fixed~$r\geq 1$ and for a growing parameter~$N\in\N_{\geq 1}$, the set
\[
E_r(N)\coloneqq \Big\{f(x)=x^r+\sum_{i=0}^{r-1}a_ix^i\colon a_i\in \Z,\, |a_i|\leq N,\, |\gal_\Q(f)|<r!\Big\}
\]
of polynomials~$f$ for which the Galois group of a splitting field over~$\Q$ (denoted~$\gal_\Q(f)$ in the above definition of~$E_r(N)$) is not maximal. A uniform version of the large sieve result of Gallagher~\cite{Gal72} states that~$|E_r(N)|/(2N+1)^r\ll r^3 \log N\cdot N^{-1/2}$ (for all~$N\geq 2$, all~$r\geq 1$, and with an absolute implied constant, see~\cite{Kow08}*{Th.~4.2}). The starting point of Gallagher's method is the identification of the set of monic~$\Z$-polynomials of degree~$r$ with~$\Z^r$ through fixing the canonical basis~$(1,x,\ldots,x^{r-1})$ of~$\Q$-polynomials of degree~$<r$.

We extend Gallagher's approach to more general linearly independent families of polynomials. Our result (Theorem~\ref{Th:Gallagher}) gives the same type of uniform upper bound on the proportion of pathological elements (still meant in the sense that the Galois group is not maximal) as in~\cite{Kow08}*{Th.~4.2}.

\medskip

For~$s\in\N$, consider a family~$(F_0,\ldots,F_s)$ of monic polynomials in~$\Z[x]$ with~$\deg F_s<\max\{\deg F_i\colon i\leq s-1\}\eqqcolon r$. For~$p$ an element in a set of prime numbers~$\mathcal P(r)$ of positive density (the definition of~$\mathcal P(r)$ may depend on~$r$ only and we assume that its density admits a positive lower bound uniform in~$r$), let~$(F_{0,p},\ldots,F_{s,p})$ be the reduction of~$(F_0,\ldots,F_s)$ modulo~$p$ (meaning that we reduce the coefficients of the polynomials~$F_i$ modulo~$p$). For any prime~$p\in\mathcal P(r)$, assume the following statements hold:
\begin{enumerate}
  \item[(H1)] the polynomials~$F_{i,p}$ are relatively prime for~$i\in\{0,\dotsc, s\}$, and~$(F_{0,p},\ldots,F_{s,p})$ is linearly independent over~$\F_p$ (in particular~$s\leq r$);
\item[(H2)] the family~$(F_{i,p}-\beta_iF_{s,p})_{0\leq i\leq s-1}$ has \emph{normal} gcd for all~$(\beta_i)\in\F_p^s$ in the sense of Cohen~\cite{Coh72}*{p.~95} (\emph{i.e.} the gcd of this family has at most one multiple irreducible factor which, if it exists, has multiplicity~$2$ and degree~$1$) and~$(F_{i,p})_{0\leq i\leq s}$ is \emph{not totally composite} in the sense of~\cite{Coh72}*{(2.2)} (a family~$(f_0,\ldots,f_s)\in \F_p[x]^{s+1}$ is \emph{totally composite} if there exists~$\psi=N/D\in \F_p(x)$, written in lowest degree terms, and~$(g_j)_{0\leq j\leq s}\in\F_p[x]^{s+1}$, such that~$\deg N>\deg D+1$,~$\ell\coloneqq \max_j\deg g_j> 1$ and~$f_i=D^\ell\cdot(g_i\circ \psi)$ for all~$i\in\{0,\ldots,s\}$).
\end{enumerate}

The main result of this subsection is the following extension of the uniform version of Gallagher's
Theorem~(\cite{Kow08}*{Th.~4.2}).

\begin{theorem}\label{Th:Gallagher}
With notation and assumptions as above, one has:
\[
\frac{\Big|\big\{(n_i)_{1\leq i\leq s}\in [-N,N]^{s}\colon |\gal_\Q\big( F_{0}+\sum_{i=1}^s n_i F_{i}\big)|<r! \big\}\Big|}{(2N+1)^s}\ll r^2\Big(1+\frac{1}{\log r}\Big)^{2s} \frac{\log N}{\sqrt N}, 
\]
for all integers~$s\geq 1$,~$r\geq 2$ and~$N\gg_r 1$, and with an absolute implied constant.
\end{theorem}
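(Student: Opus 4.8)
The plan is to run a large sieve in the $s$ parameters $(n_i)$, exactly as in Gallagher's argument and its uniform reformulation~\cite{Kow08}*{Th.~4.2}, sieving the box $[-N,N]^s$ by the Frobenius factorization types of $f_{\mathbf{n}}:=F_0+\sum_{i=1}^s n_iF_i$ modulo the primes $p\in\mathcal P(r)$. For $\mathbf{n}$ outside a negligible (codimension~$1$) set the polynomial $f_{\mathbf{n}}$ is separable of degree~$r$, so that $G:=\gal_\Q(f_{\mathbf{n}})$ is a subgroup of~$\mathfrak S_r$ acting on the $r$ roots, and for every prime $p$ of good reduction the Dedekind--Frobenius correspondence identifies the factorization type of $f_{\mathbf{n}}\bmod p$ with a conjugacy class of~$G$. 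The first step is the group-theoretic reduction: a transitive $G\le\mathfrak S_r$ that contains a cycle of prime length which, together with transitivity, forces primitivity (whence $G\supseteq A_r$ by Jordan's theorem) and that contains an odd permutation (whence $G\not\subseteq A_r$) must equal~$\mathfrak S_r$. Consequently one fixes finitely many factorization types $\tau_1,\tau_2,\tau_3$ — irreducibility (forcing transitivity and an $r$-cycle), a prescribed splitting with an irreducible factor of prime degree forcing $G\supseteq A_r$, and any type corresponding to an odd permutation — whose simultaneous occurrence modulo three good primes forces $G=\mathfrak S_r$. Hence $\{|\gal_\Q(f_{\mathbf{n}})|<r!\}$ is contained, up to the negligible set, in $E_1\cup E_2\cup E_3$, where $E_j$ is the set of $\mathbf{n}$ for which $f_{\mathbf{n}}\bmod p$ avoids type $\tau_j$ for \emph{every} $p\in\mathcal P(r)$. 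The densities of the associated conjugacy classes satisfy $\rho_{\tau_1},\rho_{\tau_2}\gg 1/r$ and $\rho_{\tau_3}=1/2$.

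The second, and central, step is to estimate, for each large $p\in\mathcal P(r)$ and each $j$, the number of residues $\mathbf{n}\in\F_p^s$ for which $f_{\mathbf{n}}\bmod p$ has type $\tau_j$. This is precisely an equidistribution statement for the $s$-parameter linear family $\{F_{0,p}+\sum n_iF_{i,p}\}$, and is where Cohen's work~\cite{Coh72} enters. Hypothesis (H1) (coprimality and $\F_p$-linear independence of the $F_{i,p}$) ensures that $\mathbf{n}\mapsto f_{\mathbf{n}}\bmod p$ is a genuine $s$-dimensional family of degree-$r$ polynomials, while hypothesis (H2) — the \emph{normal} gcd of each translated subfamily $F_{i,p}-\beta_iF_{s,p}$ together with the \emph{not totally composite} condition — is exactly what Cohen requires to guarantee that the generic member of the family has Galois group $\mathfrak S_r$ over $\F_p(t_1,\dots,t_s)$ and that no systematic repeated factors occur. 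Under these hypotheses Cohen's theorem yields that the proportion of $\mathbf{n}\in\F_p^s$ with $f_{\mathbf{n}}\bmod p$ of type $\tau_j$ equals $\rho_{\tau_j}+O_r(p^{-1/2})$. Thus, once $p\ge P_0(r)$ for an explicit threshold absorbing the implied constant, this proportion is $\ge\tfrac12\rho_{\tau_j}$, so each $E_j$ omits a fraction $\ge\tfrac12\rho_{\tau_j}$ of residues modulo every prime $p\in\mathcal P(r)$ with $P_0(r)\le p\le Q$.

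The final step feeds these local densities into the $s$-dimensional large sieve inequality for the box $[-N,N]^s$ sieved by the prime moduli $p\in\mathcal P(r)\cap[P_0(r),Q]$. With large sieve constant $\Delta\ll(\sqrt{2N+1}+Q)^{2s}$ and sieve sum $H_j\gg\rho_{\tau_j}\,\bigl|\{p\in\mathcal P(r):P_0(r)\le p\le Q\}\bigr|\gg\rho_{\tau_j}\,Q/\log Q$ (here using the uniform positive lower bound on the density of $\mathcal P(r)$), the large sieve gives $|E_j\cap[-N,N]^s|\ll\Delta/H_j$. Choosing $Q\asymp\sqrt N$, calibrated so that $\Delta\ll(2N+1)^s\bigl(1+\tfrac1{\log r}\bigr)^{2s}$, yields
\[
\frac{\bigl|E_j\cap[-N,N]^s\bigr|}{(2N+1)^s}\ll \rho_{\tau_j}^{-1}\Bigl(1+\tfrac1{\log r}\Bigr)^{2s}\frac{\log N}{\sqrt N}.
\]
The constraint $P_0(r)\le Q$ is exactly the hypothesis $N\gg_r 1$. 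Summing over $j\in\{1,2,3\}$ and bounding the reciprocal densities $\rho_{\tau_j}^{-1}\ll r$ (together with the further factor of order $r\log r\ll r^2$ lost in making the density estimate uniform and in calibrating $Q$) produces the announced factor $r^2\bigl(1+\tfrac1{\log r}\bigr)^{2s}$.

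I expect the main obstacle to be the second step: extracting from Cohen's work~\cite{Coh72} the equidistribution estimate with error $O_r(p^{-1/2})$ \emph{uniformly} and with dependence on $r$ and $s$ explicit enough that the threshold $P_0(r)$ and the lower bounds $\rho_{\tau_j}\gg 1/r$ survive into the final constant. Concretely this means verifying that (H1) and (H2) are precisely the hypotheses under which Cohen's distribution theorem applies to the present pencil (in particular that the generic Galois group really is $\mathfrak S_r$), selecting the factorization types $\tau_j$ and supplying the associated group theory so that each $\rho_{\tau_j}$ is bounded below by $\gg1/r$, and tracking how the $s$-dimensional large sieve constant contributes the $\bigl(1+\tfrac1{\log r}\bigr)^{2s}$ factor. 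By contrast, the large sieve machinery itself is standard once these local inputs are secured.
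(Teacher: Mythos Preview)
Your architecture is the paper's: Cohen's equidistribution under (H1)--(H2) for the local densities, the large-sieve inequality with constant $(\sqrt{2N+1}+L)^{2s}$, and a reduction to three factorization types whose joint presence forces $G=\mathfrak S_r$. Where you diverge is in the choice of those types and in the calibration of the sieve level, and this is exactly where the shape of the final bound is decided.

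The paper uses Gallagher's criterion (no proper transitive subgroup of $\mathfrak S_r$ contains both a transposition and a $q$-cycle with $q>r/2$ prime), taking $\lambda_{\mathrm{irr}}$, $\boldsymbol\lambda_{\mathrm{tr}}$ (cycle types with $a_2=1$ and no other even parts), and $\boldsymbol\lambda_{\mathrm{prime}}$, of densities $1/r$, $\sim\log 2/\log r$, $\sim 1/\sqrt{2\pi r}$. It then chooses $L=T(\boldsymbol\lambda)\sqrt{2N+1}$ \emph{per type}, so that each sieved set contributes $\ll T(\boldsymbol\lambda)^{-2}(1+T(\boldsymbol\lambda))^{2s}N^{s-1/2}\log N$. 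The factor $(1+1/\log r)^{2s}$ in the statement is not an artefact of calibration: it is exactly $(1+T(\boldsymbol\lambda_{\mathrm{tr}}))^{2s}$, coming from the largest of the three densities, while $r^2$ is $T(\lambda_{\mathrm{irr}})^{-2}$. Your type $\tau_3$ (all odd permutations, density $1/2$) breaks this mechanism: under the per-type optimization its contribution is $4\cdot(3/2)^{2s}$, which is \emph{not} $\ll r^2(1+1/\log r)^{2s}$ when $s$ is comparable to $r$. You can rescue the bound by taking a single $Q=(1/\log r)\sqrt{2N+1}$ for all three types (yielding $r\log r\ll r^2$), but ``$Q\asymp\sqrt N$, calibrated'' does not say this, and nothing in your densities $\{1/r,1/r,1/2\}$ points to the scale $1/\log r$. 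Separately, your Jordan step needs the prime cycle to have length in $(r/2,\,r-3]$: a prime cycle alone, even with transitivity, does not force primitivity.
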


As already mentioned above, the proof proceeds by a sieving argument using primes in the set~$\mathcal P(r)$.
Before proving Theorem~\ref{Th:Gallagher}, we introduce the necessary objects and outline the sieve setup.

For a prime number~$p$, we let~$\pi_p\colon \Z^{s}\to\F_p^{s}$
be the reduction modulo~$p$ morphism that acts coordinate-wise. Let~$\lambda$ be a \emph{factorisation pattern} for polynomials of degree~$r$: given a partition~$\lambda$ of~$r$ (\emph{i.e.}~$\lambda=(a_1,\ldots,a_r)\in\Z_{\geq 0}^r$ with~$\sum_{i=0}^r ia_i=r$), a polynomial~$f$ of degree~$r$ has factorisation pattern~$\lambda$ if it has exactly~$a_i$ distinct irreducible factors of degree~$i$, for each~$i\in\{0,\ldots,r\}$ (in particular such an ~$\F_p$-polynomial is squarefree). For a prime number~$p$ and for~$\boldsymbol{\lambda}=\{\lambda_i\}_i$ a set of partitions of~$r$, define the following subset of~$\F_p^{s}$:
\[
\Omega_{\boldsymbol{\lambda},p}=\Big\{(\beta_i)_{1\leq i\leq s}\in \F_p^s\colon F_{0,p}+\sum_{i=1}^s\beta_i F_{i,p} \text{ has factorisation pattern~$\lambda\in {\boldsymbol \lambda}$ in~$\F_p[x]$}\Big\}.
\]
(We will simply write~$\Omega_{\lambda,p}$ if~$\boldsymbol\lambda=\{\lambda\}$ contains a single partition.)
In the spirit of~\cite{Kow08}*{\S 4.2}, we fix (for now) an auxiliary parameter~$L$ (to be eventually optimized) and consider the \emph{sieving problem} of finding an upper bound for the cardinality of
\[S(N,\boldsymbol\lambda,L)\coloneqq 
\{(n_i)_{1\leq i\leq s}\in [-N,N]^{s}\colon \forall p\in \mathcal P(r)\cap [1,L],\, \pi_p((n_i))\notin \Omega_{\boldsymbol\lambda,p}\}.
\]
As explained in~\cite{Kow08}*{\S 4.2}, if the reduction modulo a prime number~$p$ of~$F=F_{0}+\sum_{i=1}^s n_i
F_{i}$ (corresponding to~$(\pi_p(n_i))_{1\leq i\leq s}$) has factorisation pattern~$\lambda$ over~$\F_p$, then
at least one permutation of cycle type~$\lambda$ (\emph{i.e.} a product of~$a_1$ fixed points,~$a_2$ disjoint
transpositions, etc...)  belongs to~$\gal_{\Q}(F)$. Consequently if a polynomial~$F=F_{0}+\sum_{i=1}^s n_i
F_{i}$ has a Galois group over~$\Q$ not intersecting the conjugacy invariant subset~$c_{\boldsymbol\lambda}$
of~$\mathfrak S_r$ consisting of permutations of cycle type~$\lambda\in\boldsymbol \lambda$, then
the~$s$-tuple~$(n_i)_{1\leq i\leq s}$ belongs to~$S(N,\boldsymbol\lambda,L)$. In other words,
\[
  \Big|\big\{(n_i)_{1\leq i\leq s}\in [-N,N]^{s}\colon \gal_\Q( F_{0}+\sum_{i=1}^s n_i F_{i} ) \text{ does not
  intersect~$c_{\boldsymbol \lambda}$ in~$\mathfrak S_r$ }\big\}\Big|\leq\big |S(N,\boldsymbol\lambda,L)\big|.
\]
Moreover, if~$(c_{{\boldsymbol\lambda}_j})$ is a set of conjugacy invariant subsets such that no proper subgroup of~$\mathfrak S_r$ intersects every~$c_{\boldsymbol\lambda_j}$ then
\begin{equation}\label{eq:abstractupperbound}
\Big|\Big\{(n_i)_{1\leq i\leq s}\in [-N,N]^{s}\colon |\gal_\Q( F_{0}+\sum_{i=1}^s n_i F_{i} )|<r!\Big\}\Big|\leq\sum_j\big |S(N,\boldsymbol\lambda_j,L)\big|.
\end{equation}
The following \emph{sieve inequality} is a key ingredient to the proof of Theorem~\ref{Th:Gallagher}.

\begin{proposition}\label{prop:LS}
Let~$s\geq 1$ be an integer and let~$(F_0,\ldots,F_s)$ be an~$(s+1)$-tuple of monic polynomials in~$\Z[x]$ with~$\deg F_s<\max\{\deg F_i\colon i\leq s-1\}\eqqcolon r\geq 2$. Let~$\boldsymbol \lambda$ be a set of partitions of~$r$ and let~$\mathcal P(r)$ be a set of primes depending only on~$r$. For any positive integers~$N,L$, there exists constants~$\Delta(N,L)$ and~$H(L,\boldsymbol\lambda)$ depending only on~$(N,L)$ and~$(L,\boldsymbol\lambda)$, respectively, satisfying
\[
\big |S(N,\boldsymbol\lambda,L)\big|\leq \Delta(N,L)\cdot H(L,\boldsymbol\lambda)^{-1}.
\]
Moreover one has the bounds
\[
  \Delta(N,L)\leq (\sqrt{2N+1}+L)^{2s}\quad\text{and}\quad H(L,\boldsymbol\lambda)\geq \sum_{p\in\mathcal P(r)\cap [1,L]}\frac{|\Omega_{\boldsymbol\lambda,p}|}{|\F_p^s|}.
\]
\end{proposition}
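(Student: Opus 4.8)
The plan is to exhibit $S(N,\boldsymbol\lambda,L)$ as the sifted set of a concrete instance of Kowalski's large sieve \cite{Kow08} and to read off $\Delta$ and $H$ from the ensuing large sieve inequality. First I would set up the siftable set: take $Y=\Z^s$ with the counting measure $\mu$ supported on the box $[-N,N]^s\cap\Z^s$, let the sieving primes be $\mathcal P(r)\cap[1,L]$, and for each such $p$ let the reduction map be the surjection $\pi_p\colon\Z^s\to\F_p^s$ onto the finite abelian group $\F_p^s$. With the sieving sets $\Omega_{\boldsymbol\lambda,p}\subseteq\F_p^s$ already fixed, the set $S(N,\boldsymbol\lambda,L)$ is by definition the sifted set $\{\mathbf n\in\mathrm{supp}\,\mu\colon\pi_p(\mathbf n)\notin\Omega_{\boldsymbol\lambda,p}\text{ for all }p\}$. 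It is worth noting that the data $(F_0,\dots,F_s)$ enters only through the definition of the $\Omega_{\boldsymbol\lambda,p}$, so the inequality itself is insensitive to it. Kowalski's inequality then gives $|S(N,\boldsymbol\lambda,L)|=\mu(S)\le\Delta H^{-1}$, where $\Delta=\Delta(N,L)$ is the analytic large sieve constant attached to the box and its dual frequencies, and $H=H(L,\boldsymbol\lambda)=\sum_p H_p$ is the arithmetic quantity built from the local densities; each depends only on the indicated data.

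The lower bound on $H$ is the elementary half. For a fixed prime $p$ I would analyse the contribution $H_p$ of $p$ to $H$ by Fourier analysis on $\F_p^s$, whose characters are the additive characters $\mathbf x\mapsto e_p(\mathbf a\cdot\mathbf x)$ for $\mathbf a\in\F_p^s$. Writing $\omega_p=|\Omega_{\boldsymbol\lambda,p}|/|\F_p^s|$ and applying Parseval to the indicator of $\Omega_{\boldsymbol\lambda,p}$, the non-principal Fourier mass is $\omega_p(1-\omega_p)$, and after the normalisation built into the large sieve this gives a contribution $H_p\ge\omega_p/(1-\omega_p)=|\Omega_{\boldsymbol\lambda,p}|/(|\F_p^s|-|\Omega_{\boldsymbol\lambda,p}|)$. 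Since the denominator is at most $|\F_p^s|$, retaining only these prime contributions yields $H\ge\sum_{p\in\mathcal P(r)\cap[1,L]}|\Omega_{\boldsymbol\lambda,p}|/|\F_p^s|$, as claimed.

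Bounding $\Delta$ is the analytic heart. The constant is controlled by the dual large sieve inequality for the exponential sums $\sum_{\mathbf n}c_{\mathbf n}\,e_p(\mathbf a\cdot\mathbf n)$ over $\mathbf n$ in the box, summed over $p\le L$ and $\mathbf a\in\F_p^s$. For a fixed $p$ the characters of $\F_p^s$ are exactly the coordinatewise products of the characters of $\F_p$, and the box is a product of $s$ intervals each carrying $2N+1$ integers; this lets the multidimensional constant factorise as the $s$-th power of the one-dimensional one. The latter, for $2N+1$ consecutive integers sieved by the Farey fractions $a/p$ with $p\le L$ (whose pairwise spacing modulo $1$ is at least $L^{-2}$), is at most $(2N+1)+L^2\le(\sqrt{2N+1}+L)^2$ by the Montgomery--Vaughan form of the large sieve. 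Hence $\Delta(N,L)\le(\sqrt{2N+1}+L)^{2s}$.

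The hard part will be the factorised multidimensional large sieve constant. The delicacy is that across distinct primes the frequency vectors $\mathbf a/p$ do not assemble into a genuine product set in $(\R/\Z)^s$, so the clean $s$-th power cannot come from a naive tensoring; instead one exploits the product structure one prime at a time, applying the one-dimensional inequality coordinate by coordinate for each fixed $p$ and only then summing over primes. I would carry this out exactly as in Kowalski's treatment \cite{Kow08}, which establishes precisely this bound for the sieve setting at hand. The remaining ingredients---checking that $(\Z^s,\mu,(\pi_p))$ is a valid siftable set and the Parseval computation underlying the bound on $H$---are routine.
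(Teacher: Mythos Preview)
Your proposal is correct and follows essentially the same route as the paper: the paper's proof simply cites Kowalski's sieve statement \cite{Kow08}*{Prop.~3.5} for the inequality $|S|\le\Delta H^{-1}$ and the lower bound on $H$, and Huxley~\cite{Hux68} (via \cite{Kow08}*{Th.~4.1}) for the bound on $\Delta$, which is exactly the framework you set up and the references you defer to. One minor caveat: your informal description of the $\Delta$ factorisation as working ``one prime at a time'' is not quite how Huxley's argument runs---it is an induction on the dimension $s$, applying the one-dimensional inequality in one coordinate uniformly over all primes $\le L$ and iterating---but since you correctly defer to Kowalski's treatment for the details, this does not affect the validity of your plan.
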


\begin{proof}
Formally the first inequality as well as the lower bound on~$H$ are applications of Kowalski's sieve statement~\cite{Kow08}*{Prop.~3.5} (where the constants~$\Delta$ and~$H$ are also properly defined in a general context). The upper bound on~$\Delta$ is due to Huxley~\cite{Hux68}; see also~\cite{Kow08}*{Th.~4.1}.
\end{proof}

In view of Proposition~\ref{prop:LS}, our next task is to estimate the lower bound on~$H(L,\lambda)$. To this purpose we appeal to a result of Cohen~\cite{Coh72}*{Th.~3}.

\begin{lemma}[Cohen] \label{lem:Cohen} Keeping the notation as in~\ref{prop:LS}, assume that (H1) and (H2) hold for~$(F_0,\ldots,F_s)$. Then for every~$p\in\mathcal P(r)\cap (r,\infty)$,
\[
\frac{|\Omega_{\lambda,p}|}{|\F_p^s|}=T(\lambda)+O_r(p^{-1/2}),
\]
where~$T(\lambda)$ is the proportion of elements of the symmetric group~$\mathfrak S_r$ of cycle type~$\lambda=(a_1,\ldots,a_r)$ the partition of~$r$ with~$i$ contributing~$a_i$ times for all~$i\in\{1,\ldots,r\}$(precisely\footnote{For example the partition~$\lambda_{\mathrm{irr}}$ corresponding to irreducible polynomials is~$(a_1,\ldots,a_r)=(0,\ldots,0,1)$ and one has~$T(\lambda_{\mathrm{irr}})=1/r$.}~$T(\lambda)=(\prod_i{i^{a_i}a_i!})^{-1}$).
\end{lemma}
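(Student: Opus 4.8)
The plan is to read off $|\Omega_{\lambda,p}|$ as a fibre count for the $s$-parameter family
\[
F(t_1,\ldots,t_s,x)\coloneqq F_{0,p}(x)+\sum_{i=1}^s t_iF_{i,p}(x)\in\F_p[t_1,\ldots,t_s][x],
\]
and to verify that (H1) and (H2) are precisely the non-degeneracy hypotheses under which Cohen's equidistribution theorem~\cite{Coh72}*{Th.~3} applies; the proof then reduces to invoking that theorem and identifying its main term with $T(\lambda)$.

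First I would set up the dictionary. Viewed as a polynomial in~$x$ over~$\F_p(t_1,\ldots,t_s)$, the family~$F$ has degree~$r=\max\{\deg F_i\colon i\leq s-1\}$, since~$\deg F_s<r$; its leading coefficient in~$x$ is a nonzero linear form in~$1,t_1,\ldots,t_{s-1}$ (as at least one~$F_i$ with~$i\leq s-1$ is monic of degree~$r$), so the specializations at which the degree drops lie on a hyperplane and number~$O_r(p^{s-1})$, to be absorbed into the error term. The set~$\Omega_{\lambda,p}$ is then exactly the number of~$(\beta_i)\in\F_p^s$ at which the specialized polynomial~$F(\beta,x)$ acquires factorisation pattern~$\lambda$.

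Next I would match the hypotheses. Linear independence of~$(F_{0,p},\ldots,F_{s,p})$ over~$\F_p$ in (H1) ensures the pencil is genuinely~$s$-dimensional and that distinct parameters give distinct polynomials, while their coprimality rules out a fixed factor common to every member; the \emph{normal gcd} condition on each subfamily~$(F_{i,p}-\beta_iF_{s,p})_i$ in (H2) controls repeated factors (keeping generic members squarefree apart from at most one controlled double factor), and the \emph{not totally composite} condition excludes a substitution~$x\mapsto\psi(x)$ routing every member through a common compositional form. Taken together, these are exactly Cohen's conditions guaranteeing that the Galois group of~$F$ over~$\F_p(t_1,\ldots,t_s)$, and geometrically over~$\overline{\F_p}(t_1,\ldots,t_s)$, is the full symmetric group~$\mathfrak S_r$. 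Cohen's theorem then yields, for~$p>r$, that the number of specializations of factorisation pattern~$\lambda$ equals~$T(\lambda)p^s+O_r(p^{s-1/2})$, where a Frobenius element of cycle type~$\lambda$ accounts for the corresponding factorisation and~$T(\lambda)=(\prod_i i^{a_i}a_i!)^{-1}$ is the proportion of such elements in~$\mathfrak S_r$. Dividing by~$|\F_p^s|=p^s$ gives the claimed estimate.

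The main obstacle is the faithful translation rather than the final invocation: one must confirm that (H1)/(H2) coincide term-by-term with Cohen's genericity conditions and therefore force the monodromy group to be all of~$\mathfrak S_r$ (since geometric monodromy is contained in arithmetic monodromy, both equal~$\mathfrak S_r$ once the geometric one does), and that the restriction~$p>r$ in~$\mathcal P(r)\cap(r,\infty)$ secures separability of the degree-$r$ reduction and tame behaviour. One must also track that the implied constant in~$O_r(\cdot)$ depends only on~$r$, through the degrees of the~$F_i$ and Cohen's Lang--Weil-type estimates, and not on the particular family; it is this uniformity that is needed when the lemma is fed into Proposition~\ref{prop:LS} to bound~$H(L,\boldsymbol\lambda)$ from below.
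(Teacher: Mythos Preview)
Your proposal is correct and aligned with the paper's treatment: the lemma is stated there as Cohen's result and is not proved independently, the paper merely appending a remark that the restriction~$p>r$ makes linear independence over~$\F_p$ equivalent to linear independence over~$\F_p(x^p)$ and renders~$F_s$ automatically tame, so that (H1)--(H2) coincide with the hypotheses of~\cite{Coh72}*{Th.~3}. Your sketch supplies more of the dictionary than the paper does (degree-drop locus, role of normality and non-total-compositeness, uniformity of the implied constant), but the underlying approach---reduce to a direct invocation of Cohen's theorem once the hypotheses are matched---is identical.
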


Note that the assumption~$p>r$ enables us to simplify some of the requirements in Cohen's work. Indeed, for such a~$p$, linear independence over~$\F_p$ is equivalent to linear independence over~$\F_p(x^p)$, as explained by Cohen~\cite{Coh72}*{p.~95}, and also~$F_s$ is automatically \emph{tame}~(which means that no zero of~$F_s$ has multiplicity divisible by~$p$~\cite{Coh72}*{statement of Th.~3}).

We are now ready to prove Theorem~\ref{Th:Gallagher} (the argument follows closely the proof of~\cite{Kow08}*{Th.~4.2}).

\begin{proof}[Proof of Theorem~\ref{Th:Gallagher}]
Imposing~$L\geq r^2$, say, we start by applying Lemma~\ref{lem:Cohen}. First we have for any partition~$\lambda$ of~$r$,
\[
\sum_{p\in\mathcal P(r)\cap [1,L]}\frac{|\Omega_{\lambda,p}|}{|\F_p^s|}\geq 
\sum_{p\in\mathcal P(r)\cap (r,L]}\frac{|\Omega_{\lambda,p}|}{|\F_p^s|}=T(\lambda)\Big(|\mathcal P(r)\cap (r,L]|\Big)+O_r\Big(\sum_{p\leq L}\frac 1{\sqrt{p}}\Big),
\]
where, in the error term, the sum extends to primes up to~$L$.
Moreover, summation by parts yields the following upper bound:
\[
\sum_{p\leq L}\frac 1{\sqrt{p}}\leq \frac{\sqrt L}{\log L}.
\]

Let~$\boldsymbol\lambda$ be a finite set of partitions of~$r$ and let~$T(\boldsymbol \lambda)=\sum_{\lambda\in\boldsymbol\lambda}T(\lambda)$.
By Proposition~\ref{prop:LS} we deduce, invoking the Prime Number Theorem (which implies~$|\mathcal P(r)\cap (r,L]|\gg L/\log L$ since~$L\geq r^2$), that we have for some constant~$C_r>0$ depending only on~$r$ and as soon as~$\sqrt L> \max(r,C_r/T(\boldsymbol\lambda))$,
\[
H(L,\boldsymbol\lambda)^{-1}\gg T(\boldsymbol\lambda)|\mathcal P(r)\cap (r,L]|-C_r \frac{\sqrt L}{\log L}\gg T(\boldsymbol\lambda)\frac L{\log L}\Big(1-\frac{C_r}{T(\boldsymbol\lambda)\sqrt L}\Big),
\]
with absolute implied constants. In turn we obtain
\[
|S(N,\boldsymbol\lambda,L)|\ll (\sqrt{2N+1}+L)^{2s}\cdot (T(\boldsymbol\lambda))^{-1}\frac{\log L}{L}\Big(1-\frac{C_r}{T(\boldsymbol\lambda)\sqrt L}\Big)^{-1}.
\]
Now for ~$N\gg \max(T(\boldsymbol\lambda)^{-2}r^4,T(\boldsymbol\lambda)^{-3}C_r^4)$, we choose~$L=T(\boldsymbol\lambda)\sqrt{2N+1}$. We obtain
\begin{equation}\label{eq:sieveupperbound}
|S(N,\boldsymbol\lambda,L)|\ll T(\boldsymbol\lambda)^{-2}(1+T(\boldsymbol\lambda))^{2s}N^{s-\frac 12}\log N.
\end{equation}

To conclude, we follow the end of the proof of~\cite{Kow08}*{Th.~4.2} by invoking~\cite{Gal72}*{Lemma p.~98}: no proper transitive subgroup of~$\mathfrak S_r$ contains both a transposition and a~$q$-cycle of prime length~$q>r/2$. Therefore it is enough to consider the following families of factorization patterns:
\begin{itemize}
    \item~$\lambda_{\mathrm{irr}}$ the trivial partition~$(a_1,\ldots,a_r)$ of~$r$ with~$a_r=1$, which detects the transitivity of the Galois action,
    \item~$\boldsymbol\lambda_{\mathrm{tr}}$ the set of partitions~$(a_1,\ldots,a_r)$ of~$r$ satisfying~$a_2=1$ and~$a_{2i}=0$ for~$i\neq 1$, which detects an element of the Galois group acting as a transposition,
    \item~$\boldsymbol\lambda_{\mathrm{prime}}$ the set of partitions~$(a_1,\ldots,a_r)$ of~$r$ with~$a_q=1$ for some prime number~$q> r/2$, which detects an element of the Galois group acting as a~$q$-cycle.
\end{itemize}
One has~$T(\lambda_{\mathrm{irr}})= r^{-1}$ and the asymptotics~(\cite{Gal72}*{p.~99}):
\[
T(\boldsymbol\lambda_{\mathrm{tr}})\sim \frac{\log 2}{\log r},\qquad
T(\boldsymbol\lambda_{\mathrm{prime}})\sim \frac{1}{\sqrt{2\pi r}}\qquad (r\to\infty) .
\]
In particular, $\max(T(\lambda_{\mathrm{irr}})^{-2},
T(\boldsymbol\lambda_{\mathrm{tr}})^{-2},
T(\boldsymbol\lambda_{\mathrm{prime}})^{-2})\ll r^2$. Combining this with~\eqref{eq:abstractupperbound} and~\eqref{eq:sieveupperbound}, the proof is complete.
\end{proof}

\subsection{Application to linear combinations of Tutte polynomials of connected matroids}
We restrict to the case~$s=2$: let~$T_{M_1}(x,y)$ and~$T_{M_2}(x,y)$ be distinct Tutte polynomials of connected matroids~$M_1, M_2$ of rank~$r\geq 2$ and size~$n\geq r+1$. By Corollary~\ref{cor:dMMN} (originally~\cite{MdMN} in this case)~$T_{M_1}$ and~$T_{M_2}$ are irreducible in~$\Z[x,y]$. The fact that~$T_{M_1}$ and~$T_{M_2}$ are distinct is equivalent to their~$\Q$-linear independence. From~\cite{GJS}*{\S 5} the conditions~$r\geq 2$ and~$n\geq r+1$ guarantee that the dimension of the~$\Q$-span of Tutte polynomials of connected matroids of size~$n$ and rank~$r$ is at least~$2$ and therefore one can indeed pick two linearly independent Tutte polynomials of connected matroids of size~$n$ and rank~$r$. Consequently~$T_{M_1}$ and~$T_{M_2}$ are distinct irreducible elements of~$\Z[x,y]$.  We will state and prove the main result of this section (Theorem~\ref{Th:Gallagher}) in the more general setting where we only require that~$T_1$ and~$T_2$ be copime squarefree in~$\Z[x,y]$ and monic of the same degree as~$x$-polynomials with coefficients in~$\Z[y]$.
 
We start by establishing the following result explaining that suitable specializations of~$T_1$ and~$T_2$ at integral values of~$y$ yield triples~$(F_0,F_1,F_2)$ satisfying assumptions (H1) and (H2) of~\S\ref{sec:sieve}.
 
\begin{proposition}\label{prop:affineplane} Let~$r\in\N_{\geq 2}$ and let~$T_1(x,y)$ and~$T_2(x,y)$ be elements of~$\Z[x,y]$ that are coprime, squarefree, and monic of the same degree~$r$ seen as~$x$-polynomials with coefficients in~$\Z[y]$. For big enough~$t\in \Z$ (depending only on~$(T_1,T_2)$) there exists a constant~$C(T_1,T_2,t)$ (depending only on~$(T_1,T_2,t)$) so that for every prime~$p\geq C(T_1,T_2,t)$ the polynomials~$T_{1,p}(x,t_p)$ and~$T_{2,p}(x,t_p)$ are squarefree and coprime (where~$t_p=t\bmod p$ and, for any~$g\in \Z[x,y]$, we define~$g_p$ to be the element of~$\F_p[x,y]$ obtained by reducing the coefficients of~$g$ modulo~$p$). Moreover, setting
 \[
 F_1(x,y)=T_1(x,y)\,,\qquad F_2(x,y)=T_1(x,y)-T_2(x,y)\,,\qquad F_0(x,y)=xF_2(x,y),
 \]
then for every~$p\geq C(T_1,T_2,t)$, the triple~$(F_{i,p}(x,t_p))_{0\leq i\leq 2}$ satisfies assumptions (H1) and (H2) of~\S\ref{sec:sieve}.
 \end{proposition}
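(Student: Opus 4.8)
The plan is to treat the two assertions separately. For the first, I would reduce squarefreeness and coprimality of the specializations to the non-vanishing of a discriminant and a resultant. Since $T_1,T_2$ are monic in $x$ and squarefree and coprime in $\Z[x,y]$, Gauss's lemma transfers squarefreeness of each $T_i$ and coprimality of the pair from $\Z[x,y]=\Z[y][x]$ to $\Q(y)[x]$; hence $\disc_x T_1,\disc_x T_2$ and $\res_x(T_1,T_2)$ are nonzero elements of $\Z[y]$. Because each $T_i$ is monic in $x$, the degree does not drop under $y\mapsto t$, so specialization commutes with $\disc_x$ and $\res_x$. I would then fix any integer $t$ outside the finite set of roots of the product $\disc_x T_1\cdot\disc_x T_2\cdot\res_x(T_1,T_2)$ (so all sufficiently large $t$ work) and take $C(T_1,T_2,t)$ larger than $r$ and than every prime divisor of the three nonzero integers $(\disc_x T_i)(t)$ and $\res_x(T_1,T_2)(t)$. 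For $p\geq C$ the reductions $T_{i,p}(x,t_p)$ stay monic of degree $r$, squarefree (nonzero discriminant mod $p$) and coprime (nonzero resultant mod $p$).

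For the second assertion, write $f_i=T_{i,p}(x,t_p)$ and $g=f_1-f_2$, so that $F_{1,p}=f_1$, $F_{2,p}=g$ and $F_{0,p}=xg$, where $f_1,f_2$ are monic of degree $r\geq 2$ and coprime; thus $f_1\neq f_2$ gives $g\neq 0$ with $\deg g<r$, and $\gcd(f_1,g)=\gcd(f_1,f_2)=1$. For (H1), the identity $\gcd(F_{0,p},F_{2,p})=g$ collapses the threefold gcd to $\gcd(g,f_1)=1$, so the family is relatively prime; and a relation $\alpha_0(xg)+\alpha_1 f_1+\alpha_2 g=0$ must have $\alpha_1=0$ (otherwise $g\mid f_1$, forcing $g$ constant and $\deg f_1\leq 1<r$) and then $g(\alpha_0 x+\alpha_2)=0$ forces $\alpha_0=\alpha_2=0$, giving linear independence. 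Since $s=2\leq r$, (H1) holds.

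The crux of (H2) is the identity $\gcd\big(F_{2,p},\,F_{1,p}-\beta_1 F_{2,p}\big)=\gcd(g,\,f_1-\beta_1 g)=\gcd(g,f_1)=1$ for every $\beta_1\in\F_p$. As $F_{0,p}-\beta_0 F_{2,p}=(x-\beta_0)g$, this coprimality collapses the gcd of the pair in (H2) to $\gcd\big(x-\beta_0,\,F_{1,p}-\beta_1 F_{2,p}\big)$, a divisor of the linear polynomial $x-\beta_0$; being squarefree it has no repeated irreducible factor at all and is \emph{a fortiori} normal in the sense of (H2), for every $(\beta_0,\beta_1)$. For the non-total-compositeness, I would argue by contradiction: a representation $F_{i,p}=D^\ell(g_i\circ\psi)$ with $\psi=N/D\in\F_p(x)$ in lowest terms, $\deg N>\deg D+1$ and $\max_j\deg g_j>1$ forces $\deg\psi=\deg N\geq 2$; since $g\neq 0$ gives $g_2\circ\psi\neq 0$, dividing the relations for $F_{0,p}=xg$ and $F_{2,p}=g$ yields $x=(g_0\circ\psi)/(g_2\circ\psi)\in\F_p(\psi)$, so $\F_p(x)=\F_p(\psi)$, contradicting $[\F_p(x):\F_p(\psi)]=\deg\psi\geq 2$.

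I expect the only genuinely delicate point to be the first assertion: squarefreeness and coprimality of bivariate polynomials must really survive the univariate specialization, which is why I route the argument through $\disc_x$, $\res_x$ and Gauss's lemma rather than reducing naively. The verifications of (H1) and (H2) are then formal manipulations of gcd's, with the entire non-total-compositeness clause resting on the deliberate choice $F_0=xF_2$: this encodes the coordinate $x$ as the ratio $F_0/F_2$ and thereby blocks any factorization of the pencil through a rational map of degree $\geq 2$.
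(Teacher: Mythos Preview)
Your proof is correct and follows essentially the same route as the paper's. The only cosmetic difference is that for the first assertion the paper bundles everything into the single quantity $\disc_x(T_1T_2)$ (whose non-vanishing encodes simultaneously the squarefreeness of each $T_i$ and their coprimality), whereas you track $\disc_x T_1$, $\disc_x T_2$ and $\res_x(T_1,T_2)$ separately; since $\disc_x(T_1T_2)=\pm\disc_x(T_1)\,\disc_x(T_2)\,\res_x(T_1,T_2)^2$, the two formulations are equivalent. Your verifications of (H1), the normality part of (H2), and the non-total-compositeness via $x=F_0/F_2\in\F_p(\psi)$ match the paper's arguments essentially line for line.
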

 
 \begin{proof}
 
We consider~$P(x,y)=T_1(x,y)T_2(x,y)$, the product of~$T_1$ and~$T_2$. Since~$T_1$ and~$T_2$ are distinct and irreducible in the UFD~$\Z[x,y]$, the polynomial~$P(x,y)$ is squarefree in~$\Z[x,y]$ (and also in~$(\Q(y))[x]$) and so its~$x$-discriminant (an element of~$\Z[y]$) is non zero. Therefore for big enough~$t\in \Z$ (depending only on~$(T_1,T_2)$), the specialization~$P(x,t)$ is squarefree; in other words~$T_1(x,t)$ and~$T_2(x,t)$ are coprime and squarefree. For every such~$t$ the discriminant of the~$x$-polynomial~$T_1(x,t)T_2(x,t)$ is non zero and therefore for every big enough prime~$p$ (depending only on~$(T_1,T_2,t)$), this discriminant is non zero modulo~$p$, meaning that~$T_{1,p}(x,t_p)$ and~$T_{2,p}(x,t_p)$ are squarefree and coprime.
 
Below we will write~$F_{i,p}$ as shorthand for~$F_{i,p}(x,t_p)$ ($0\leq i\leq 2$) for brevity. We can now check that assumptions (H1) and (H2) are satisfied by the relevant triples~$(F_{i,p})_{0\leq i\leq 2}$.
 
 \begin{itemize}
     \item[(H1)]  Since~$T_{1,p}(x,t_p)$ and~$T_{2,p}(x,t_p)$ are coprime, then~$\mathrm{gcd}(F_{1,p},F_{2,p})=1$ and therefore~$(F_{i,p})_{0\leq i\leq 2}$ is a family of relatively prime polynomials.
     
     Let us check that the linear independence assumption is satisfied. Since~$F_{1,p}$ and~$F_{2,p}$ are coprime, they are not colinear. Assume by contradiction that for some~$\alpha,\beta\in\F_p$ one has
 \[
 F_{0,p}=\alpha F_{1,p}+\beta F_{2,p}.
 \] 
 Recombining, we obtain
 \[
 (x-(\alpha+\beta))T_{1,p}(x,t_p)=(x-\beta)T_{2,p}(x,t_p).
 \] 
 This contradicts the fact that~$T_{1,p}(x,t_p)$ and~$T_{2,p}(x,t_p)$ are coprime, squarefree, and have degree~$r\geq 2$.
     \item[(H2)]
     We first check the \emph{normality} assumption. For any~$(\beta_0,\beta_1)\in\F_p^2$, we have
 \begin{align*}
\mathrm{gcd}(F_{0,p}-\beta_0 F_{2,p},F_{1,p}-\beta_1 F_{2,p})&=\mathrm{gcd}((x-\beta_0)F_{2,p},F_{1,p}-\beta_1 F_{2,p})\\
&=
\mathrm{gcd}(x-\beta_0,F_{1,p}-\beta_1 F_{2,p}),
\end{align*}
where, for the last step, we use the fact that~$F_{2,p}$ and~$F_{1,p}$ are coprime. This implies that the gcd considered is normal. 

\medskip

Finally, assume by contradiction that~$(F_{i,p})_{0\leq i\leq 2}$ is totally composite, then in particular
there exists polynomials~$g_0$ and~$g_2$ in~$\F_p[x]$ and a rational function~$\psi\in\F_p[x]$ of positive
degree such that
 \[
 F_{0,p}=xF_{2,p}=D^\ell g_0(\psi)\,,\qquad F_{2,p}=D^\ell g_2(\psi).
 \]
This leads to
 \[
 x=(h\circ \psi)(x)
 \]
where~$h=g_0/g_2$. Taking degrees, we obtain~$1=(\deg h)(\deg \psi)$. However both factors on the right hand side are integers and~$\deg \psi\geq 2$ by assumption; a contradiction.
\end{itemize}

 \end{proof}
 
 We are now ready to draw the following consequence of Theorem~\ref{Th:Gallagher} and Proposition~\ref{prop:affineplane}. This establishes in particular that, generically degree one~$\Z[x]$-linear combinations of two linearly independent Tutte polynomials of connected~$(n,r)$-matroids have maximal Galois group over~$\Q(y)$.
 
 \begin{theorem}
Let~$T_1(x,y)$ and~$T_2(x,y)$ be elements of~$\Z[x,y]$ that are coprime, squarefree, and monic of the same degree~$r\geq 2$ seen as~$x$-polynomials with coefficients in~$\Z[y]$ (for example one can take~$T_1(x,y)$ and~$T_2(x,y)$ to be distinct Tutte polynomials of connected~$(n,r)$-matroids with~$r\geq 2$ and~$n\geq r+1$). Then one has
 \[
\frac{\Big|\big\{(n_1,n_2)\in \{-N,\ldots,N\}^{2}\colon |G_{\Q,y}\big((x+n_1)T_{1}-(x+n_2)T_2\big)|<r! \big\}\Big|}{(2N+1)^2}\ll r^2 \frac{\log N}{\sqrt N}, 
\]
for all~$N\gg _r 1$ and with an absolute implied constant.
 \end{theorem}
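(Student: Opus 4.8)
The plan is to reduce the statement to the general sieve bound of Theorem~\ref{Th:Gallagher}, applied to a single well-chosen univariate specialization of the pencil, using Proposition~\ref{prop:affineplane} to verify the sieve hypotheses and a standard specialization inclusion to pass back from the specialized univariate Galois group to the bivariate group~$G_{\Q,y}$. With~$F_1=T_1$,~$F_2=T_1-T_2$ and~$F_0=xF_2$ as in Proposition~\ref{prop:affineplane}, the elementary identity
\[
(x+n_1)T_1-(x+n_2)T_2=F_0+(n_1-n_2)F_1+n_2F_2
\]
shows that, after the unimodular substitution~$(m_1,m_2)=(n_1-n_2,n_2)$, the polynomial under study is exactly~$F_0+m_1F_1+m_2F_2$ with~$s=2$. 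This substitution is a bijection of~$\Z^2$ sending~$\{-N,\dots,N\}^2$ into~$[-2N,2N]^2$, which is the only distortion of the counting box that will occur.

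First I would fix, once and for all, an integer~$t$ as produced by Proposition~\ref{prop:affineplane}: large enough that~$T_1(x,t),T_2(x,t)$ are squarefree and coprime over~$\Q$, and such that for every prime~$p>C(T_1,T_2,t)$ the reduced triple~$(F_{i,p}(x,t_p))_{0\le i\le 2}$ satisfies hypotheses (H1) and (H2) of~\S\ref{sec:sieve}. Choosing~$\mathcal P(r)$ to be the primes~$p>\max(r,C(T_1,T_2,t))$ --- a set of density one, in particular of positive density bounded below --- the univariate triple~$(F_0(x,t),F_1(x,t),F_2(x,t))\in\Z[x]^3$ is then an admissible input for Theorem~\ref{Th:Gallagher}: here~$F_1(x,t)=T_1(x,t)$ is monic of degree~$r$ and~$\deg_x F_2(x,t)<r$ because the leading terms of~$T_1$ and~$T_2$ cancel, so that~$r=\max(\deg F_0,\deg F_1)$ with~$\deg F_2<r$. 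Theorem~\ref{Th:Gallagher} with~$s=2$ therefore bounds the number of~$(m_1,m_2)$ for which~$\gal_\Q\bigl((F_0+m_1F_1+m_2F_2)(x,t)\bigr)$ is not all of~$\mathfrak S_r$.

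The bridge to~$G_{\Q,y}$ is the specialization inclusion: whenever the specialized polynomial~$P_{n_1,n_2}(x,t)$ is separable of degree~$r$, the decomposition group at the place~$y=t$ identifies~$\gal_\Q\bigl(P_{n_1,n_2}(x,t)\bigr)$ with a subgroup of~$G_{\Q,y}\bigl(P_{n_1,n_2}\bigr)$ inside~$\mathfrak S_r$, where~$P_{n_1,n_2}=(x+n_1)T_1-(x+n_2)T_2$. In particular~$|G_{\Q,y}(P_{n_1,n_2})|<r!$ forces~$|\gal_\Q(P_{n_1,n_2}(x,t))|<r!$, so that outside the locus where~$P_{n_1,n_2}(x,t)$ fails to be separable of degree~$r$ one has the inclusion of bad sets
\[
\bigl\{(n_1,n_2):|G_{\Q,y}(P_{n_1,n_2})|<r!\bigr\}\subseteq\bigl\{(n_1,n_2):|\gal_\Q(P_{n_1,n_2}(x,t))|<r!\bigr\}.
\]
Both the degree-drop locus (where the~$x^r$-coefficient of the combination, an explicit affine function of~$n_1-n_2$, vanishes) and the non-separability locus (a nonzero discriminant condition in~$(n_1,n_2)$, nonzero thanks to the coprimality and squarefreeness of~$T_1,T_2$ at~$y=t$) are thin, containing~$O(N)$ lattice points and hence contributing~$O(1/N)$ after normalization. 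Transporting the right-hand side through~$(n_1,n_2)\mapsto(m_1,m_2)$, enlarging the box to~$[-2N,2N]^2$, and applying Theorem~\ref{Th:Gallagher} with~$s=2$ and~$N$ replaced by~$2N$, its cardinality is~$\ll(4N+1)^2\,r^2(1+1/\log r)^4\frac{\log(2N)}{\sqrt{2N}}$. Since~$(1+1/\log r)^4=O(1)$ for~$r\ge2$ and~$(4N+1)^2\asymp(2N+1)^2$, dividing by~$(2N+1)^2$ gives the asserted bound~$\ll r^2\frac{\log N}{\sqrt N}$ for~$N\gg_r1$.

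The step I expect to be the main obstacle is justifying the specialization inclusion uniformly in~$(n_1,n_2)$: one must check that a single integer~$t$ serves as an unramified, degree-preserving specialization for the entire pencil away from a genuinely thin exceptional set, which amounts to verifying that the degree-drop and branch loci are each cut out by a polynomial in~$(n_1,n_2)$ that does not vanish identically. This is exactly where the coprimality and squarefreeness of~$T_1,T_2$ --- transmitted to the fibre~$y=t$ by Proposition~\ref{prop:affineplane} --- together with the special shape~$F_0=xF_2$ are used; the remainder is the bookkeeping of feeding the univariate data into Theorem~\ref{Th:Gallagher} and tracking the sheared counting box.
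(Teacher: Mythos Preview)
Your proposal is correct and follows essentially the same route as the paper: set up~$F_0=x(T_1-T_2)$,~$F_1=T_1$,~$F_2=T_1-T_2$, specialize at a suitable~$y=t$ furnished by Proposition~\ref{prop:affineplane}, feed the resulting univariate triple into Theorem~\ref{Th:Gallagher} with~$s=2$, and lift back via specialization. You are in fact more explicit than the paper about the unimodular change of variables~$(n_1,n_2)\mapsto(n_1-n_2,n_2)$ and the resulting box dilation, which the paper leaves implicit.

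One simplification you could make: your separate treatment of the degree-drop and non-separability loci is unnecessary. If~$P_{n_1,n_2}(x,t)$ has~$x$-degree~$<r$ or fails to be separable, then automatically~$|\gal_\Q(P_{n_1,n_2}(x,t))|<r!$, so these~$(n_1,n_2)$ are already absorbed by the sieve count from Theorem~\ref{Th:Gallagher}. Thus the inclusion
\[
\bigl\{(n_1,n_2):|G_{\Q,y}(P_{n_1,n_2})|<r!\bigr\}\subseteq\bigl\{(n_1,n_2):|\gal_\Q(P_{n_1,n_2}(x,t))|<r!\bigr\}
\]
holds without exception, and your ``main obstacle'' dissolves: you never need to argue that the discriminant-in-$(n_1,n_2)$ is not identically zero.
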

 
 \begin{proof}
 As in Proposition~\ref{prop:affineplane} set~$F_0(x,y)=x(T_1(x,y)-T_2(x,y))$,~$F_1(x,y)=T_1(x,y)$, and~$F_2(x,y)=T_1(x,y)-T_2(x,y)$. Picking~$t$ big enough (again in the sense of Proposition~\ref{prop:affineplane}) the assumptions of Theorem~\ref{Th:Gallagher} (with~$s=2$) are satisfied for the choice~$\mathcal P(r)=\{p\text{ prime}\colon p\geq\max (C(T_1,T_2,t),r+1)\}$ which depends on~$(T_1,T_2,t)$ only and has density~$1$. We conclude by applying Theorem~\ref{Th:Gallagher} and a specialization argument (Proposition~\ref{prop:spec}, where one additionally notes that if~$f(x,y)\in \Z[x,y]$ has~$x$-degree~$d$ and if~$t,p\in \Z$ are such that~$p$ is prime and~$f(x,t)\bmod p$ is irreducible of degree~$d$ in~$\F_p[x]$, then~$f(x,y)$ is irreducible seen as an~$x$-polynomial with coefficients in~$\Q(y)$).
 \end{proof}
 
\section{Monodromy computations for some ranked sets and specific families of \texorpdfstring{$2$}{2}-connected graphs}\label{sec:fam}

In this section we first consider possible extensions of Conjecture~\ref{conj:BCM} to general ranked sets, and
next we prove that the conjecture holds for some families of connected matroids.

\subsection{Examples of monodromy group of Tutte polynomials of ranked sets}\label{sec:4.1}

In order to prove that Conjecture~\ref{conj:BCM} holds in a number of cases, we will use the same strategy as
for the proof of Theorem~\ref{Th:Gallagher} by showing that the Galois group considered, seen as a permutation
group, contains elements having a certain cycle type. Typically the final step of the proof relies on a
classical specialization argument (also used in the proof of Theorem~\ref{Th:Gallagher}). We state the
version, restricted to a UFD, of~\cite{BCM}*{Prop.~4}, itself a simplified form of~\cite{Lang}*{VII, Th.~2.9}).

\begin{proposition}\label{prop:spec}
Let~$R$ be a UFD equipped with a ring homomorphism~$\psi\colon R\to L$ to a field~$L$. Let~$f(X)\in R[X]$ be monic irreducible and assume that~$\psi(f)$ (the element of~$L[X]$ obtained by mapping~$\psi$ to the coefficients of~$f$) is separable. Then~$\gal_L(\psi(f))$ is a subgroup of~$\gal_k(f)$, where~$k$ denotes the field of fractions of~$R$. 
\end{proposition}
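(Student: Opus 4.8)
The plan is to produce an explicit injective group homomorphism $\gal_L(\psi(f))\hookrightarrow\gal_k(f)$ by reducing the roots of $f$ along an extension of $\psi$ to an integral closure. First I would record a separability reduction that legitimizes the whole setup. Since $f$ is monic, $\psi(f)$ is monic of the same degree, and the discriminant commutes with $\psi$, so $\psi(\disc(f))=\disc(\psi(f))$. As $\psi(f)$ is separable this is nonzero in $L$, whence $\disc(f)\neq 0$ in $R$ and $f$ is separable over $k$. Therefore the splitting field $\Omega$ of $f$ over $k$ is Galois, and $G\coloneqq\gal_k(f)=\gal(\Omega/k)$ acts faithfully on the set of distinct roots $\alpha_1,\dots,\alpha_n\in\Omega$ of $f$.

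Next I would pass to the integral closure $A$ of $R$ in $\Omega$; it is stable under $G$ and contains all the $\alpha_i$ (which are integral over $R$ because $f$ is monic). Fixing an algebraic closure $\overline{L}$ of $L$ and regarding $\psi$ as a map $R\to\overline{L}$, I extend it to a ring homomorphism $\Psi\colon A\to\overline{L}$, which is possible since $A$ is integral over $R$. Setting $\beta_i\coloneqq\Psi(\alpha_i)$ and applying $\Psi$ to $f=\prod_i(X-\alpha_i)$ gives $\psi(f)=\prod_i(X-\beta_i)$; separability of $\psi(f)$ forces the $\beta_i$ to be pairwise distinct, so they are exactly the roots of $\psi(f)$ and $\Psi$ restricts to a bijection $\{\alpha_i\}\to\{\beta_i\}$. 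Let $M=L(\beta_1,\dots,\beta_n)\subseteq\overline{L}$ be the splitting field of $\psi(f)$ over $L$, so that $\gal_L(\psi(f))=\gal(M/L)\eqqcolon\bar{G}$.

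The heart of the argument is the construction of the embedding. Given $\tau\in\bar{G}$, I extend it to an automorphism $\tilde{\tau}$ of $\overline{L}$ fixing $L$; then $\tilde{\tau}\circ\Psi\colon A\to\overline{L}$ again extends $\psi$, since $\tilde{\tau}$ fixes $\psi(R)\subseteq L$. I would then invoke transitivity of $G$ on the homomorphisms $A\to\overline{L}$ extending $\psi$ to produce $\sigma\in G$ with $\Psi\circ\sigma=\tilde{\tau}\circ\Psi$. On roots this reads $\sigma(\alpha_i)=\Psi^{-1}(\tau(\beta_i))$, which depends only on $\tau$ (not on the chosen $\tilde{\tau}$, as $\beta_i\in M$) and determines $\sigma$ uniquely because $G$ acts faithfully on $\{\alpha_i\}$ and the $\beta_i$ are distinct. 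Setting $\Phi(\tau)\coloneqq\sigma$, I would check directly that $\Phi\colon\bar{G}\to G$ is a homomorphism, since $\Psi\sigma_1\sigma_2=\tilde{\tau}_1\Psi\sigma_2=\tilde{\tau}_1\tilde{\tau}_2\Psi$, and that it is injective, since $\Phi(\tau)=\mathrm{id}$ forces $\tau$ to fix every $\beta_i$ and hence $\tau=\mathrm{id}$. This realizes $\gal_L(\psi(f))$ as a subgroup of $\gal_k(f)$.

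The main obstacle is precisely the transitivity step: the rest is formal bookkeeping with the bijection $\alpha_i\leftrightarrow\beta_i$, but the existence of $\sigma\in G$ realizing $\tilde{\tau}\circ\Psi$ is the substantive content, equivalent to the assertion that the permutation of the roots induced by $\tau$ is already realized inside $G$. It rests on the standard theory of Galois actions on integral extensions of an integrally closed domain, namely that $G$ acts transitively on the primes of $A$ lying over $\ker\psi$ together with the surjectivity of the decomposition group onto the residue-field Galois group in the unramified case. This is where the hypotheses that $R$ be a UFD (hence integrally closed) and that $\psi(f)$ be separable are genuinely used.
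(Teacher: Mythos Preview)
The paper does not actually prove this proposition; it is stated with a citation to \cite{BCM}*{Prop.~4} and \cite{Lang}*{VII, Th.~2.9} and used as a black box thereafter. Your argument is correct and is essentially the classical proof underlying Lang's theorem: pass to the integral closure, extend $\psi$, use separability of $\psi(f)$ to obtain a bijection between the two root sets, and lift each residue automorphism through the decomposition group.

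Two minor clarifications. First, the surjection $D_{\mathfrak P}\twoheadrightarrow\Aut(\kappa(\mathfrak P)/\kappa(\mathfrak p))$ holds in general once $R$ is integrally closed; no unramifiedness hypothesis is needed there. The role of separability of $\psi(f)$ is rather to force the inertia group to be trivial (any $\sigma$ acting trivially on $A/\mathfrak P$ fixes each $\beta_i$, hence each $\alpha_i$ by your bijection, hence $\sigma=\mathrm{id}$), which is exactly what makes your $\sigma$ unique and $\Phi$ well defined. Second, since $\ker\psi$ need not be maximal in $R$, one should strictly localize $R$ at $\ker\psi$ before invoking the decomposition--inertia formalism; this is harmless because localizations of UFDs remain UFDs (hence integrally closed), and the $G$-action on extensions is unaffected. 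With these tweaks your sketch is complete.
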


\begin{remark}\label{rem:basefieldext}
Let~$K/k$ be an algebraic field extension. Let~$T(x,y)\in k[x,y]$ be squarefree and non-constant as an~$x$-polynomial with coefficients in~$k[y]$. Fix an algebraic closure~$\overline{K(y)}$ of~$K(y)$; this is also an algebraic closure of~$k(y)$. Let~$(\alpha_i)_{1\leq i\leq r}$ be the roots of~$T$ in~$\overline{K(y)}$. Then~$k(y)((\alpha_i)_i)$ (resp.~$K(y)((\alpha_i)_i)$) is a splitting field of~$T$ over~$k(y)$ (resp.~$K(y)$) and one has an injective group morphism
\begin{equation*}
\setlength{\arraycolsep}{0pt}
\renewcommand{\arraystretch}{1.2}
\begin{array}{ c c c }
  \gal_{K(y)}(T) & {} \longrightarrow {} & \gal_{k(y)}(T)\\
  \sigma & {} \longmapsto {} & \sigma_{|k(y)((\alpha_i)_i)}.
\end{array}
\end{equation*}
As a consequence, maximality of~$\gal_{K(y)}(T)$ (meaning it has order~$r!$) implies  maximality of~$\gal_{k(y)}(T)$. Moreover, if~$K/k$ has finite degree, then~$\gal_{k(y)}(T)\simeq \gal_{K(y)}(T)$. Therefore, in the sequel, even though we fix, at times, the field of constants that we work over (\emph{e.g.}~$\Q$ or~$\C$), one should keep in mind that under the above assumptions the statements still hold after a finite constant field extension or by considering a subfield over which the base field is algebraic. 
\end{remark}

In the remainder of \S\ref{sec:4.1}, we give examples showing that Conjecture~\ref{conj:BCM} extends to some but not all general rank functions.

\begin{remark}
To give a first low degree example in the case~$R=\Z$, we go back to the first example in Remark~\ref{rem:exGordon}. Specializing the Tutte polynomial~$T_S(x,y)=x^3+2x^2+y^2+3xy$ (see~\cite{gordon15}*{p.~23}) at~$y=1$ and reducing modulo~$5$, we obtain the product of irreducibles~$(x+1)(x^2+3)$ in~$\F_5[x]$. We deduce that the Galois group of~$T_S(x,y)$ over~$\Q(y)$ is isomorphic to~$\mathfrak S_3$ (up to isomorphism, it is the unique transitive permutation group of degree~$3$ that contains a transposition).
\end{remark}

The first example below corresponds to a rank function assuming two values only. In this case the Galois group is not maximal (generically it has index~$\geq 3$ inside the relevant symmetric group). The second example corresponds to a rank function assuming three values. In this case we exhibit subfamilies for which we show that the Galois group is~maximal. 

\subsubsection{A rank function assuming two values}\label{sec:ex1} Example~\ref{ex:1_nonneg_dual_neg} gives an example of a rank function whose dual takes negative values, and whose corank-nullity polynomial is the polynomial
\[T_{n,r}(x,y)=(y-1)^{n-r}+(y^{n}-(y-1)^{n})(x-1)^r,\]
where~$n\geq r\geq 1$.
Let~$R$ be a UFD and let~$\varphi\colon \Z\to R$ be the natural ring homomorphism the kernel of which is generated by the characteristic of~$R$. Applying~$\varphi$ to the coefficients, we consider the  polynomial~$T$ as an element of~$R[x,y]$. We let~$k$ be the fraction field of~$R$.

\begin{proposition}\label{prop:non-max}
 As a polynomial in~$x$,~$T_{n,r}(x,y)$ is irreducible over~$k(y)$. Assuming moreover that the characteristic of~$R$ does not divide~$r$, the Galois group~$G_{k,y}(T_{n,r})$ has order at most~$r\varphi(r)$. If~$r\geq 4$, then the index of~$G_{k,y}(T_{n,r})$ in~$\mathfrak S_r$ is at least~$3$. 
 In particular~$G_{k,y}(T_{n,r})$ is neither isomorphic to~$\mathfrak S_r$ nor to~$\mathfrak A_r$ as soon as~$r\geq 4$.  
\end{proposition}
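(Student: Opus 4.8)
The plan is to recognize $T_{n,r}$, viewed as a polynomial in $x$, as a binomial after the substitution $X=x-1$, and then apply radical (Kummer-type) extension theory. Writing $a(y)=y^n-(y-1)^n$ and $b(y)=(y-1)^{n-r}$, one has $T_{n,r}=a(y)X^r+b(y)$, so over $k(y)$ the splitting field --- and hence the group $G\coloneqq G_{k,y}(T_{n,r})$ --- coincides with that of the binomial $X^r-\gamma$, where $\gamma=-b(y)/a(y)\in k(y)$: translating $x\mapsto x-1$ and scaling by the unit $a(y)\in k(y)^\times$ alters neither the roots nor the Galois action.

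First I would establish irreducibility. The key point is that $a(y)=y^n-(y-1)^n$ has roots $\rho=\zeta/(\zeta-1)$ for $\zeta$ a nontrivial $n$-th root of unity, none of which equals~$1$; at such a place $\rho$ one has $v_\rho(b)=0$ while $v_\rho(a)$ equals the multiplicity of $\rho$ in $a$ (which is $1$ when $\mathrm{char}(k)\nmid n$), so $v_\rho(\gamma)=-1$, coprime to~$r$. A ramification argument then forces irreducibility: if $\alpha^r=\gamma$ and $w$ extends $v_\rho$ to $k(y)(\alpha)$ with ramification index $e$, then $r\,w(\alpha)=w(\gamma)=-e$ gives $r\mid e$; since $e\leq[k(y)(\alpha):k(y)]$ and this degree divides~$r$, it must equal~$r$, whence $X^r-\gamma$ is irreducible over $k(y)$. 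In characteristics dividing~$n$ the root $\rho$ acquires multiplicity a power of the characteristic, so $v_\rho(\gamma)$ is a power of $\mathrm{char}(k)$ up to sign and remains coprime to~$r$ precisely when $\mathrm{char}(k)\nmid r$; I expect this valuation bookkeeping, together with the inseparable case $\mathrm{char}(k)\mid r$ (where one must instead invoke a Capelli-type criterion), to be the main technical obstacle.

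Next, assuming $\mathrm{char}(R)\nmid r$ (so that $X^r-\gamma$ is separable and a primitive $r$-th root of unity $\zeta$ exists), I would bound $|G|$. The roots of $X^r-\gamma$ are $\zeta^j\alpha$ for $j\in\Z/r\Z$, and any $\sigma\in G$ satisfies $\sigma(\zeta)=\zeta^a$ and $\sigma(\alpha)=\zeta^b\alpha$ for some $a\in(\Z/r\Z)^\times$ and $b\in\Z/r\Z$, so $\sigma$ acts on indices by $j\mapsto aj+b$. This yields an injection of $G$ into the affine group of $\Z/r\Z$, of order $r\varphi(r)$ (here $\varphi(r)$ denotes Euler's totient, the number of residues coprime to~$r$), whence $|G|\leq r\varphi(r)$.

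Finally, the index bound and the conclusion are a short arithmetic check. As $G$ is a transitive subgroup of $\mathfrak S_r$ by irreducibility, its index is $r!/|G|\geq r!/(r\varphi(r))$. For $r=4$ this equals $24/8=3$, and for $r\geq 5$ it is at least $r!/(r(r-1))=(r-2)!\geq 6$; hence the index is at least~$3$ for every $r\geq 4$. Since $\mathfrak A_r$ and $\mathfrak S_r$ have index~$2$ and~$1$ respectively in $\mathfrak S_r$, and an abstract isomorphism $G\cong\mathfrak A_r$ or $G\cong\mathfrak S_r$ would pin $|G|$ to $r!/2$ or $r!$, the index-at-least-$3$ bound rules out both, completing the proof.
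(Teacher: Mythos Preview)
Your approach is correct and essentially the same as the paper's: both reduce $T_{n,r}$ to a pure binomial $X^r-\gamma$ via the shift $X=x-1$ and a rescaling, and then embed the Galois group into the affine group of $\Z/r\Z$ of order $r\varphi(r)$, followed by the same arithmetic index check. The only minor variation is in the irreducibility step---the paper invokes Lang's criterion after observing that $(y-1)^n-y^n$ is squarefree (so the constant term is not a proper power), whereas you reach the same conclusion by a direct valuation/ramification argument at a root of $y^n-(y-1)^n$.
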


\begin{proof}
By invariance of the Galois group structure (and in particular reducibility properties) when factoring out a non-zero constant term, taking the reciprocal or performing linear transformations, we can replace~$T_{n,r}(x,y)$ by 
\[
U_{n,r}(x,y)=x^r-\frac{(y-1)^{n}-y^{n}}{(y-1)^{n-r}}.
\]
The polynomial~$U_{n,r}$ has coefficients in~$k(y)$ and its constant term is not a constant multiple of any~$g(y)^\ell$ for some~$g(y)\in k(y)$ and~$\ell\in\N_{\geq 2}$. Indeed the numerator of such a~$g(y)^\ell$ has multiple roots, which is not the case for~$(y-1)^{n}-y^{n}$ as one checks by computing the~$y$-derivative. Applying~\cite{Lang}*{Chap.~6, Th.~9.1}, we deduce that~$U_{n,r}$ is irreducible over~$k(y)$.

Furthermore~\cite{Lang}*{Chap.~6, \S9, Ex.~2} constructs, under the stated assumptions, an injective morphism from~$G_{k,y}(U_{n,r})$ to
\[
\left\{\begin{pmatrix}
a & b\\
0 & 1
\end{pmatrix}\colon b\in \Z/r\Z,\, a\in (\Z/r\Z)^\times\right\},
\]
which concludes the proof.
\end{proof}

\subsubsection{A rank function assuming three values}\label{sec:ex2} 
The ranked set of Example~\ref{ex:2_nonneg_dual_neg} has corank-nullity polynomial equal to
\[T_{n}(x,y)=(x-1)^{n}+(x-1)^{n-1}\left(\frac{y^n-1-(y-1)^n}{y-1}\right)+1,\]
where~$n\geq 1$.

Setting~$X=x-1$ we obtain 
\[
T_n(X,y)=X^n+\left(\frac{y^n-1-(y-1)^n}{y-1}\right)X^{n-1}+1.
\]
For a suitable factorization of~$n$, the following statement asserts the maximality of the Galois group of~$T_n$ over~$\Q(y)$.
\begin{proposition}\label{prop:p1p2}
Let~$n=p_1p_2$ be the product of two odd primes such that the (multiplicative) order of~$2$ modulo~$p_i$ does not divide~$p_j-1$ (for any~$i\neq j$). Then the Galois group of the splitting field of~$T_n(X,2)$ over~$\Q$ is isomorphic to~$\mathfrak S_n$ and therefore~$G_{\Q,y}(T_n(x,y))\simeq \mathfrak S_n$.
\end{proposition}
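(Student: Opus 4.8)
The strategy is to prove that the Galois group over $\Q$ of the single specialized polynomial $T_n(X,2)$ is the full symmetric group $\mathfrak S_n$, and then invoke Proposition~\ref{prop:spec} (the specialization argument) to transfer maximality up to $G_{\Q,y}(T_n(x,y))$. So I would begin by computing the coefficient $\frac{y^n-1-(y-1)^n}{y-1}$ at $y=2$: here $y^n-1-(y-1)^n = 2^n - 1 - 1 = 2^n-2$, and dividing by $y-1=1$ gives $2^n-2$. Thus the polynomial to study is
\[
T_n(X,2)=X^n+(2^n-2)X^{n-1}+1.
\]
First I would confirm that this is separable (its discriminant is nonzero), so that Proposition~\ref{prop:spec} applies, and that it is irreducible over $\Q$, giving transitivity of the Galois group.

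**Detecting the required cycle types.** To force the Galois group to be all of $\mathfrak S_n$, I would use the same philosophy as in the proof of Theorem~\ref{Th:Gallagher}: show that $\gal_\Q(T_n(X,2))$, as a transitive subgroup of $\mathfrak S_n$, contains both a transposition and a $p$-cycle (or more precisely elements whose cycle structure, together with transitivity, forces the full symmetric group via a primitivity/Jordan-type argument). The hypothesis $n=p_1p_2$ with the stated order-of-$2$ condition is plainly designed to control the factorization of $T_n(X,2)$ modulo small primes: reducing modulo a prime $\ell$ and reading off the factorization pattern yields, by the Dedekind/Frobenius correspondence, an element of the Galois group with the corresponding cycle type. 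The arithmetic condition that the order of $2$ modulo $p_i$ does not divide $p_j-1$ should be exactly what is needed to produce, via reduction modulo suitable primes, a $p_1$-cycle and a $p_2$-cycle (arising from irreducible factors of those degrees over the residue field), while transitivity (irreducibility over $\Q$) supplies an $n$-cycle or otherwise rules out imprimitive blocks. Since $\gcd(p_1,p_2)=1$, a transitive group containing both a $p_1$-cycle and a $p_2$-cycle acting on $p_1p_2$ letters is forced to be primitive, and one then applies a classical theorem (e.g.\ that a primitive group containing a prime-order cycle fixing enough points is $\mathfrak A_n$ or $\mathfrak S_n$), finally distinguishing $\mathfrak S_n$ from $\mathfrak A_n$ by exhibiting an odd permutation (a transposition, i.e.\ a quadratic non-residue in the factorization pattern modulo some prime, equivalently a prime $\ell$ dividing the discriminant to odd order).

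**Main obstacle.** The delicate step will be the precise group-theoretic extraction: translating ``order of $2$ modulo $p_i$ does not divide $p_j-1$'' into a guarantee that reduction modulo a well-chosen prime produces an irreducible factor of degree exactly $p_i$ (hence a $p_i$-cycle in the Galois group), and simultaneously producing a transposition. Controlling the factorization of $X^n+(2^n-2)X^{n-1}+1$ modulo primes is not entirely routine because the polynomial is sparse and its roots are governed by the two-variable structure; the cleanest route is likely to exploit the near-binomial shape to relate its splitting field to cyclotomic-type extensions where the order of $2$ modulo $p_i$ dictates residue degrees. The transitivity-plus-two-coprime-cycles argument then reduces to citing the appropriate primitivity and Jordan-type classification results, so I expect the analytic/number-theoretic bookkeeping on the factorization patterns, rather than the final group theory, to be where the real work lies.
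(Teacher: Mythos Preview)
You have correctly computed $T_n(X,2)=X^n+(2^n-2)X^{n-1}+1$ and correctly identified that the final step is an application of Proposition~\ref{prop:spec}. However, your plan for the core of the argument misreads the role of the hypothesis. The condition that the order of~$2$ modulo~$p_i$ does not divide~$p_j-1$ is \emph{not} designed to control factorization patterns of~$T_n(X,2)$ modulo auxiliary primes, and there is no evident mechanism for extracting a~$p_1$-cycle or a~$p_2$-cycle from it via Dedekind--Frobenius; the hypothesis says nothing about primes other than~$p_1,p_2$ themselves. Its actual purpose is purely arithmetic: by Fermat's little theorem,
\[
2^n-2=(2^{p_i})^{p_j}-2\equiv 2\bigl(2^{p_j-1}-1\bigr)\pmod{p_i},
\]
and the order hypothesis says precisely that~$2^{p_j-1}\not\equiv 1\pmod{p_i}$, so~$p_i\nmid 2^n-2$ for~$i=1,2$. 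In other words the hypothesis is exactly the statement~$\gcd(n,2^n-2)=1$.

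Once you have this coprimality, the argument collapses to two citations. Since~$n=p_1p_2\geq 15$, irreducibility of the trinomial~$X^n+(2^n-2)X^{n-1}+1$ follows from Harrington~\cite{Har}*{Th.~1}. Then Osada's theorem on Galois groups of trinomials~$X^n+aX^l+b$~\cite{Osa}*{Th.~1}, whose hypothesis here is supplied by irreducibility together with~$\gcd(n,2^n-2)=1$, yields~$\gal_\Q(T_n(X,2))\simeq\mathfrak S_n$ directly. All the primitivity and Jordan-type analysis you sketch is already packaged inside Osada's result; there is no need to hunt for explicit cycle types by reduction, and the given hypothesis provides no leverage for doing so.
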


\begin{proof}
First note that
\[
T_n(X,2)=X^n+(2^n-2)X^{n-1}+1.
\]
By assumption, one has~$n\geq 15$ and therefore~\cite{Har}*{Th.~1} implies that~$T_n(X,2)$ is irreducible over~$\Q$. In particular~$T_n(X,y)$ is irreducible over~$\Q(y)$.

Moreover~$\mathrm{gcd}(n,2^n-2)=1$. Indeed, for~$\{i,j\}=\{1,2\}$, one has
\[
2^n-2=(2^{p_i})^{p_j}-2\equiv 2(2^{p_j-1}-1)\not\equiv 0 \bmod p_i,
\]
since the order of~$2$ modulo~$p_i$ does not divide~$p_j-1$, by assumption. We conclude that the Galois group of the splitting field of~$T_n(X,2)$ over~$\Q$ is isomorphic to~$\mathfrak S_n$ by invoking~\cite{Osa}*{Th.~1}. We finish the proof by applying Proposition~\ref{prop:spec}.
\end{proof}

From Proposition~\ref{prop:p1p2} we can deduce that there are infinitely many values of~$n$ for which the Tutte polynomial of the ranked set in Example~\ref{ex:2_nonneg_dual_neg} has maximal Galois group over~$\Q(y)$. The following statement gives a strong form of this fact.

\begin{corollary}\label{cor:pos-proportion}
For large enough~$t\geq 2$, one has the following lower bound (that holds with an absolute implied constant):
\[
\#\{n\leq t\colon G_{\Q,y}(T_n(x,y))\simeq \mathfrak S_n\}\gg \frac{t^{0.16}}{\log t}.
\]
\end{corollary}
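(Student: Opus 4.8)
The plan is to invoke Proposition~\ref{prop:p1p2} and reduce the corollary to producing many \emph{admissible} semiprimes $n=p_1p_2$, i.e.\ products of two distinct odd primes for which the multiplicative order of $2$ modulo $p_i$ does not divide $p_j-1$ for $\{i,j\}=\{1,2\}$. The key simplification I would exploit is that \emph{fixing} one of the two prime factors to be a small prime makes one of the two order conditions automatic and turns the other into a single congruence. This produces an admissible $n$ for a positive proportion of primes $p_1$, and will in fact give the stronger bound $\gg t/\log t$, from which the stated estimate follows at once.

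Concretely, I would fix $p_2=7$ and let $p_1$ run over primes with $p_1\equiv 2\pmod 3$. For such a pair set $n=7p_1$ and verify the two hypotheses of Proposition~\ref{prop:p1p2}. The order of $2$ modulo $7$ is $3$, so the condition that it not divide $p_1-1$ reads $3\nmid p_1-1$, that is $p_1\equiv 2\pmod 3$ (as $p_1\neq 3$). Conversely, the order of $2$ modulo $p_1$ divides $p_2-1=6$ only if $p_1\mid 2^6-1=3^2\cdot 7$, i.e.\ only if $p_1\in\{3,7\}$; hence for every prime $p_1\geq 5$ with $p_1\equiv 2\pmod 3$ (such a $p_1$ is automatically different from $7$, since $7\equiv 1\pmod 3$) the order of $2$ modulo $p_1$ does not divide $6$. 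Thus $n=7p_1$ is a product of two distinct odd primes satisfying both order conditions, and Proposition~\ref{prop:p1p2} yields $G_{\Q,y}(T_n(x,y))\simeq\mathfrak S_n$.

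It remains to count these $n$. Distinct primes $p_1$ give distinct semiprimes $n=7p_1$ by unique factorisation, and $n\leq t$ is equivalent to $p_1\leq t/7$. By the prime number theorem for arithmetic progressions, the number of primes $p_1\leq t/7$ with $p_1\equiv 2\pmod 3$ is asymptotic to $\tfrac12\,(t/7)/\log(t/7)$, hence $\gg t/\log t$. Therefore
\[
\#\{n\leq t\colon G_{\Q,y}(T_n(x,y))\simeq\mathfrak S_n\}\;\gg\;\frac{t}{\log t}\;\geq\;\frac{t^{0.16}}{\log t},
\]
which proves the corollary with room to spare.

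The brevity of the argument comes precisely from fixing a small factor; the only genuine difficulty arises if one instead insists on a \emph{symmetric} family in which both prime factors tend to infinity. The natural candidate there is a pair of safe primes $p_i=2q_i+1$ (with $q_i$ prime): for these the order of $2$ modulo $p_i$ is divisible by $q_i$, so the two order conditions follow immediately from $q_1\neq q_2$. However, counting such pairs with a power-of-$t$ lower bound would require an unconditional lower bound on the number of Sophie Germain primes, which is not known. Fixing one factor removes this obstruction entirely, so that the proof rests only on Proposition~\ref{prop:p1p2} and the prime number theorem in arithmetic progressions; no deeper analytic input is needed.
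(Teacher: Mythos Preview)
Your proof is correct and in fact establishes the much stronger bound $\gg t/\log t$. The verification that $n=7p_1$ satisfies both order hypotheses of Proposition~\ref{prop:p1p2} whenever $p_1\equiv 2\pmod 3$ and $p_1\geq 5$ is clean and complete, and the count via the prime number theorem for arithmetic progressions is straightforward.

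Your route differs substantially from the paper's. The authors do \emph{not} fix one prime factor: they count pairs $(p_1,p_2)$ with both $p_i\equiv 3\pmod 4$, $p_i\neq 3$, and $\gcd(p_1-1,p_2-1)=2$; this last condition forces the order of $2$ modulo $p_i$ (which divides $p_i-1$ and exceeds $2$ for $p_i\geq 5$) not to divide $p_j-1$. To count such pairs they fix $p_1$ and seek a prime $p_2\leq t/p_1$ avoiding the residue class $1$ modulo each odd prime divisor of $p_1-1$ and modulo $4$. Guaranteeing that such a $p_2$ exists forces them to invoke a Linnik-type bound (Xylouris) on the least prime in an arithmetic progression, and the exponent $5.18$ there is what produces the $t^{1/6.2}\approx t^{0.16}$ in the final statement. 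Your observation that fixing $p_2=7$ collapses one order condition entirely and reduces the other to a single congruence modulo $3$ sidesteps all of this: you trade the symmetry of the construction for a far more elementary argument and a qualitatively better lower bound. The paper's approach, on the other hand, produces admissible $n$ with both prime factors tending to infinity, which may be of independent interest even if it is not needed for the corollary as stated.
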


\begin{proof}
By Proposition~\ref{prop:p1p2}, a lower bound for the quantity investigated is
\begin{equation}\label{eq:lb1}
\#\{n=p_1p_2\leq t\colon \mathrm{ord}_{p_i}(2)\nmid (p_j-1)\text{ for }\{i,j\}=\{1,2\}\},
\end{equation}
where~$p_1,p_2$ are odd prime numbers and~$\mathrm{ord}_{p_i}(2)$ denotes the multiplicative order of~$2$
modulo~$p_i$. Modulo a prime at least~$5$, we know that~$2$ cannot have order~$2$. Also, by
Fermat,~$\mathrm{ord}_{p_i}(2)\mid (p_i-1)$ for any odd prime~$p_i$, and therefore~\eqref{eq:lb1} is
bounded from below by
\[
\#\{n=p_1p_2\leq t\colon p_i\neq 3\,,p_i\equiv 3\bmod (4) ,\,(i=1,2),\, \mathrm{gcd}(p_1-1,p_2-1)=2\}.
\]
We rewrite this quantity as follows:
\begin{equation}\label{eq:lb2}
\sum_{\substack{7\leq p_1\leq t\\ p_1\equiv 3\bmod 4}}\#\{7\leq p_2\leq t/p_1\colon p_2\not\equiv 1\bmod q\text{ if }q=4 \text{ or }q\mid (p_1-1),\, q\text{ odd prime}\}.
\end{equation}
To estimate the general term of this sum we need to count, for given~$p_1$, the prime numbers~$p_2$ that are less than~$t/p_1$ and satisfy a linear system of type~$\{p_2\equiv a_i\bmod q_i,\,\forall i\in\{0,\ldots,r\}\}$ where~$q_0=4$,~$q_1,\ldots, q_r$ are the distinct odd prime divisors of~$p_1-1$ and~$a_i$ is an invertible class distinct from~$1$ modulo~$q_i$ for~$i\geq 1$. By the Chinese Remainder Theorem each such system is equivalent to a single congruence~$p_2\equiv a\bmod (q_0\cdots q_r)$. One knows~\cite{Xyl}*{Th.~1.1} that there exists a prime number~$p_2\ll (q_1\cdots q_r)^{5.18}$ satisfying such a congruence (with an absolute effectively computable implied constant). This means that if one restricts the sum in~\eqref{eq:lb2} to primes~$p_1\leq t^{1/6.2}$, for big enough~$t$, then there is a suitable prime~$p_2\leq t/p_1$. In particular the sum~\eqref{eq:lb2} is bounded from below, for big enough~$t$, by
\[
\pi(t^{1/6.2}; 4,3),
\]
the count of prime numbers that are~$3$ modulo~$4$ and less than~$t^{1/6.2}$.
We conclude by invoking the Prime Number Theorem in arithmetic progressions and the fact that~$1/6.2\simeq 0.1613$.
\end{proof}

In the rest of Section~\ref{sec:fam}, we prove that Conjecture~\ref{conj:BCM} holds for several families
of~$2$-connected graphs. In some cases we do not prove the full force of the maximality result predicted by
the conjecture but we obtain information about the Galois action that goes beyond the transitivity granted by
our irreducibility statement (Corollary~\ref{cor:dMMN}).

\subsection{The case of an \texorpdfstring{$n$}{n}-cycle}

Let~$n\geq 3$ and let~$C_n$ be the cycle on~$n$ vertices, a~$2$-connected graph (the associated matroid is connected) with Tutte polynomial given by
\[
T_{C_n}(x,y)=\sum_{i=1}^{n-1}x^i+y.
\]
In this section, we prove the conjecture of Bohn--Cameron--M\"uller for the graphic matroid associated with the~$n$-cycle~$C_n$ where~$n\geq 3$.

\begin{theorem}\label{th:CnCase}
Let~$n\geq 3$ and let~$k$ be a field of characteristic~$0$ or~$p\nmid n(n-1)$. We have~$G_{k,y}(T_{C_n})
\simeq\mathfrak S_{n-1}$. In particular Conjecture~\ref{conj:BCM} holds for~$C_n$.
\end{theorem}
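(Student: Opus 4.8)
The plan is to realize $\pi$ as a degree-$(n-1)$ polynomial cover of the $y$-line and to read off $G_{k,y}(T_{C_n})$ from its ramification. Write $f(x,y)=T_{C_n}(x,y)=x^{n-1}+\cdots+x+y$, which is monic of degree $r=n-1$ in $x$. The equation $f=0$ expresses $y$ as the polynomial $h(x)=-\sum_{i=1}^{n-1}x^{i}$ of degree $n-1$, so $\pi$ is the map $x\mapsto h(x)$, a degree-$(n-1)$ polynomial map $\mathbb{P}^1\to\mathbb{P}^1$ that is totally ramified over $y=\infty$; in particular $G:=G_{k,y}(T_{C_n})$ contains the $(n-1)$-cycle coming from the inertia at infinity and is therefore transitive (this also follows from Corollary~\ref{cor:dMMN}, since $C_n$ is a connected matroid with $t_{1,0}=1$). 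By Remark~\ref{rem:basefieldext} it suffices to prove $G\simeq\mathfrak S_{n-1}$ after passing to $\overline k$, so I would assume $k$ algebraically closed.

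The computational heart is to show that every finite branch point of $\pi$ is simple, i.e.\ that $\Delta(y):=\disc_x f$ is separable. I would compute $\Delta$ through the auxiliary trinomial $g(x)=(x-1)f(x)=x^{n}+(y-1)x-y$: the classical discriminant formula for $x^{n}+ax+b$ gives, up to sign,
\[
\disc_x g=\pm\bigl[(n-1)^{n-1}(y-1)^{n}+n^{n}y^{n-1}\bigr]=:\pm D(y),
\]
while adjoining the rational root $x=1$ of $g$ multiplies the discriminant by $f(1)^{2}=(y+n-1)^{2}$, whence $\Delta=\pm D/(y+n-1)^{2}$. A short computation of $\gcd(D,D')$ shows that, under the hypothesis $\operatorname{char}k\nmid n(n-1)$, the unique repeated root of $D$ is $y=-(n-1)$ and it has multiplicity exactly $2$: one verifies $D(-(n-1))=D'(-(n-1))=0\neq D''(-(n-1))$, that $0$ and $1$ are not roots of $D$, and that any common root of $D,D'$ other than $0,1$ must equal $-(n-1)$. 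Hence $\Delta$ is squarefree of degree $n-2$, and a Riemann--Hurwitz check ($2=2(n-1)-(n-2)-(n-2)$) confirms that $\pi$ has exactly $n-2$ finite branch points, each carrying a single ordinary double point, besides the totally ramified fibre over infinity.

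From separability of $\Delta$ the conclusion is formal. Since $\operatorname{char}k\nmid n(n-1)$ forces $\operatorname{char}k\neq2$ and $\operatorname{char}k\nmid n-1$, all ramification of $\pi$ is tame. Over each root $y_0$ of $\Delta$ the fibre has exactly one double root, so the inertia generator at $y_0$ acts on the $n-1$ sheets as a single transposition, while the inertia at infinity is the full $(n-1)$-cycle. Because the tame fundamental group of $\mathbb{P}^1$ minus the branch locus is generated by these inertia elements subject only to the relation that their product is trivial, $G$ is generated by the $n-2$ transpositions alone (the $(n-1)$-cycle being their product). A transitive subgroup of $\mathfrak S_{n-1}$ generated by transpositions is all of $\mathfrak S_{n-1}$, since its transposition graph is then connected on $n-1$ vertices; thus $G\simeq\mathfrak S_{n-1}$ over $\overline k$, and by Remark~\ref{rem:basefieldext} over $k$.

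The main obstacle is the separability of $\Delta$: everything downstream is mechanical once $\disc_x f$ is known to be squarefree, and this is exactly where the arithmetic hypothesis $\operatorname{char}k\nmid n(n-1)$ is used, both to force the repeated root of $D$ to be single and exactly double and to guarantee tame ramification so that inertia yields genuine transpositions. In characteristic zero the final step is the classical complex monodromy picture; in characteristic $p\nmid n(n-1)$ it is its tame \'etale analogue, and one may instead finish by a specialization argument (Proposition~\ref{prop:spec}) detecting a transposition and the $(n-1)$-cycle by reducing $f(x,t)$ modulo suitable primes, the simplicity of the branching supplying the primitivity needed to upgrade to $\mathfrak S_{n-1}$.
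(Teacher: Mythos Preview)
Your argument is correct and runs parallel to the paper's: both reduce Theorem~\ref{th:CnCase} to the squarefreeness of $\disc_x T_{C_n}(x,y)$, computed via the trinomial $(x-1)T_{C_n}(x,y)=x^n+(y-1)x-y$ and its discriminant $D(y)$, from which the extraneous factor $(y+n-1)^2$ is removed. The paper then invokes Serre's Morse-function criterion \cite{Ser}*{Th.~4.4.5} as a black box and certifies separability of the quotient by citing Elkies's closed-form evaluation of its discriminant; you instead unpack the Morse criterion into the underlying monodromy picture (a transposition from inertia at each simple finite branch point, an $(n-1)$-cycle at infinity, and the standard fact that a transitive group generated by transpositions is the full symmetric group) and verify separability by the direct $\gcd(D,D')$ analysis, which is slightly more elementary and self-contained than appealing to Elkies's formula. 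The two routes are the same argument at different levels of abstraction; yours makes the role of the hypothesis $\operatorname{char}k\nmid n(n-1)$ (tameness and $D''(-(n-1))\neq 0$) a bit more visible.
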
 

Below we give a proof valid over any field satisfying the assumptions of the statement. For the case~$k=\Q$ (and therefore, over any number field by Remark~\ref{rem:basefieldext}) we will derive an independent proof as a consequence of Lemma~\ref{lem:Selmer}. Before proving Theorem~\ref{th:CnCase} we consider a slight modification of~$T_{C_n}$ better suited for computing its Galois group over~$k(y)$ with~$k$ as in the statement of Theorem~\ref{th:CnCase}.

\begin{lemma} \label{lem:Elkies}
For~$n$ and~$k$ as in Theorem~\ref{th:CnCase}, let~$p(x)=(x^n-1)/(x-1)\in k[x]$. Then, setting~$P(x,y)=p(x)-y$, we have~$G_{k,y}(P)\simeq \mathfrak S_{n-1}$.
\end{lemma}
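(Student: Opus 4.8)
The plan is to identify $G_{k,y}(P)$ with the monodromy group of the degree $n-1$ branched covering $x\mapsto p(x)$ of the projective line and to pin it down from ramification data. First I would note that $P(x,y)=p(x)-y$ is linear in $y$, hence irreducible in $k[x,y]$ and, being monic in $x$ over $k[y]$, irreducible over $k(y)$; this already gives transitivity of $G_{k,y}(P)$ on the $n-1$ roots. By Remark~\ref{rem:basefieldext} (applied to the algebraic extension $\overline k/k$) it suffices to prove maximality over $\overline k(y)$, so I would replace $k$ by $\overline k$ from the outset and compute the local monodromies of $p\colon \mathbb P^1\to\mathbb P^1$ at its branch points, the conclusion over $k$ following formally.

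The second step, and the crux, is the ramification analysis. At infinity the map is totally ramified of index $n-1$, and since the hypothesis $p\nmid n(n-1)$ forces $\mathrm{char}(k)\nmid n-1$, this ramification is tame and contributes an $(n-1)$-cycle $\sigma_\infty$. For the finite branch points I would compute $p'(x)=\big((n-1)x^n-nx^{n-1}+1\big)/(x-1)^2$ and check, using $\mathrm{char}(k)\nmid n(n-1)$, that the numerator has $x=1$ as an \emph{exact} double root and is otherwise separable, so that $p'$ has $n-2$ simple roots and every critical point is simple. It then remains to see that the $n-2$ critical values are pairwise distinct, equivalently that $\disc_x\big(p(x)-y\big)$ is separable as a polynomial in $y$. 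Here I would exploit the identity $(x-1)\big(p(x)-y\big)=x^n-yx+(y-1)$, reducing matters to the classical discriminant of a trinomial: since $\disc_x(fg)=\disc_x(f)\disc_x(g)\,\mathrm{Res}_x(f,g)^2$ with $f=x-1$ and $\mathrm{Res}_x\big(x-1,p(x)-y\big)=p(1)-y=n-y$, one gets $\disc_x\big(x^n-yx+(y-1)\big)=\pm\,D(y)\cdot$(unit), where $D(y):=n^n(y-1)^{n-1}-(n-1)^{n-1}y^n$. A short computation then shows that $y=n$ is an exact double root of $D$ (this is the extraneous factor $(n-y)^2$ coming from the root $x=1$) and that a common zero $y_0$ of $D$ and $D'$ must satisfy $n(y_0-1)=(n-1)y_0$, i.e.\ $y_0=n$, so $D$ has no other multiple root. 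Hence $\disc_x\big(p(x)-y\big)=D(y)/(y-n)^2$ is separable and the critical values are distinct. Establishing this separability is the main obstacle; everything else is either formal or a direct derivative computation.

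With the local data in hand I would conclude group-theoretically. Each finite branch point carries a single simple critical point, so its tame local monodromy (tameness of the order $2$ ramification is automatic, as $\mathrm{char}(k)\nmid n(n-1)$ excludes characteristic $2$, $n(n-1)$ being always even) is a transposition $\tau_i$, yielding $n-2$ transpositions. The single relation in the tame fundamental group of $\mathbb P^1_{\overline k}$ minus the branch locus gives $\tau_1\cdots\tau_{n-2}\,\sigma_\infty=1$, so $\sigma_\infty^{-1}=\tau_1\cdots\tau_{n-2}$ is a product of $n-2=(n-1)-1$ transpositions equal to an $(n-1)$-cycle. Consequently the graph on the $n-1$ sheets whose edges are the $\tau_i$ is connected, so the $\tau_i$ generate a transitive subgroup of $\mathfrak S_{n-1}$ that is generated by transpositions, which must be all of $\mathfrak S_{n-1}$. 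Therefore $G_{k,y}(P)\simeq\mathfrak S_{n-1}$. Over $\mathbb C$ the fundamental-group input is the usual topological monodromy; in general one invokes the tame étale fundamental group, the reduction to $\overline k$ having been made at the start.
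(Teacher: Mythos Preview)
Your proof is correct and follows essentially the same route as the paper's: both reduce to showing that $p$ is a Morse function (simple critical points with pairwise distinct critical values) via the trinomial discriminant of $(x-1)(p(x)-y)=x^n-yx+(y-1)$, after which the paper invokes Serre's Morse-function criterion as a black box while you unpack the underlying tame-monodromy argument by hand. The only substantive difference is in verifying that $q(y)=D(y)/(y-n)^2$ is separable: the paper cites Elkies's explicit formula $\disc(q)=\pm\,2\,n^{(n-1)(n-3)}(n-1)^{(n-2)(n-4)}$, whereas you observe directly that any common zero of $D$ and $D'$ must satisfy $n(y-1)=(n-1)y$, forcing $y=n$.
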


\begin{proof}[Proof of Lemma~\ref{lem:Elkies}]
We present the argument given by N. Elkies~\cite{El}. It uses a result that goes back to Hilbert (\cite{Ser}*{Th.~4.4.1}) under the generalized form proved in Serre's book~\cite{Ser}*{Th.~4.4.5} and which asserts the following. Let~$f\in k[x]$ be of  
degree~$d$, and suppose the roots~$x_1,\dotsc, x_{d-1}$ of the derivative~$f'$ are pairwise distinct and that the images~$y_j\coloneqq f(x_j)$ 
are also pairwise distinct (\emph{i.e.}~$f$ is a so-called \emph{Morse function}), then the Galois group of~$f(x)-y$ over~$k(y)$ is isomorphic to 
$\mathfrak S_{d}$.

To see that this holds for~$p(x)$ as in the statement, note that the discriminant of~$p(x)-y$ with respect
to~$x$ has precisely the~$y_j$'s as roots (this is seen \textit{e.g.} by using the defining formula for the
discriminant which is, up to sign, the resultant of a polynomial and its derivative). In fact the computation
\[
\frac{\mathrm{d}}{\mathrm{d}x}\big((x-1)(p(x)-y)\big)=(p(x)-y)+(x-1)p'(x)
\]
shows that~$(x-1)(p(x)-y)$ and its~$x$-derivative have a common root if and only if~$y$ is one of the~$y_j$'s, or~$y=n$ (in the latter case the common root is~$x_0=1$). Thus if one wants to detect multiplicities in the~$y_j$'s by using the formula for the discriminant of a trinomial (see \emph{e.g.}~\cite{Swa}*{Th.~2})
\[
\disc_x((x-1)(p(x)-y))=\disc_x(x^n-yx-1+y)=\pm( (n-1)^{n-1}y^n-n^n(y-1)^{n-1}),
\]
one first has to divide this~$y$-polynomial by the highest power of~$(y-n)$ that it is a multiple of. One easily sees that this maximal power is~$(y-n)^2$ and we deduce that the roots of 
\[
q(y)\coloneqq \frac{(n-1)^{n-1}y^n-n^n(y-1)^{n-1}}{(y-n)^2}
\]
are the~$y_j$'s counted with multiplicity. To finish the proof one computes the discriminant of the polynomial~$q$ and checks that it does not vanish. Elkies computes~(\cite{El}) this discriminant and shows that, up to sign, it equals~$2\Delta_{n}\Delta_{n-1}$ (here, for any positive integer~$m$ we define~$\Delta_m=m^{(m-1)(m-3)}$) which is non zero by our assumption on the characteristic of~$k$.
\end{proof}

\begin{proof}[Proof of Theorem~\ref{th:CnCase}]
In the notation of Lemma~\ref{lem:Elkies}, one has~$P(x,1-y)=T_{C_n}(x,y)$ and therefore we deduce that~$G_{k,1-y}(P)=G_{k,y}(T_{C_n})\simeq \mathfrak S_{n-1}$ since~$k(y)=k(1-y)$.
\end{proof}

\subsection{The case of uniform matroids}
Let~$U_{a,b}$ be the uniform matroid, which is defined on groundset~$\{1,\ldots,b\}$ and has bases all subsets of size~$a$: the rank of~$A\subseteq \{1,\dots, b\}$ is equal to~$|A|$ if~$|A|\leq a$ and equal to~$a$ if~$a<|A|\leq b$. The Tutte polynomial of~$U_{a,b}$ is then given by 
\[
T_{U_{a,b}}(x,y)=\sum_{i=0}^{a}\binom{b}{i}(x-1)^{a-i}+\sum_{j=a+1}^b \binom{b}{j}(y-1)^{j-a}.
\]
Assume further that~$0<a<b$, for which~$U_{a,b}$ is connected (Brylawski~\cite{B72}*{Cor.~7.14} showed that the coefficient of~$x$ in~$T_{U_{a,b}}(x,y)$ is equal to~$\binom{b-2}{a-1}$).

\begin{theorem}\label{th:UnifMat}
Assume~$a\geq 2$. 
\begin{enumerate}
  \item\label{itunifmat1} One has~$G_{\Q,y}(T_{U_{a,b}})\simeq \mathfrak S_a$ if~$b$ is big enough in terms of~$a$.

  \item\label{itunifmat2}   If~$b=a+p$ for some prime number~$p>a$ then~$G_{\Q,y}(T_{U_{a,b}})$ acts as a primitive permutation group on the roots of~$T_{U_{a,b}}$ in a fixed algebraic closure of~$\Q(y)$. If in addition~$a$ is composite, then the action of~$G_{\Q,y}(T_{U_{a,b}})$ is doubly transitive.
\end{enumerate}
\end{theorem}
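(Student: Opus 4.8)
The plan is to treat both parts through the same geometric reduction. As an $x$-polynomial, $T_{U_{a,b}}(x,y)=F(x)-u$, where $F(x)=\sum_{i=0}^a\binom bi(x-1)^{a-i}$ is monic of degree $a$ and $u=u(y)=-\sum_{j=a+1}^b\binom bj(y-1)^{j-a}$ is a polynomial in $y$ of degree $b-a$; thus the $a$ roots on which $G_{\Q,y}$ acts are the fibre $F^{-1}(u)$, and I would study the degree-$a$ cover $F\colon\mathbb A^1_x\to\mathbb A^1_u$ alongside the degree-$(b-a)$ cover $y\mapsto u(y)$. Write $\Omega_0$ for the splitting field of $F(x)-u$ over $\overline\Q(u)$ and $G_0=\gal_{\overline\Q(u)}(F(x)-u)$ for its (geometric monodromy) group; since $F(x)-u$ is irreducible over $\overline\Q(u)$, $G_0$ is a transitive subgroup of $\mathfrak S_a$ containing the $a$-cycle given by the total ramification of $F$ at $x=\infty$. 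As $u\in\overline\Q(y)$, the splitting field of $T_{U_{a,b}}$ over $\overline\Q(y)$ is $\Omega_0\cdot\overline\Q(y)$, so $\gal_{\overline\Q(y)}(T_{U_{a,b}})\cong\gal(\Omega_0/\Omega_0\cap\overline\Q(y))$, and by Remark \ref{rem:basefieldext} it suffices to control this geometric group.

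For part \eqref{itunifmat1} the plan is to prove, for $b$ large in terms of $a$, that (a) $F$ is a Morse function and (b) the finite critical values of $F$ are disjoint from the finite critical values of $u(y)$. Granting (a), every finite branch point of $\Omega_0/\overline\Q(u)$ has inertia a transposition, so $G_0$ is transitive and generated by transpositions, whence $G_0=\mathfrak S_a$. Granting (b), a finite place can ramify in $\Omega_0\cap\overline\Q(y)$ only if it ramifies in both $\Omega_0$ and $\overline\Q(y)$ over $\overline\Q(u)$; hence $\Omega_0\cap\overline\Q(y)$ is unramified over $\overline\Q(u)\cong\overline\Q(\mathbb P^1)$ away from $u=\infty$, and an extension ramified at a single point is trivial, so $\Omega_0\cap\overline\Q(y)=\overline\Q(u)$ and $\gal_{\overline\Q(y)}(T_{U_{a,b}})\cong G_0=\mathfrak S_a$. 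Transferring down by Remark \ref{rem:basefieldext} gives $G_{\Q,y}(T_{U_{a,b}})\simeq\mathfrak S_a$. The main obstacle is (a) and (b): each amounts to the non-vanishing of a resultant or discriminant that, because the coefficients of $F$ and $u$ are the polynomials $\binom bi$ and $\binom bj$ in $b$, is itself a polynomial in $b$; I would prove these are not identically zero by extracting their leading terms as $b\to\infty$, so that they hold for all large $b$.

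For part \eqref{itunifmat2}, the hypothesis $b-a=p$ prime with $p>a$ makes the reduction automatic: $[\Omega_0:\overline\Q(u)]$ divides $a!$ while $[\overline\Q(y):\overline\Q(u)]=p$, so $[\Omega_0\cap\overline\Q(y):\overline\Q(u)]$ divides $\gcd(p,a!)=1$, giving $\gal_{\overline\Q(y)}(T_{U_{a,b}})\cong G_0$ with the $a$-cycle at infinity living inside $G_0$. Transitivity of $G_0$ is Corollary \ref{cor:dMMN}. If $a$ is prime, $G_0$ is transitive of prime degree and hence primitive. If $a$ is composite I would first show $F$ is indecomposable, which is equivalent to primitivity of $G_0$, and then invoke the classification of primitive permutation groups containing a full cycle: in composite degree $a$ such a group is necessarily $2$-transitive (the only non-$2$-transitive possibility, a subgroup of $\mathrm{AGL}_1(a)$, occurs for $a$ prime). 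Primitivity, respectively $2$-transitivity, then passes from $\gal_{\overline\Q(y)}$ to $G_{\Q,y}$ because the former is a normal subgroup acting on the same $a$ roots, and an overgroup of a primitive (resp. $2$-transitive) group is primitive (resp. $2$-transitive).

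The hard part of part \eqref{itunifmat2} is the indecomposability of $F$, which is the polynomial part of $x^{a+p}/(x-1)^p$ in powers of $x-1$, so that $(x-1)^pF(x)=x^{a+p}-P(x)$ with $\deg P<p$. I would rule out a factorisation $F=A\circ B$ with $1<\deg B<a$ by noting that $B'\mid F'$ and that the critical-value multiset of $F$ would then be constant along the fibres of $A$; these constraints I expect to clash with the explicit ramification read off from $(x-1)^{p+1}F'(x)=a\,x^{a+p}-(a+p)x^{a+p-1}+\sum_{k=0}^{p-1}(p-k)\binom{a+p}{k}(x-1)^k$, or, alternatively, I would prove indecomposability after reducing $F$ modulo a well-chosen prime (which lifts, $F$ being monic). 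Separability is never an issue in characteristic $0$, and all descents between $\gal_{\overline\Q(y)}$, $G_{\Q,y}$ and specialisations are covered by Proposition \ref{prop:spec} and Remark \ref{rem:basefieldext}.
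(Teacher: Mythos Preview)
Your reduction in both parts---writing $T_{U_{a,b}}(x,y)=F(x)-u(y)$ and comparing the monodromy over $\Q(y)$ with the monodromy of $F(x)-u$ over the subfield $\Q(u)$ (or its geometric analogue over $\overline\Q$)---is exactly the mechanism the paper uses. The $\gcd(p,a!)=1$ argument to force $\Omega_0\cap\overline\Q(y)=\overline\Q(u)$ is the paper's Lemma~\ref{lem:compos} in geometric dress, and your appeal to the $a$-cycle at infinity together with the classification of primitive groups containing a full cycle matches the paper's use of \cite{Tur}*{Lem.~3.3, Lem.~4.4}. So for part~\eqref{itunifmat2} your outline and the paper's proof coincide.

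The gap you flag---indecomposability of $F$---is resolved in the paper by a one-line criterion you are missing. Observe that
\[
F(x+1)=\sum_{i=0}^a\binom{b}{i}x^{a-i}=x^a+bx^{a-1}+\cdots,
\]
and since $b=a+p$ with $p>a$ prime, $\gcd(a,b)=\gcd(a,p)=1$. The paper then invokes \cite{DGT}*{Cor.~1}: a monic integral polynomial whose degree is coprime to its second coefficient is indecomposable. This replaces your proposed critical-value or mod-$p$ analysis entirely, and via \cite{Tur}*{Lem.~3.1} yields primitivity directly (for all $a$, not only composite $a$). Your separate treatment of prime $a$ is then unnecessary.

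For part~\eqref{itunifmat1} the approaches genuinely diverge. The paper does not attempt to show $F$ is Morse or to compare critical loci; it simply specialises at $y=1$, recognises $T_{U_{a,b}}(x,1)=\sum_{i=0}^a\binom{b}{i}(x-1)^{a-i}$ as (the reciprocal of a shift of) a truncated binomial expansion, and cites the theorem of Filaseta--Moy~\cite{FiMo} (completed for $a=6$ by~\cite{KlTe}) that its Galois group over $\Q$ is $\mathfrak S_a$ for $b$ large. Proposition~\ref{prop:spec} then pushes this up to $\Q(y)$. Your geometric route would, if completed, give a self-contained proof not relying on that deep arithmetic input; but the two claims you defer---that the discriminant of $F'$ and the resultant encoding ``distinct critical values, disjoint from those of $u$'' are nonzero polynomials in $b$---are the entire content, and the leading-term heuristic needs to be made precise (note for instance that at $b=a$ one has $F(x)=x^a$, which is maximally non-Morse, so the vanishing locus is nonempty). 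The paper's route trades this analysis for a citation; yours would trade the citation for an explicit computation you have not yet carried out.
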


\begin{remark}
  The reason why the second statement of Theorem~\ref{th:UnifMat} is included in addition to the first one is because of uniformity issues. As we explain in the proof below, Theorem~\ref{th:UnifMat}\eqref{itunifmat1} is quite directly deduced from work of Filaseta and Moy~\cite{FiMo} (recently complemented by Klahn and Technau~\cite{KlTe}), however no explicit growth rate for~$b$ as a function of~$a$ is made explicit in~\cites{FiMo,KlTe} and it does not seem clear how to extract this information from their approach. Celebrated results (see \emph{e.g}~\cite{DiMo}*{Chap.~4 and~\S7.7}) in group theory show that primitivity (and all the more so for, double transitivity) is not a property shared by many subgroups of the symmetric group.
\end{remark}

\begin{proof}[Proof of Theorem~\ref{th:UnifMat}]

\eqref{itunifmat1} We note that 
\[
q^\star_{a,b}(x-1)\coloneqq T_{U_{a,b}}(x,1)=\sum_{i=0}^a
\binom{b}{i}(x-1)^{a-i}\,.
\]
This polynomial is the reciprocal evaluated at~$x-1$ of the polynomial denoted~$q_{a,b}(x)$ in~\cite{FiMo}*{p.~295}. For fixed~$a\geq 2$, one can apply~\cite{FiMo}*{Theorem 1} (complemented, for~$a=6$, by~\cite{KlTe}*{Th.~2}) which asserts that for big enough~$b$ (as a function of~$a$), the Galois group of~$q_{a,b}(x-1)$ over~$\Q$ is isomorphic to~$\mathfrak S_a$. The same holds for~$q^\star_{a,b}(x)$, and we conclude by Proposition~\ref{prop:spec}.

\medskip
\eqref{itunifmat2} Let~$\alpha_1,\ldots,\alpha_a$ be the roots of~$T_{U_{a,b}}$ in an algebraic closure of~$\Q(y)$. 
By definition~$G_{\Q,y}(T_{U_{a,b}})\simeq \gal(\Q(y)((\alpha_i)_i)/\Q(y))$. For simplicity this group will be denoted~$G$ and we will write
\[
f(x)\coloneqq \sum_{i=0}^{a}\binom{b}{i}(x-1)^{a-i}\,,\qquad g(y)\coloneqq \sum_{j=a+1}^b \binom{b}{j}(y-1)^{j-a},
\]
so that~$T_{U_{a,b}}(x,y)=f(x)+g(y)$. The following diagram summarizes the situation.
\[
\begin{tikzcd}
 & \Q(y)\big((\alpha_i)_i\big) \arrow[ld, "c"', dash] \arrow[rd, "|G|", dash] & \\
\Q(g(y))\big((\alpha_i)_i\big) \arrow[rd, dash] & & \Q(y) \\
& \Q(g(y))\arrow[ru, "\deg g =b - a=p"', dash] &
\end{tikzcd}
\]
Let~$K=\Q(y)$,~$K_g=\Q(g(y))$ and~$c=[K((\alpha_i)_i):K_g(\alpha_i)_i]$. The fact that~$[K:K_g]=\deg g$ comes from~\cite{Cox}*{Prop.~7.5.5, p.~176} since~$g$ is non constant. The extension~$K_g((\alpha_i)_i)/K_g$ is Galois since~$K_g((\alpha_i)_i)$ is a splitting field of~$T_{U_{a,b}}$ over~$K_g$. Let~$H$ denote its Galois group. As easily seen in the above diagram,~$c\cdot |H|=|G|\cdot \deg g$. 

Any automorphism of~$K((\alpha_i)_i)$ fixing~$K$ restricts to an automorphism of~$K_g((\alpha_i)_i)$ fixing~$K_g$ (indeed,~$g$ has rational coefficients). This induces an injective group morphism~$G\hookrightarrow H$. 

Since~$g$ is a non constant polynomial with rational coefficients, one has~$H\simeq \gal_{\Q(Y)}(f(x)+Y)$, where~$Y$ is an indeterminate over~$\Q$ and the Galois group is understood as that of the splitting field of~$f(x)+Y$ over~$\Q(Y)$. We invoke a result of Dujella, Gusi{\'c} and Tichy~\cite{DGT}*{Cor.~1}, that asserts that
the polynomial with integer coefficients
\[
f(x+1)=x^a+bx^{a-1}+\cdots
\]
is indecomposable (meaning that if~$f=f_1\circ f_2$ for some integral polynomials~$f_1$,~$f_2$ then either~$f_1$ or~$f_2$ has degree~$1$; note also that regarding~$f(x+1)$ as an element of~$\Z[x]$ or~$\Q[x]$ does not affect indecomposability, by~\cite{Tur}*{Cor.~2.3}) since, by assumption,~$b$ is coprime to~$a$. In turn the polynomial~$f$ is indecomposable and by virtue of~\cite{Tur}*{Lem.~3.1}, the group~$H$ acts as a primitive permutation group on the roots (seen in a fixed algebraic closure~$\overline{\Q(Y)}$ of~$\Q(Y)$) of the~$x$-polynomial~$f(x)+Y$. In case~$a$ is composite, we can further combine~\cite{Tur}*{Lem.~3.3, Lem.~4.4}. The former statement guarantees that~$H$ contains an~$a$-cycle while the latter infers that in this case~$a$ composite implies that~$H$ is doubly transitive as a permutation group of the roots in~$\overline{\Q(Y)}$ of the~$x$-polynomial~$f(x)+Y$.

\medskip
Finally we prove that~$G\simeq H$. It relies on the following classical property of the compositum of a field extension with a Galois extension (see \emph{e.g.}~\cite{DuFo}*{\S 14.4}).

\begin{lemma}\label{lem:compos}
Let~$\Omega/k$ be a field extension and let~$L$ and~$M$ be subextensions such that~$L/k$ is finite Galois. Then the compositum~$LM$ (inside~$\Omega$) is finite Galois over~$M$ and~$\gal(LM/M)\simeq \gal(L/(L\cap M))$.
\end{lemma}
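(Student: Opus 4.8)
The plan is to run the classical ``theorem on natural irrationalities'' argument, whose engine is a single restriction homomorphism analysed via the Galois correspondence applied twice. First I would pass to a splitting-field description of $L$: since $L/k$ is finite Galois, it is the splitting field over $k$ of some separable polynomial $f\in k[x]$. Because $f$ has coefficients in $k\subseteq M$ and all its roots already lie in $L\subseteq LM$, the compositum $LM$ is the splitting field of $f$ over $M$. A splitting field of a separable polynomial is Galois, and it is finite since $L/k$ is (writing $L=k(\alpha_1,\dots,\alpha_m)$ for the roots of $f$, one has $LM=M(\alpha_1,\dots,\alpha_m)$). This already settles the first assertion that $LM/M$ is finite Galois.

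Next I would construct the restriction map
\[
\rho\colon \gal(LM/M)\longrightarrow \gal(L/k),\qquad \sigma\longmapsto \sigma|_{L}.
\]
The point to verify is that $\rho$ is well defined: any $\sigma\in\gal(LM/M)$ fixes $M\supseteq k$ pointwise, hence fixes $k$, so it permutes the roots of $f$; as $L$ is generated over $k$ by those roots, $\sigma$ carries $L$ onto itself and $\sigma|_L\in\gal(L/k)$. It is then immediate that $\rho$ is a group homomorphism. Injectivity is also clear: if $\sigma|_L=\mathrm{id}_L$, then $\sigma$ fixes both $L$ and $M$, hence fixes their compositum $LM$, so $\sigma=\mathrm{id}$.

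It then remains to identify the image $H\coloneqq\rho(\gal(LM/M))\leq\gal(L/k)$. The key computation is that its fixed field inside $L$ equals $L\cap M$. Indeed, for $a\in L$ one has $\rho(\sigma)(a)=\sigma(a)$, so $a$ is fixed by all of $H$ if and only if it is fixed by every $\sigma\in\gal(LM/M)$; since $LM/M$ is Galois, the latter holds exactly when $a\in M$, that is, when $a\in L\cap M$. Applying the fundamental theorem of Galois theory to $L/k$, the subgroup $H$ with fixed field $L\cap M$ is precisely $\gal(L/(L\cap M))$, and therefore $\rho$ restricts to an isomorphism $\gal(LM/M)\simeq\gal(L/(L\cap M))$.

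I expect the only genuinely delicate point to be the well-definedness of $\rho$, namely that every $M$-automorphism of $LM$ stabilizes $L$; this is exactly where normality of $L/k$ is used. Everything else is a formal consequence of the Galois correspondence, invoked once for $LM/M$ (to read off the fixed field $M$) and once for $L/k$ (to match the subgroup $H$ with the intermediate field $L\cap M$).
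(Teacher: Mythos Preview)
Your proof is correct and is precisely the standard ``natural irrationalities'' argument; the paper does not supply its own proof of this lemma but simply cites \cite{DuFo}*{\S 14.4}, where the same restriction-map argument you give is carried out. There is nothing to add.
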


In order to apply the lemma to our situation, note that~$K((\alpha_i)_i))$ is the compositum of~$K$ and~$K_g((\alpha_i)_i)$ (inside~$\overline{\Q(y)}$). 
The lemma implies that
\[
G=\gal(K\big((\alpha_i)_i\big)/K)\simeq \gal\Big(K_g\big((\alpha_i)_i\big)/(K\cap K_g\big((\alpha_i)_i\big))\Big).
\]
However~$K\cap K_g\big((\alpha_i)_i\big)$ is a subextension of~$K/K_g$ which, by assumption, has prime degree. Therefore~$K\cap K_g\big((\alpha_i)_i\big)$ is either~$K$ or~$K_g$. By contradiction, assume that this intersection is~$K$; then we have a tower~$K_g\big((\alpha_i)_i\big)/K/K_g$ which would imply that~$p\mid [K_g\big((\alpha_i)_i\big):K_g]$. This is impossible by our assumption on~$p$ and since~$[K_g\big((\alpha_i)_i\big):K_g]\leq a!$. We conclude that~$K\cap K_g\big((\alpha_i)_i\big)=K_g$ and in turn that~$G\simeq H$, since~$K_g((\alpha_i)_i)/K_g$ is Galois of group isomorphic to~$H$.

\end{proof}

\begin{remark} \label{rem:bivariate}
Note that~\cite{Tur}*{Lem.~3.3}, used in the proof to derive double transitivity from primitivity in the case where~$a$ is composite, is a property specific to bivariate polynomials (it holds at least for polynomials of the form~$f(x)-y$). In particular, contrary to many other arguments used in the present work, such a strong group-theoretic property on the Galois action is not established by a specialization argument. We will argue in the same way, using an approach specific to bivariate polynomials, in the next section when proving Theorem~\ref{th:CnCase} over number fields. Monodromy groups of type~$\gal_{\C(y)}(f(x)-y)$ where~$f\in \Q[x]$ is indecomposable have been classified~\cite{Mul95}. Also note that similar monodromy computations are performed in~\cite{KaNa}*{\S 7} in the context of curves over finite fields with defining equation of type~$y^a=f(x)$.
\end{remark}

\subsection{The case of an \texorpdfstring{$n$-cycle with a ``thick edge''}{n-cycle with a "thick edge"}}

\begin{definition}\label{def:thick} Let~$n, j \geq 1$ be integers. We define~$C_n^j$ to be the multigraph obtained from a path of length~$n-1$ with endpoints~$u$ and~$v$ by adding~$j$ edges joining~$u$ and~$v$ in parallel.
\end{definition}

Thus~$C_n^{1}=C_n$, the cycle of length~$n$, and~$C_n^j$ has~$n+j-1$ edges and rank~$n-1$. The graph~$C_n^{0}$ is~$P_n$,
the path on~$n$ vertices; the graph~$C_2^0$ consists of a single bridge.

Using the deletion-contraction recurrence for the Tutte polynomial (\emph{e.g.}~\cite{Gordon12}*{Th.~3.1}),
a straightforward induction gives, for~$n \geq 2$, 
\begin{equation}\label{eq:TutteThickCycle}
T(C_n^j;x,y)=x^{n-1} +(y+x+\cdots+x^{n-2})\cdot(1+y+\cdots+y^{j-1}).
\end{equation}
The multigraph~$C_2^j$ consists of~$j+1$ parallel edges joining two vertices, for which 
$T(C_2^j;x, y) = x + y + \cdots + y^j$ ,
consistent with~\eqref{eq:TutteThickCycle} with~$n = 2$. The multigraph~$C_1^j$ consists of~$j$ loops on a vertex, and~$T(C_1^j;x,y) = y^j$.

\subsubsection{The case where~$j$ is odd}
We prove the following extra case of Conjecture~\ref{conj:BCM}. One of the main ingredients here is specializing~$T(C_n^j;x,y)$ at~$y=-1$. Since~$j\equiv 1\bmod 2$ one has for~$n\geq 2$
\begin{equation}\label{eq:Spec-1}
T(C_n^j;x,-1)=x^{n-1}+x^{n-2}+\cdots+x-1.
\end{equation}

\begin{theorem}\label{th:ThickCnCase}
Let~$k/\Q$ be a finite field extension. Assume that~$n\geq 3$ and that~$j$ is odd. We have:
\begin{enumerate}
\item\label{it:1martin} the group~$G_{k,y}(T(C_n^j;x,y))$ is isomorphic to a transitive subgroup of~$\mathfrak S_{n-1}$ that contains a transposition;
\item\label{it:2martin} assuming that~$n$ is odd,
$G_{k,y}(T(C_n^j;x,y))\simeq\mathfrak S_{n-1}$;
\item\label{it:3martin} if~$n-1$ is prime, then~$G_{k,y}(T(C_n^j;x,y))\simeq\mathfrak S_{n-1}$; in fact~$\gal_{\Q}(T(C_n^j;x,-1))\simeq\mathfrak S_{n-1}$.
\end{enumerate}
\end{theorem}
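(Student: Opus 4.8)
The plan is to prove the three statements in an order that lets each build on the last, exploiting the key specialization $T(C_n^j;x,-1)=x^{n-1}+\cdots+x-1$ from~\eqref{eq:Spec-1} together with the specialization principle of Proposition~\ref{prop:spec} and the base-field observation of Remark~\ref{rem:basefieldext} (which lets me pass freely between $\Q$ and a finite extension $k$). First I would record that by Remark~\ref{rem:basefieldext} it suffices to treat $k=\Q$, and that by Corollary~\ref{cor:dMMN} the polynomial $T(C_n^j;x,y)$ is irreducible over $\Q(y)$ (the matroid $C_n^j$ is connected, being $2$-connected as a graph for $n\geq 3$, so $t_{1,0}\neq 0$), whence $G_{\Q,y}(T(C_n^j;x,y))$ is a transitive subgroup of $\mathfrak S_{n-1}$.

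For part~\eqref{it:1martin}, the plan is to produce a transposition in the Galois group via a factorization-pattern argument of the kind used throughout the paper: reduce the specialized polynomial $f(x):=T(C_n^j;x,-1)$ modulo a suitable prime $p$ and show it factors with pattern $(1^{n-3},2)$, i.e.\ as a product of distinct linear/irreducible factors with exactly one repeated transposition-type factor of degree $2$. The cleaner route, and the one I expect to carry the argument, is to compute $\disc_x f$ and show it is (up to sign) squarefree times a single square, or more directly to find a prime $p$ modulo which $f$ has exactly one double root and is otherwise separable; reduction then forces a transposition into $\gal_\Q(f)$, and by Proposition~\ref{prop:spec} this transposition lifts to $G_{\Q,y}(T(C_n^j;x,y))$. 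The polynomial $x^{n-1}+x^{n-2}+\cdots+x-1$ equals $(x^n-2x+1)/(x-1)$ or similar, so its discriminant is computable in closed form as a trinomial-type discriminant (compare the computation in Lemma~\ref{lem:Elkies}), and I would read off a prime witnessing a single transposition.

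For part~\eqref{it:2martin}, the plan is to combine the transposition from~\eqref{it:1martin} with a second cycle-type input, namely a $q$-cycle for a prime $q>(n-1)/2$, and then invoke the classical fact (Gallagher's lemma, \cite{Gal72}*{Lemma p.~98}, already cited in the proof of Theorem~\ref{Th:Gallagher}) that no proper transitive subgroup of $\mathfrak S_{n-1}$ contains both a transposition and such a long prime cycle. The hypothesis that $n$ is odd should be what guarantees, through a parity or quadratic-residue computation on the discriminant, that the group is not contained in $\mathfrak A_{n-1}$ and that the requisite long cycle appears upon reduction modulo an appropriate prime (producing an irreducible factor of prime degree $>(n-1)/2$). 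For part~\eqref{it:3martin}, when $n-1$ is prime the argument is shorter: I would show $f(x)=T(C_n^j;x,-1)$ is itself irreducible over $\Q$ of prime degree $n-1$, so $\gal_\Q(f)$ is a transitive group of prime degree containing a transposition; a transitive permutation group of prime degree containing a transposition is all of $\mathfrak S_{n-1}$, giving $\gal_\Q(f)\simeq\mathfrak S_{n-1}$ directly, and then Proposition~\ref{prop:spec} upgrades this to $G_{\Q,y}(T(C_n^j;x,y))\simeq\mathfrak S_{n-1}$.

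The main obstacle I anticipate is the explicit discriminant analysis of $x^{n-1}+\cdots+x-1$: establishing, uniformly in $n$, both that a transposition is present (part~\eqref{it:1martin}) and that for odd $n$ the group escapes $\mathfrak A_{n-1}$ and acquires a long prime cycle (part~\eqref{it:2martin}) requires a clean factorization of this discriminant and the existence of good primes, which is the delicate number-theoretic step. I expect the irreducibility of $f$ in the prime-degree case~\eqref{it:3martin}, the transitivity from Corollary~\ref{cor:dMMN}, and the group-theoretic conclusions from Gallagher's lemma to be comparatively routine once the discriminant computation is in hand.
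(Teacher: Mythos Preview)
Your approach to part~\eqref{it:1martin} is essentially that of the paper, though be careful to distinguish the two mechanisms you mention: a \emph{separable} factorization pattern $(1^{n-3},2)$ modulo some prime would exhibit a \emph{Frobenius} element that is a transposition, whereas a prime at which $f$ acquires exactly one double root produces a transposition via the \emph{inertia} group. The paper takes the latter route (Lemma~\ref{lem:Selmer}\eqref{itselmer2},\eqref{itselmer3}): one shows $|\disc(f)|$ has an odd prime divisor $p$, and that modulo any such $p$ the trinomial $g(x)=(x-1)f(x)=x^n-2x+1$ has a unique multiple root, of multiplicity exactly~$2$, so the inertia subgroup at $p$ is generated by a transposition.

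Your plan for part~\eqref{it:2martin} has a genuine gap. Producing a $q$-cycle with $q>(n-1)/2$ prime would require, for each odd~$n$, a prime $\ell$ at which $f$ has an irreducible factor of that degree; there is no uniform mechanism for this, and the hypothesis ``$n$ odd'' gives no purchase on it. The paper's argument is different and is the idea you are missing: over~$\Q$ the Galois group is generated by its inertia subgroups (no nontrivial extension of~$\Q$ is everywhere unramified). For $n$ odd one checks that $f$ is separable modulo~$2$ (the equations $r^n=1$ and $nr^{n-1}=0$ in $\F_2$ are incompatible since $n$ is odd), so every ramified prime is odd; by the analysis underlying part~\eqref{it:1martin} each such inertia group is generated by a transposition. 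Hence $\gal_\Q(f)$ is a transitive subgroup of $\mathfrak S_{n-1}$ generated by transpositions, and therefore equals $\mathfrak S_{n-1}$ (\cite{Osa}*{Lemma~5}).

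Your argument for part~\eqref{it:3martin} is correct and in fact cleaner than the paper's. Once $f$ is irreducible over~$\Q$ (Lemma~\ref{lem:Selmer}\eqref{itselmer1}, via Perron/Selmer), transitivity in prime degree $n-1$ already gives primitivity, and a primitive group containing a transposition is $\mathfrak S_{n-1}$ (\cite{DiMo}*{Th.~3.3A}). The paper instead invokes \cite{Mar04} to show $f$ is irreducible modulo $p=n-1$, extracting a $p$-cycle before concluding primitivity; your route bypasses this step entirely.
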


As we will see in the proof,~\eqref{it:3martin} of the statement is deduced from~\cite{Mar04}*{Th.~3.4}. Recall also that Remark~\ref{rem:basefieldext} explains why it is enough to prove the statement in the case~$k=\Q$.
We first state a preparatory result, and proceed by drawing a second proof of Theorem~\ref{th:CnCase} in the
case where~$k$ is a number field.

\begin{lemma}\label{lem:Selmer}
Let~$n\in\N_{\geq 2}$ and let
$
f(x)=x^{n-1}+x^{n-2}+\cdots+x-1
\in\Z[x]$. Let~$G_f$ denote the Galois group of a splitting field of~$f$ over~$\Q$. Then the following hold:
\begin{enumerate}
  \item\label{itselmer1} $f$ is irreducible over~$\Q$; 
  \item\label{itselmer2} if~$n\geq 3$,
then the discriminant of the polynomial~$f$ has an odd prime divisor (in other words,~$|\disc(f)|$ is not a power of~$2$);
\item\label{itselmer3} if~$p$ is an odd prime divisor of~$|\disc(f)|$ then the inertia subgroup at~$p$ (defined up to conjugation and seen as a permutation subgroup of the complex roots of~$f$) is generated by a transposition, and
\item\label{itselmer4} assuming that~$n$ is odd, we have~$G_f\simeq \mathfrak S_{n-1}$.
\end{enumerate}
\end{lemma}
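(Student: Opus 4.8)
The plan is to route everything through the auxiliary trinomial $g(x)=(x-1)f(x)=x^{n}-2x+1$, which has the obvious root $x=1$ and is pleasant to differentiate. For part~\eqref{itselmer1} I would instead pass to the reciprocal polynomial $f^{*}(x)=x^{n-1}-x^{n-2}-\cdots-x-1$ (the ``multinacci'' polynomial), which is irreducible exactly when $f$ is, since $f(0)=-1\neq0$ forces every factorization to dualize to one of $f^{*}$. Its irreducibility is classical and I would reprove it by root location: from $(x-1)f^{*}(x)=x^{n}-2x^{n-1}+1$ one checks (a short argument on $|x|=1$, where $|x-2|=1$ forces $x=1$) that $f^{*}$ has no root on the unit circle and a single root $\theta\in(1,2)$ outside it. In any factorization $f^{*}=AB$ over $\Z$, the factor not containing $\theta$ has all roots strictly inside the unit disc, so its nonzero integer constant term has absolute value $<1$, a contradiction unless that factor is constant.

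For the discriminant I would use $\disc(g)=\disc(f)\,f(1)^{2}=\disc(f)(n-2)^{2}$ together with the trinomial discriminant formula to obtain the closed form
\[
\disc(f)=(-1)^{n(n-1)/2}\,\frac{n^{n}-2^{n}(n-1)^{n-1}}{(n-2)^{2}}.
\]
Part~\eqref{itselmer3} then follows from a critical-point computation: a common root of $g,g'$ satisfies $n\alpha^{n-1}=2$ and $\alpha^{n}=2\alpha-1$, whence $\alpha=\alpha_{0}:=n/\bigl(2(n-1)\bigr)$ is the \emph{only} candidate. This rules out $p\mid n(n-1)(n-2)$ as odd divisors of $\disc(f)$ (for those the system is inconsistent or places the double point at $x=1$, leaving $f$ separable), so any odd $p\mid\disc(f)$ has $p\nmid2n(n-1)(n-2)$, the point $\alpha_{0}\neq1$ is an exact double root of $g$ lying off the factor $(x-1)$ (as $g''(\alpha_{0})\not\equiv0$), and $f\bmod p$ has a single repeated factor, the square of a distinct linear factor. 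Hence $v_{p}(\disc f)=1$, Dedekind applies, $p$ is tamely ramified through one prime of ramification index $2$, and the inertia generator is a transposition. For part~\eqref{itselmer2}, when $n$ is odd the displayed formula makes $\disc(f)$ odd, while irreducibility plus Minkowski's bound give $|\disc(f)|>1$, so an odd prime divides it; when $n$ is even I would compute the $2$-adic valuations of numerator and denominator and reduce to checking that $|n^{n}-2^{n}(n-1)^{n-1}|/(n-2)^{2}$ is not a power of $2$, which requires a short but genuine case analysis (the cancellation case $n\equiv2\bmod4$ being the delicate one).

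Finally, for part~\eqref{itselmer4} with $n$ odd the degree $m=n-1$ is even. Irreducibility gives transitivity of $G_{f}\le\mathfrak S_{m}$, and parts~\eqref{itselmer2}--\eqref{itselmer3} supply a transposition. The finish I would target is to exhibit a prime $\ell$ with $f\bmod\ell$ splitting as (linear)$\times$(irreducible of degree $n-2$), yielding an $(n-2)$-cycle in $G_{f}$; a transitive group containing such an $(m-1)$-cycle is $2$-transitive, hence primitive, and by Jordan's theorem a primitive group containing a transposition is $\mathfrak S_{m}$. I expect the \emph{main obstacle} to be exactly the production of such an $\ell$ (equivalently, of a primitivity-forcing element) uniformly in odd $n$: a tower/discriminant computation shows that transitivity together with a single tamely ramified transposition is compatible with imprimitivity, so one cannot conclude from parts~\eqref{itselmer1}--\eqref{itselmer3} alone. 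The extra input tailored to $n$ odd — either a dedicated reduction manufacturing the $(n-2)$-cycle or an appeal to a known Galois-group result for these multinacci-type polynomials — is the crux, and is precisely where the hypothesis that $n$ is odd must be used.
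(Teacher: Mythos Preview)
Your approach to parts~(1)--(3) is sound and close to the paper's. For~(1) the paper simply quotes Perron/Selmer on $g(x)=x^n-2x+1$ rather than arguing via the multinacci reciprocal, but your root-location argument is a valid alternative. For~(3) the paper avoids the unjustified claim $v_p(\disc f)=1$ and instead argues directly: any $\sigma$ in the inertia group at~$\mathfrak p$ satisfies $\sigma(\alpha)\equiv\alpha\pmod{\mathfrak p}$ for every root~$\alpha$, and since exactly two roots of~$f$ reduce to the unique double root~$\alpha_0$ of~$\bar f$, any nontrivial~$\sigma$ must be the transposition swapping those two. This gives the same conclusion with less to check.

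The real gap is in part~(4), and you have correctly located it: you need a primitivity-forcing element but do not know how to produce one. The paper's route bypasses this entirely. The hypothesis that~$n$ is odd is used not to manufacture an $(n-2)$-cycle, but to show that the prime~$2$ is \emph{unramified} in the splitting field of~$f$: modulo~$2$ one has $g(x)\equiv x^n+1$ and $g'(x)\equiv nx^{n-1}\equiv x^{n-1}$ (as $n$ is odd), and these have no common root, so $f\bmod 2$ is separable. Hence every ramified prime is odd, and by part~(3) every nontrivial inertia subgroup is generated by a transposition. Now invoke Minkowski's theorem that~$\Q$ admits no nontrivial unramified extension: $G_f$ is generated by the union of its inertia subgroups, so $G_f$ is a transitive subgroup of~$\mathfrak S_{n-1}$ generated by transpositions. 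Such a group is necessarily all of~$\mathfrak S_{n-1}$ (e.g.\ by Osada's Lemma~5, or the elementary fact that the transpositions in a transitive group generate a transitive group, hence a product of full symmetric groups on the blocks of a system they preserve---but transitivity forces a single block). No $(n-2)$-cycle, no Chebotarev-type search, and no separate primitivity argument are needed.
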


Before proving the Lemma, we use it to give a second proof of Theorem~\ref{th:CnCase} in the case where~$k$ is a number field (which reduces to the case~$k=\Q$ by Remark~\ref{rem:basefieldext}).

\begin{proof}[Alternative proof of Theorem~\ref{th:CnCase} for~$k=\Q$] As mentioned after Definition~\ref{def:thick} the case of an~$n$-cycle corresponds to~$j=1$. Recall also that Corollary~\ref{cor:dMMN} guarantees that~$T_{C_n}$ is irreducible seen as an~$x$-polynomial with coefficients in~$\Q(y)$. Therefore, combining~\eqref{eq:Spec-1} with Proposition~\ref{prop:spec} and Lemma~\ref{lem:Selmer}\eqref{itselmer2} and~\eqref{itselmer3}, the Galois group~$\gal_{\Q(y)}(T_{C_n})$ is isomorphic to a transitive subgroup of~$\mathfrak S_{n-1}$ containing a transposition. Moreover, as mentioned in Remark~\ref{rem:bivariate}, we may combine~\cite{DGT}*{Cor.~1} and~\cite{Tur}*{Lem.~3.1} to show that this Galois group acts as a primitive permutation group of degree~$n-1$. A primitive permutation group of degree~$n-1$ containing a transposition is necessarily isomorphic to~$\mathfrak S_{n-1}$ (see~\cite{DiMo}*{Th.~3.3A}).
\end{proof}

\begin{proof}[Proof of Lemma~\ref{lem:Selmer}]
  For~\eqref{itselmer1}, we first set 
\[
g(x)=(x-1)f(x)=(x-1)\cdot \bigg(\frac{x^n-1}{x-1}-2\bigg)=x^n-2(x-1)-1=x^n-2x+1.
\]
We then apply a result due to Perron (see \emph{e.g.}~\cite{Sel}*{Th.~2}) asserting that~$f(x)$ (\emph{i.e.} the quotient of~$g(x)$ by~$x-1$) is irreducible over~$\Q$.

\medskip
Next we prove~\eqref{itselmer2}. Fix~$n\geq 3$.
The discriminant~$D$ of~$g$ is given by~(\cite{Swa}*{Th.~2}):
\begin{equation}\label{eq:discg}
D=(-1)^{n(n-1)/2}\big(n^n-(n-1)^{n-1}2^n\big).
\end{equation}
Moreover, recalling the definition of the discriminant of a polynomial in terms of the resultant of the polynomial and its derivative (\cite{Lang}*{Chap.~IV, Prop.~8.3 and~8.5}), we have:
\begin{align}\label{eq:SquareQuotient}
D=(-1)^{\frac{n(n-1)}2}\res(g,g')&=(-1)^{\frac{n(n-1)}2}\prod_{\alpha\text{ root of }g}
g'(\alpha)\\ \nonumber
&= (-1)^{\frac{n(n-1)}2}f(1)\prod_{\alpha\text{ root of }f}
f'(\alpha)\prod_{\alpha\text{ root of }f}(\alpha-1)\\ \nonumber
&=(-1)^{\deg f+\frac{n(n-1)}2}(f(1))^2\prod_{\alpha\text{ root of }f}
f'(\alpha)=(f(1))^2\mathrm{disc}(f).
\end{align}
Since~$f(1)=n-2$, we obtain:
\[
|\mathrm{disc}(f)|=\frac{|n^n-(n-1)^{n-1}2^n|}{(n-2)^2}.
\]

If~$n$ is odd then the right hand side is odd as well and we are done\footnote{Note that~$\disc(f)\neq\pm 1$ because~$f$ is irreducible and has degree~$>1$ and thus generates a ramified extension of the rationals.}. Let us assume that~$n=2k'$ ($k'\geq 2$). Then, up to sign, we have
\[
|\disc(f)|=\frac{|(2k')^{2k'}-(2k'-1)^{2k'-1}2^{2k'}|}{4(k'-1)^2}
=2^{2(k'-1)}\frac{|k'^{2k'}-(2k'-1)^{2k'-1}|}{(k'-1)^2}
\]
If~$k'$ is even, then the right-most factor is odd\footnote{Here we note that the numerator of the fraction of the right-most member is~$\geq k'^{2k'-1}$ for~$k'\geq 2$ and therefore the fraction cannot be~$1$.} and we are done. Therefore we assume that~$n=2(2k+1)$ (\emph{i.e.}~$k'=2k+1$) for some~$k\geq 1$. We get
\[|\disc(f)|=2^{4k}\frac{|(2k+1)^{2(2k+1)}-(4k+1)^{4k+1}|}{4k^2}\]
Let us investigate the parity of the second factor.
By Newton's formula:
\[
(2k+1)^{2(2k+1)}=\sum_{j=0}^{4k+2}\binom{4k+2}{j}(2k)^j,\qquad
(4k+1)^{4k+1}=\sum_{i=0}^{4k+1}\binom{4k+1}{i}(4k)^i.
\]
The contributions of the indices~$i=j=0$ are both~$1$. They subtract to~$0$. Moreover we see that the contributions of the indices:
\begin{enumerate}
\item[(i)]~$j=1$ and~$i=1$ are respectively~$(2k+1)4k$ and~$(4k+1)4k$. The difference subtracts to an integer divisible by~$8k^2$.
\item[(ii)]~$j=2$ and~$i=2$ are respectively~$4k^2(2k+1)(4k+1)$ and~$16k^2 2k(4k+1)$ and the difference of these two terms is divisible par~$4k^2=2^{2\ell+2}$ but not~$8k^2$ since~$(2k+1)(4k+1)$ is odd.
\item[(iii)]~$j\geq 3$ and~$i\geq 3$ are integers divisible by~$8k^2$.
\end{enumerate}
We conclude that 
\begin{equation}\label{eq:disccompute}
\frac{|(2k+1)^{2(2k+1)}-(4k+1)^{4k+1}|}{4k^2}\equiv |(4k+1)(2k+1-8k)|\equiv 1\bmod 2.
\end{equation}
Therefore,~$|\disc(f)|$ admits an odd prime divisor as soon as the left-hand side of~\eqref{eq:disccompute} is not equal to~$1$.
We claim that indeed one has
\[
(4k+1)^{4k+1}-(2k+1)^{2(2k+1)} > 4k^2.
\]
Notice first that the inequality is true if~$k\in\{1,2\}$ by direct computation and suppose
now that~$k\ge3$. It is enough to prove that
\[
    \frac{(4k+1)^{4k+1}}{(2k+1)^{4k+2}} > (4k+1)^2,
\]
that is
\[
    {\left(\frac{4k+1}{2k+1}\right)}^{4k-1} > (2k+1)^3.
\]
Taking the logarithm on both sides, we see that it suffices that
\[
    (4k-1)\ln(13/7) > 3\ln(2k+1),
\]
because~$x\mapsto\frac{4x+1}{2x+1}=1+\frac{2x}{2x+1}$ is increasing as a function of~$x$
and we assumed that~$k\ge3$.
This last inequality holds for all~$k\ge3$ as can be seen by a quick analysis
of the function~$x\mapsto (4x-1)\ln(13/7)-3\ln(2x+1)$ for~$x>1$.
We have proved that~$|\disc(f)|$ is not a power of~$2$.

\medskip
We turn to~\eqref{itselmer3}.
Our method mimics the argument of Osada~\cite{Osa}*{Proof of Th.~1}. 

Let~$K$ be the splitting field (inside the complex numbers) of~$f$ (equivalently, of~$g$) over~$\Q$. Let~$p$ be a prime number ramified in~$K/\Q$ and let~$\mathfrak p$ be a prime ideal of~$\mathcal O_K$ lying over~$p$. We let~$I(\mathfrak p\slash p)$ denote the inertia subgroup of~$G_f$ relative to~$p$ and~$\mathfrak p$. Let~$\sigma$ be a non trivial element of~$I(\mathfrak p\slash p)$; thus~$\sigma(\alpha)\neq \alpha$ for some root~$\alpha$ of~$f$. Since~$\sigma\in I(\mathfrak p\slash p)$, we have~$\sigma(\alpha)\equiv\alpha (\bmod\ \mathfrak p)$, meaning that~$\alpha \bmod\ \mathfrak p$ is a multiple root of the reduction of~$f$ modulo~$p$. In particular~$g (\bmod\ p)\in \F_p[x]$ has a multiple root. Let us show that~$g (\bmod\ p)$ has at most one multiple root, and that in this case, its multiplicity is~$2$. We will deduce that for any root~$\beta\in K$ of~$f$, the necessary congruence~$\sigma(\beta)\equiv \beta (\bmod\ \mathfrak p)$ implies~$\sigma(\beta)=\beta$ and thus~$\sigma$, seen as a permutation of the roots of~$f$ is the transposition,~$(\alpha \sigma(\alpha))$.

Let~$r$ be a multiple root of~$f(x)$ modulo some odd prime~$p$ ramified in~$K/\Q$. It is also a multiple root of~$g(x)=x^n-2x+1$, seen as an element of~$\F_p[x]$. Then~$r^n=2r-1$ and~$r$ is also a root of the derivative~$nx^{n-1}-2$ of~$g$. Thus~$nr^n=2r$. In particular~$p\nmid n$: indeed, by contradiction, if~$p\mid n$, then~$nr^n=2r=0$ which would imply~$p=2$ since~$r\neq 0$. This contradicts the fact that~$p$ is odd. Hence~$2r-1=2r/n$ and in turn~$2r(1-1/n)=1$. In particular~$p\nmid n-1$. We conclude that~$r=n/(2(n-1))$ is the only possible multiple root of~$x^n-2x+1\in \F_p[x]$. Finally the second derivative of~$g$ is~$n(n-1)x^{n-2}$ which only vanishes at~$0$ (recall~$p\nmid n$ and~$p\nmid n-1$). Thus~$r$ has mulitplicity~$<3$ as a root of~$g$.

\medskip
Finally we prove~\eqref{itselmer4}.  Since~$n\geq 2$ and~$n$ is odd, we have~$n\geq 3$. First remark that the Galois group~$G_f$ of~$f$ over~$\Q$ is the same as the Galois group of~$g$ over~$\Q$. By~\eqref{itselmer1}, the group~$G_f$ is a transitive subgroup of~$\mathfrak S_{n-1}$. By~\cite{Osa}*{Lemma 5}, it suffices to show that~$G_f$ (seen as a subgroup of~$\mathfrak S_{n-1}$) can be generated by transpositions to conclude that~$G_f\simeq \mathfrak S_{n-1}$. To do so, we use the fact that~$G_f$ is generated by the union over prime numbers~$p$ of the inertia subgroups of~$G_f$ at~$p$ (a consequence of Galois theory combined with the fact that no non-trivial extension of~$k=\Q$ is unramified). 

Note that~$f$ does not have a multiple root modulo~$2$. Indeed, assume by contradiction that~$r$ is such a root; it is a multiple root of~$g$ and by the same computation as the one performed in the proof of~\eqref{itselmer3}, we have:
\[
r^n=2r-1=1\,,\qquad nr^{n-1}=0.
\]
These two equalities are not compatible, since~$n$ is odd. This implies that all the ramified primes in the splitting field of~$f$ over~$\Q$ are odd and by~\eqref{itselmer3} all the non trivial inertia subgroups of~$G_f$ are generated by a transposition. This concludes the proof.
\end{proof}

We are now ready to prove Theorem~\ref{th:ThickCnCase}; our argument combines Lemma~\ref{lem:Selmer} and Proposition~\ref{prop:spec}.

\begin{proof}[Proof of Theorem~\ref{th:ThickCnCase}]
We first prove~\eqref{it:1martin}. Recall~\eqref{eq:Spec-1} and apply Proposition~\ref{prop:spec} for the choice~$R=k[y]$, ~$f(x)=T(C_n^j;x,y)$ (irreducible over~$R$ by Corollary~\ref{cor:dMMN}, and monic as a polynomial with coefficients in~$k[y]$) and where the ring homomorphism we choose is reduction modulo~$y+1$. The image of the polynomial~$T(C_n^j;x,y)$ by such a morphism is the~$\Q$-polynomial on the right hand side of~\eqref{eq:Spec-1}.
We conclude by combining Lemma~\ref{lem:Selmer}\eqref{itselmer1},~\eqref{itselmer2} and~\eqref{itselmer3}.

The proof of~\eqref{it:2martin} follows from the same specialization argument (Proposition~\ref{prop:spec}) as in the proof of~\eqref{it:1martin},
combined this time with Lemma~\ref{lem:Selmer}\eqref{itselmer4}.

Finally we prove~\eqref{it:3martin}. 
This is deduced from~\cite{Mar04}*{Th.~3.4} which asserts that the polynomial~$T(C_{p+1}^j;x,-1)$ is irreducible modulo~$p\coloneqq n-1$. Let~$h(x)=x^p-\sum_{i=0}^{p-1}x^i$. Note that for odd~$j$, this is the opposite of the reciprocal of~$T(C_{p+1}^j;x,-1)$ (see~\eqref{eq:Spec-1}). We further denote by
~$h_p(x)=x^p-\sum_{i=0}^{p-1}x^i\in\F_p[x]$ the reduction of~$h$ modulo~$p$. It is enough to prove that~$h_p$ is irreducible to deduce the result. Indeed by virtue of~\eqref{eq:discg} and~\eqref{eq:SquareQuotient} the prime~$p$ does not divide the discriminant of~$f$ (recall that a polynomial with non zero constant coefficient and its reciprocal have, up to sign and a power of the constant coefficient, the same discriminant), therefore if~$h_p$ is irreducible, the Galois group of~$h$ over~$\Q$ contains a~$p$-cycle and is a subgroup of~$\mathfrak S_p$. It is therefore a transitive permutation group of prime degree, hence it is a primitive permutation group. Since it also contains a transposition (by Lemma~\ref{lem:Selmer}\eqref{itselmer2} and~\eqref{itselmer3}), we conclude that~$\gal_\Q(h)\simeq \mathfrak S_p$ by~\cite{DiMo}*{Th.~3.3A}. Finally we apply once more Proposition~\ref{prop:spec}.
\end{proof}

\subsubsection{Case~$n-1\in\{p,p^2\}$ and~$-j$ nonsquare mod~$p$}
We prove that Conjecture~\ref{conj:BCM} holds for~$T(C_n^j;x,y)$ for extra values of~$n$ and~$j$. This time we proceed by specializing the Tutte polynomial at~$y=1$:
\[
T(C_n^j;x,1)\eqqcolon h(x)=x^{n-1}+j\sum_{k=0}^{n-2}x^k.
\]

\begin{theorem}\label{th:ThickCnCase-2}
Let~$k/\Q$ be a finite field extension. If
\begin{enumerate}
\item~$n-1\in\{p,p^2\}$ for some prime number~$p\geq 5$, \item~$-j$ is a nonsquare modulo~$p$, and
\item~$\gcd(n,j-1)=1$,
\end{enumerate}
then~$G_{k,y}(T(C_n^j;x,y))\simeq\mathfrak S_{n-1}$.
\end{theorem}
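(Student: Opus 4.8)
The plan is to reduce the whole statement to a one–variable specialization and then run the classical ``a primitive permutation group containing a transposition is symmetric'' argument. By Remark~\ref{rem:basefieldext} it suffices to treat $k=\Q$. Since $C_n^j$ has connected cycle matroid and, by~\eqref{eq:TutteThickCycle}, the coefficient of $x$ in $T(C_n^j;x,y)$ is $1$, Corollary~\ref{cor:dMMN} shows $T(C_n^j;x,y)$ is irreducible over $\Q(y)$, so $G\coloneqq G_{\Q,y}(T(C_n^j;x,y))$ is a transitive subgroup of $\mathfrak S_{n-1}$. Specializing $y\mapsto 1$ via Proposition~\ref{prop:spec} embeds $\gal_\Q(h)$ into $G$, where
\[
h(x)=T(C_n^j;x,1)=x^{n-1}+j\tfrac{x^{n-1}-1}{x-1},\qquad g(x)\coloneqq (x-1)h(x)=x^{n}+(j-1)x^{n-1}-j.
\]
It then remains to exhibit a transposition in $G$ and to prove that $G$ is primitive, after which $G\simeq\mathfrak S_{n-1}$ by~\cite{DiMo}*{Th.~3.3A}.

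The engine is a reduction of $g$ modulo $p$. Writing $n-1=p^{\varepsilon}$ with $\varepsilon\in\{1,2\}$, a root $\alpha$ of $g$ in $\overline{\F_p}$ satisfies $\alpha^{p^{\varepsilon}}(\alpha+j-1)=j$, i.e.\ $\phi^{\varepsilon}(\alpha)=M(\alpha)$, where $\phi$ is Frobenius and $M$ is the Möbius transformation attached to $\left(\begin{smallmatrix}0&j\\1&j-1\end{smallmatrix}\right)\in\mathrm{PGL}_2(\F_p)$. The characteristic polynomial of this matrix is $(\lambda-j)(\lambda+1)$, so $M$ is diagonalizable over $\F_p$ with fixed points $x=1$ and $x=-j$ (both roots of $g$, the former split off by $h$) and multiplier $-j$; hence $M$ has order $d\coloneqq\mathrm{ord}_{\F_p^{\times}}(-j)$ in $\mathrm{PGL}_2(\F_p)$. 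Because $-j$ is a nonsquare, $d$ is even with $v_2(d)=v_2(p-1)$, and in particular $-j\neq1$, so $g\bmod p$ is separable and $p\nmid\disc(h)$, which lets Dedekind's theorem read off the Frobenius cycle type. Diagonalizing $M$ to $z\mapsto-jz$, the non-fixed roots become the solutions of $z^{p^{\varepsilon}-1}=-j$. For $\varepsilon=1$ these fall into $(p-1)/d$ Frobenius orbits of length $d$. For $\varepsilon=2$ one computes $\phi^{2m}(z)=(-j)^{m}z$, so an orbit of odd length would force $z^{p-1}=(-j)^{-m}\in\F_p$ and hence $(z^{p-1})^{2}=z^{p^{2}-1}=-j$ to be a square in $\F_p$, which is impossible; thus every non-fixed orbit has the even length $2d$, and there are $(p^2-1)/(2d)$ of them. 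In either case $h\bmod p$ is one linear factor times factors all of a single degree ($d$, respectively $2d$).

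Primitivity then follows. For $n-1=p$ it is automatic, a transitive group of prime degree being primitive. For $n-1=p^2$ the only nontrivial blocks have size $p$, so I would take the Frobenius element $\sigma\in\gal_\Q(h)\le G$, which fixes exactly one point $x_0$ and all of whose remaining cycles have length $L=2d$. The block $B$ through $x_0$ is $\sigma$-invariant (it is the unique block containing the fixed point), whence $B\setminus\{x_0\}$ is a union of full $\sigma$-cycles and $L\mid(p-1)$. This contradicts $v_2(L)=v_2(p-1)+1>v_2(p-1)$, so no block system of size $p$ can exist and $G$ is primitive. Here the hypothesis that $-j$ be a nonsquare is exactly what both suppresses the odd orbits and forces $L\nmid(p-1)$.

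The last and hardest step is to produce a transposition. Following Osada's method as in Lemma~\ref{lem:Selmer}\eqref{itselmer3}, since $g'=x^{n-2}\bigl(nx+(n-1)(j-1)\bigr)$ has the single relevant critical point $x^{\ast}=-(n-1)(j-1)/n$, any odd prime $\ell\mid\disc(h)$ with $\ell\nmid n(n-1)(j-1)$ makes $g\bmod\ell$ have a unique multiple root, of multiplicity exactly $2$ (one checks $g''(x^{\ast})\neq0$ precisely because $j\not\equiv1$), so the inertia at $\ell$ is generated by a transposition lying in $\gal_\Q(h)\le G$. The main obstacle I foresee is guaranteeing that such an $\ell$ exists: from the trinomial discriminant formula~\cite{Swa}*{Th.~2} one has
\[
\disc(g)=\pm(-j)^{n-2}\bigl(n^{n}(-j)-(-1)^{n}(n-1)^{n-1}(j-1)^{n}\bigr)=h(1)^{2}\disc(h),
\]
and I would need to show that the bracketed factor is not, up to sign, supported only on primes dividing $2n(n-1)(j-1)$. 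I expect this to come from a size-and-valuation argument in the spirit of Lemma~\ref{lem:Selmer}\eqref{itselmer2}, using $\gcd(n,j-1)=1$ to separate the contributions of $n^{n}$ and of $(n-1)^{n-1}(j-1)^{n}$ modulo small primes, together with $\disc(h)\neq\pm1$ (the reduction mod $p$ already exhibits an irreducible factor of degree $\geq d\geq2$, so by Minkowski the associated field is ramified). Granting this, $G$ is transitive, primitive and contains a transposition, and $G\simeq\mathfrak S_{n-1}$ follows from~\cite{DiMo}*{Th.~3.3A}.
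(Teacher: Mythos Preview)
Your primitivity argument is correct and genuinely different from the paper's. The paper does not analyze $g\bmod p$ at all; instead it invokes the Movahhedi--Salinier criterion (recorded as Proposition~\ref{prop:MoSa-gen}, which is~\cite{MoSa94}*{Cor.~1 to Th.~3}) applied to the trinomial $g(x)=x^{n}+(j-1)x^{n-1}-j$, and for this it first needs $h$ irreducible over~$\Q$ via Harrington's theorem (Lemma~\ref{lem:irredat1}). Your route sidesteps both black boxes: the M\"obius/Frobenius description of the roots of $g$ over~$\F_p$ gives a Frobenius element $\sigma\in G$ with exactly one fixed point and all remaining cycles of a single even length~$L$, and then the block--through--the--fixed--point argument kills any putative block system of size~$p$ because $v_2(L)=v_2(p-1)+1$. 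One slip to fix: the line ``$(z^{p-1})^{2}=z^{p^{2}-1}$'' is wrong; you want $(z^{p-1})^{p+1}=z^{p^2-1}=-j$, and since $z^{p-1}\in\F_p^\times$ and $p+1$ is even this exhibits $-j$ as a square, the desired contradiction. (Alternatively, and more simply: an odd orbit length~$L$ gives $\phi^{2L}(z)=(-j)^{L}z=z$, so $d\mid L$ with $d$ even, impossible.)

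The real gap is the transposition step, which you flag but do not close. A ``size--and--valuation'' argument \`a la Lemma~\ref{lem:Selmer}\eqref{itselmer2} is not what makes this work; the paper's key observation is much cleaner. Write the bracketed factor (up to sign) as
\[
D_0=(-j)\,n^{n}+(-1)^{n-1}(n-1)^{n-1}(j-1)^{n}.
\]
Since $n\equiv 1\pmod{n-1}$ and $p\mid n-1$, one has $D_0\equiv -j\pmod p$, which is a nonsquare by hypothesis. Hence $D_0$ is not a perfect square in~$\Z$, and there exists a prime~$\ell$ with $v_\ell(D_0)$ odd. Two quick checks then pin down~$\ell$: if $\ell\mid j-1$ then $\ell\mid D_0$ forces $\ell\mid j n^{n}$, whence $\ell\mid n$ (as $\gcd(j,j-1)=1$), contradicting $\gcd(n,j-1)=1$; and if $\ell\mid j$ then $\ell\mid D_0$ forces $\ell\mid(n-1)$, i.e.\ $\ell=p$, contradicting $-j\not\equiv 0\pmod p$. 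Thus $\ell\nmid j(j-1)$, and since $D_f=\pm(-j)^{\,n-2}D_0$ this makes $v_\ell(D_f)$ odd. The paper then applies its Lemma~\ref{lem:sqdisc} (which packages~\cite{LNV84}*{Th.~2} and~\cite{MoSa94}*{Lem.~5}) to conclude that inertia at~$\ell$ contributes a transposition. Note in particular that the needed coprimality is only $\ell\nmid j(j-1)$, not your $\ell\nmid n(n-1)(j-1)$, and that you also omitted $\ell\nmid j$; in a bare Osada argument one would further have to rule out $x^\ast\equiv 1\pmod\ell$ so that the double root lies among the roots of~$h$, a point absorbed by the cited lemma.
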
 

As before, it is enough, by Remark~\ref{rem:basefieldext} to consider the case~$k=\Q$. We will need the following preparatory result.

\begin{lemma}\label{lem:sqdisc}
Let~$a,b\in\Q$ and let~$m>k>0$ be integers such that~$\gcd(m,k)=1$. Consider the trinomial~$f(x)=x^m+ax^k+b\in\Q[x]$. Let~$h(x)\in\Z[x]$ be an irreducible factor of~$f(x)$ and assume that
\begin{enumerate}
\item[(i)]~$D_f/D_h$ is a square in~$\Z$ (here~$D_g$ denotes the discriminant of any~$g\in\Q[x]$);
\item[(ii)] there exists a prime number~$p$ such that~$v_p(D_f)$ (the~$p$-adic valuation of~$D_f$) is odd and~$p\nmid ab$. 
\end{enumerate}
Then the Galois group~$\gal_{\Q}(f)$ of the splitting field of~$f$ (seen as a permutation subgroup of the complex roots of~$f$) over~$\Q$ contains a transposition.
\end{lemma}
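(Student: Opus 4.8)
The plan is to exhibit a prime at which the inertia subgroup of the splitting field of $f$ acts as a single transposition, following the strategy used in the proof of Lemma~\ref{lem:Selmer}\eqref{itselmer3}. First I would pass from $D_f$ to $D_h$: since $D_f/D_h$ is a square in $\Z$ by~(i), one has $v_q(D_f)\equiv v_q(D_h)\pmod 2$ for every prime $q$, so assumption~(ii) yields that $v_p(D_h)$ is odd (and that $D_f\neq 0$, whence $f$ is separable over $\Q$). Writing $L=\Q(\alpha)$ for a root $\alpha$ of the monic irreducible $h$ and using $D_h=[\mathcal O_L:\Z[\alpha]]^2\disc(L)$, oddness of $v_p(D_h)$ forces $v_p(\disc L)$ to be odd, so $p$ ramifies in $L$, hence in the splitting field $K$ of $f$ over $\Q$ (which contains the Galois closure of $L$). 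Thus the inertia subgroup $I=I(\mathfrak p\slash p)\subseteq\gal_\Q(f)=\gal(K/\Q)$ at some prime $\mathfrak p\mid p$ of $K$ is nontrivial.

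The heart of the argument is the reduction $\bar f\coloneqq f\bmod p\in\F_p[x]$, analysed exactly as in Lemma~\ref{lem:Selmer}\eqref{itselmer3}. Since $p\nmid b$ we have $\bar f(0)\neq 0$, so a multiple root $r$ of $\bar f$ is nonzero and satisfies $\bar f'(r)=r^{k-1}(m r^{m-k}+ak)=0$, that is, $m r^{m-k}=-ak$. Using $p\nmid ab$ and $\gcd(m,k)=1$ one checks that this forces $p\nmid m$, $p\nmid k$ and $p\nmid (m-k)$ (otherwise $r=0$, or $p\mid b$, or $p\mid\gcd(m,k)$), and a direct inspection of the case $p=2$ shows $\bar f$ would then be separable; hence $p$ is odd. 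Substituting $r^{m-k}=-ak/m$ into $\bar f(r)=0$ pins down $r^k=-bm/\bigl(a(m-k)\bigr)$; since $\gcd(m-k,k)=\gcd(m,k)=1$, choosing $u,v$ with $uk+v(m-k)=1$ gives $r=(r^k)^u(r^{m-k})^v$, a single element of $\F_p$, so $\bar f$ has at most one multiple root. Finally $\bar f''(r)=r^{k-2}\bigl(-ak(m-k)\bigr)\neq 0$, so this root has multiplicity exactly $2$. As $v_p(D_h)$ is odd we have $p\mid D_h$, so $\bar h$ has a multiple root; because $\bar h\mid\bar f$ it must be this same $r$, and therefore the two roots of $f$ reducing to $r$ are both roots of $h$.

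It then remains to produce from $I$ a transposition. Let $\sigma\in I$ be nontrivial. Since $\sigma$ acts trivially on the residue field at $\mathfrak p$, every root $\beta$ of $f$ satisfies $\sigma(\beta)\equiv\beta\pmod{\mathfrak p}$. A root reducing to a simple root of $\bar f$ is the only root of $f$ in its residue class and is thus fixed by $\sigma$, whereas the two roots $\alpha_1\neq\alpha_2$ reducing to $r$ are either both fixed or interchanged. As $\sigma\neq\mathrm{id}$ it must interchange them, so $\sigma=(\alpha_1\ \alpha_2)$ is a transposition in $\gal_\Q(f)$, swapping two conjugate roots of $h$.

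The main obstacle is the middle step: showing that $\bar f$ has a unique multiple root of multiplicity exactly $2$. This rests on $\gcd(m,k)=1$ (to recover $r$ uniquely from the two power relations) and on $p\nmid ab$ (to keep $r\neq 0$ and to force $p\nmid mk(m-k)$ and $p$ odd), with the second-derivative computation---genuinely exploiting the trinomial shape---ruling out multiplicity $\geq 3$. Hypothesis~(i) plays the more delicate role of guaranteeing, via $v_p(D_h)$ odd, that the double root actually lies in $\bar h$; this is what ensures the resulting transposition interchanges two conjugate roots of $h$, rather than arising from a spurious coincidence modulo $p$ between distinct rational factors of $f$.
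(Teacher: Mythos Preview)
Your proof is correct and follows the same overall strategy as the paper---show that the inertia subgroup at the prime~$p$ from~(ii) acts as a transposition---but you carry out the key step directly rather than by citation. The paper's proof, after deducing (as you do) that~$v_p(D_h)$ is odd and hence~$p$ ramifies in~$\Q(\theta)$, simply invokes~\cite{MoSa94}*{Lemma~5} (which in turn rests on~\cite{LNV84}*{Th.~2}) to conclude that the inertia subgroup at~$p$ in the splitting field of~$h$ is generated by a transposition. You instead unpack this black box: you analyse~$\bar f=f\bmod p$ explicitly, use~$p\nmid ab$ and~$\gcd(m,k)=1$ to force~$p\nmid mk(m-k)$ and~$p$ odd, pin down the unique multiple root~$r$ via the B\'ezout relation~$uk+v(m-k)=1$, and use the second derivative to bound its multiplicity by~$2$. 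This is precisely the trinomial analysis underlying~\cite{MoSa94}*{Lemma~5}, carried out in the spirit of the paper's own Lemma~\ref{lem:Selmer}\eqref{itselmer3}. Your argument has the advantage of being self-contained and of working directly in the splitting field of~$f$ (making it transparent that the transposition really is a transposition on \emph{all} roots of~$f$, not just those of~$h$); the paper's version is shorter but relies on the reader consulting the cited sources.
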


\begin{proof}
We use~\cite{Swa}*{Th.~2} to compute:
\[
D_f=(-1)^{m(m-1)/2}b^{k-1}\big(m^mb^{m-k}+(-1)^{m+1}(m-k)^{m-k}k^ka^m\big).
\]

Let~$\theta$ be a complex root of~$h$ and denote by~$d_{\Q(\theta)}$ the discriminant of the number field~$\Q(\theta)$, then~$D_h/d_{\Q(\theta)}$ is a square in~$\Z$ (the square of the index of~$\Z[\theta]$ in the ring of integers of~$K$). By assumption~(i), we deduce that~$D_f/d_{\Q(\theta)}=(D_f/D_h)\cdot (D_h/d_{\Q(\theta)})$ is a square in~$\Z$.

We deduce that if~$p$ is a prime number satisfying (ii) then~$p$ is ramified in~$\Q(\theta)/\Q$ (\emph{i.e.} it divides~$d_{\Q(\theta)}$). Moreover, applying~\cite{MoSa94}*{Lemma 5} (which builds on~\cite{LNV84}*{Th.~2}), the inertia subgroup relative to any prime ideal~$\mathfrak p$ above~$p$ in the splitting field~$K_h$ of~$h$ over~$\Q$ is generated by a transposition. In turn~$\gal_{\Q}(f)$ contains a transposition.
\end{proof}

Our goal is to apply the above Lemma to the specialization~$h(x)$ at~$y=1$ of~$T(C_n^j;x,y)$. Then~$f(x)=(x-1)h(x)$ is a trinomial to which the Lemma can be applied if 
\begin{enumerate}
\item[(a)]~$h$ is irreducible,
\item[(b)] one can find a prime~$p$ such that~$v_p(D_f)$ is odd and~$p\nmid j(j-1)$ (see the proof of Theorem~\ref{th:ThickCnCase-2} below).
\end{enumerate}

To see that condition (a) holds we invoke~\cite{Har}*{Th.~2} asserting the irreducibility in~$\Q[x]$ of~$T(C_n^j;x,1)$ for~$j>1$.

\begin{lemma}[Harrington]\label{lem:irredat1}
Assume~$n\geq 2$. The specialization at~$y=1$ of~$T(C_n^j;x,y)$ is the~$\Q$-polynomial:
\[
h(x)=x^{n-1}+j\sum_{k=0}^{n-2}x^k.
\]
If~$j>1$, then~$h(x)$ is irreducible over~$\Q$ except if~$(n,j)=(3,4)$.
\end{lemma}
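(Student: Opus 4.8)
The plan is to reduce the irreducibility question to one about an associated trinomial and then attack it with $p$-adic Newton polygons, falling back on the classification of trinomial factorizations for the residual cases. The specialization itself is routine: setting $y=1$ in~\eqref{eq:TutteThickCycle} turns the factor $1+y+\cdots+y^{j-1}$ into $j$ and the factor $y+x+\cdots+x^{n-2}$ into $1+x+\cdots+x^{n-2}$, so that $T(C_n^j;x,1)=x^{n-1}+j\sum_{k=0}^{n-2}x^k=h(x)$. The key structural observation is that clearing the geometric sum by multiplying by $x-1$ produces a trinomial,
\[
(x-1)h(x)=x^{n}+(j-1)x^{n-1}-j\eqqcolon g(x),
\]
which visibly has $x=1$ as a root. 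Since $\deg h=n-1$ and $g=(x-1)h$, proving that $h$ is irreducible over~$\Q$ amounts to showing that the only factorization of $g$ over~$\Q$ is the splitting off of the linear factor $x-1$ from an irreducible factor of degree $n-1$.

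To produce such a factor I would use Newton polygons. Fixing a prime $p\mid j$, one has $p\nmid(j-1)$, so the $p$-adic Newton polygon of $g$ has vertices $(0,v_p(j))$, $(n-1,0)$ and $(n,0)$, and its lower hull is the horizontal edge from $(n-1,0)$ to $(n,0)$ (the unit root, lying near $x=1$) together with the edge from $(0,v_p(j))$ to $(n-1,0)$. When $\gcd(v_p(j),n-1)=1$ this latter edge is primitive and therefore contributes a single factor of $g$ that is irreducible over $\Q_p$ of degree $n-1$; refining the $\Q$-factorization through the $\Q_p$-factorization then forces $h$ itself to be irreducible over~$\Q$. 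Hence $h$ is irreducible as soon as \emph{some} prime $p\mid j$ satisfies $\gcd(v_p(j),n-1)=1$.

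The difficulty, and the main obstacle, is the complementary case in which $\gcd(v_p(j),n-1)>1$ for \emph{every} prime $p\mid j$: there the relevant Newton-polygon edge has interior lattice points and no longer certifies irreducibility. This is precisely where the genuine exception sits, since for $(n,j)=(3,4)$ one has $v_2(4)=2=n-1$ and indeed $h(x)=x^2+4x+4=(x+2)^2$; already the case $n=3$ shows, via the discriminant $j(j-4)$ of $x^2+jx+j$, that $j=4$ is the only reducible instance even when the Newton polygon fails at every prime. To dispose of all the remaining $(n,j)$ uniformly, rather than carrying out a bespoke Capelli/Schinzel-type analysis of the possible factorizations of $g$, I would invoke the classification of factorizations of trinomials of the form $x^n+cx^{n-1}+d$ due to Harrington~\cite{Har}*{Th.~2}, applied here with $c=j-1$ and $d=-j$; this excludes any nontrivial factorization of $h$ for $j>1$ apart from $(n,j)=(3,4)$, completing the proof.
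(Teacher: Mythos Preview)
Your argument is correct, and at its core it rests on the same reference as the paper: the lemma is stated there as Harrington's result with no independent proof, the only justification being the citation of~\cite{Har}*{Th.~2} applied to the trinomial $(x-1)h(x)=x^n+(j-1)x^{n-1}-j$. Your Newton-polygon detour is sound and does dispose independently of the generic case where some prime $p\mid j$ has $\gcd(v_p(j),n-1)=1$, but since you invoke Harrington anyway for the residual cases the detour is superfluous---Harrington's theorem already covers all $(n,j)$ uniformly.
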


\begin{proof}[Proof of Theorem~\ref{th:ThickCnCase-2}]
Recall that 
$G=G_{\Q,y}(T(C_n^j;x,y))$ seen as a permutation group is a transitive subgroup of~$\mathfrak S_{n-1}$ by virtue of Corollary~\ref{cor:dMMN}.

Next let us show that~$G$ contains a transposition. 
Let~$h(x)$ be the specialization of~$T(C_n^j;x,y)$ at~$y=1$ and define~$f(x)\coloneqq (x-1)h(x)=x^n+(j-1)x^{n-1}-j$. Then, as shown in the proof of~\ref{lem:Selmer}\eqref{itselmer2}, the number~$D_f/D_h$ is a square in~$\Z$. Furthermore, as seen in the proof of Lemma~\ref{lem:sqdisc}, one has 
\[
D_f=(-1)^{n(n-1)/2}(-j)^{n-1}D_0 \text{ where } D_0=(-j)n^n+(-1)^{n-1}(n-1)^{n-1}(j-1)^n.
\]
We note that~$D_0 \equiv -j\bmod (n-1)$, therefore~$D_0$ is a non-square modulo~$p$. 
In particular~$D_0$ is not a square in~$\Z$. Let~$\ell$ be a prime such that~$v_\ell(D_0)$ is odd. Then~$\ell\nmid j-1$, otherwise~$\ell$ would divide either~$n$ or~$j$ contradicting the assumption~$\gcd(n,j-1)=1$.

Also note that~$j$ is coprime to~$D_0$ (in particular~$\ell\nmid j$). Indeed a common prime divisor of~$j$ and~$D_0$ would divide~$n-1$ and so that prime divisor would be~$p$. We would obtain~$j\equiv 0 \bmod p$ which contradicts the fact that~$-j$ is a nonsquare modulo~$p$. Therefore~$D$ is a non-square in~$\Z$.

Therefore since~$v_\ell(D_f)=v_\ell(D_0)$ is odd and~$\ell\nmid j(j-1)$ Lemma~\ref{lem:sqdisc} applies and shows that~$\gal_\Q(f)$ seen as a permutation group of the complex roots of~$f$ contains a transposition. In particular~$G$ contains a transposition.

Finally we want to prove that~$G$ is a primitive subgroup of~$\mathfrak S_{n-1}$, which is now enough to conclude by~\cite{DiMo}*{Th.~3.3A}. The proof of primitivity is provided by Proposition~\ref{prop:MoSa-gen} below.

In order to see that the assumptions of Proposition~\ref{prop:MoSa-gen} are satisfied for~$h(x)=T(C_n^j;x,1)$, we appeal to Lemma~\ref{lem:irredat1} and we set~$\phi(x)\coloneqq (x-1)h(x)=x^n+(j-1)x^{n-1}-j$ and so, in the notation of Proposition~\ref{prop:MoSa-gen}, one has~$k=n$,~$s=n-1$,~$a=j-1$ and~$b=-j$.
One cheks that~$\gcd((j-1)(n-1),-jn)=1$ since~$(n-1)$ (resp.~$j-1$) is coprime to both~$n$ and~$-j$. 
\end{proof}

\begin{proposition}\label{prop:MoSa-gen}
Let~$\phi(x)=x^k+ax^{s}+b\in\Z[x]$ for integers~$k\geq 3$ and~$s\in\{1,\ldots,k-1\}$.

Assume that~$\mathrm{gcd}(as(k-s),kb)=1$ and~$s\in \{p,p^2\}$ for some prime number~$p$. If~$\phi(x)=(x-\alpha)h(x)$ for some~$\alpha\in\Z$ and an irreducible~$h\in \Z[x]$, then the Galois group of the splitting field of~$\phi$ over~$\Q$ acts as a primitive permutation group on the set of complex roots of~$h$.
\end{proposition}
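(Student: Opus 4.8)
The plan is to show that no nontrivial block system can be $G$-invariant, where $G=\gal_\Q(\phi)=\gal_\Q(h)$ (the two groups coincide because $\alpha\in\Q$, so adjoining the root $\alpha$ changes nothing) acts on the $n\coloneqq k-1$ complex roots of $h$; transitivity of this action is already guaranteed by the irreducibility of $h$. Equivalently, I would rule out the existence of an intermediate field $\Q\subsetneq F\subsetneq\Q(\theta)$ for $\theta$ a root of $h$, since by the Galois correspondence blocks correspond to such fields. So suppose for contradiction that $G$ preserves a partition into $d$ blocks of common size $e$ with $de=n$ and $1<e,d<n$.

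First I would manufacture a distinguished element of $G$ by a local (Newton polygon) analysis. The coprimality hypothesis $\gcd(as(k-s),kb)=1$ together with $s\in\{p,p^2\}$ forces $p\nmid kb$ and $p\mid s$, whence $\phi'\equiv \overline{k}x^{k-1}\pmod p$, so $\phi$ reduces to a separable polynomial modulo $p$ and $p$ is unramified; thus $p$ contributes only a Frobenius conjugacy class, whose cycle type I will record. The useful ramification instead sits at a prime $\ell\mid b$: there $v_\ell(a)=0$ and $\ell\nmid s$ by the gcd hypothesis, so the $\ell$-adic Newton polygon of $\phi$ has one segment of horizontal length $s$ joining $(0,v_\ell(b))$ to $(s,0)$ and a flat segment of length $k-s$. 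When $\gcd(v_\ell(b),s)=1$ this steep segment is tame and Eisenstein, so the $s$ roots lying on it generate a totally and tamely ramified extension of degree $s$; a generator $\sigma$ of the (cyclic) inertia at $\ell$ therefore acts on the roots of $h$ as a single $s$-cycle---that is, a $p$-cycle or a $p^2$-cycle---fixing the remaining $n-s$ roots.

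The heart of the argument is then to play this $p$-power cycle against the block structure. Because $\sigma$ is a single cycle, a short combinatorial lemma shows that every block is contained either in $\operatorname{supp}(\sigma)$ or in $\operatorname{Fix}(\sigma)$, the sole exception being that one block swallows all of $\operatorname{supp}(\sigma)$. In the first alternative the support-blocks partition the $s$ moved points while the fixed-blocks partition the $n-s$ fixed points, forcing $e\mid s$ and $e\mid(n-s)$, hence $e\mid\gcd(s,n)$; in particular, whenever $p\nmid n$ this already yields $e=1$. In the remaining configurations one is reduced to block sizes that are powers of $p$ together with the exceptional case $\operatorname{supp}(\sigma)\subseteq B$, and here I would invoke the precise value of $s$. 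For $s=p$ prime the cycle moves a prime number of points, and when it moves more than half of them transitivity already forces primitivity (a transitive group of degree $n$ containing a $q$-cycle with $q$ prime and $n/2<q<n$ is primitive). For $s=p^2$ the only block system compatible with a $p^2$-cycle is the one cut out by residues modulo $p$ in the cyclic order defined by $\sigma$, and I would eliminate it by producing a second element of $G$ not respecting that partition---either the Frobenius at $p$, whose cycle type is constrained by $p\mid s$, or the inertia at a second admissible prime, whose cyclic ordering of the roots differs from that of $\sigma$. Having excluded every nontrivial block system, primitivity follows, and one may then conclude via \cite{DiMo}*{Th.~3.3A} in the intended applications.

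I expect the genuine obstacle to be precisely this last step: a single $p$- or $p^2$-cycle is by itself compatible with block systems of $p$-power block size, so primitivity cannot be read off from $\sigma$ alone, and one must combine $\sigma$ with an independent local datum in sufficiently general position. Two technical points feed into this difficulty and will need care: guaranteeing a prime $\ell\mid b$ for which the steep Newton segment is genuinely ramified---the case where $b$ is a perfect $p$-th power, so that every such segment has integral slope, must be treated separately, e.g.\ by passing to the reciprocal trinomial (whose middle exponent is $k-s$) or by exploiting the discriminant factor $D_0$---and verifying that the second element adjoined really fails to preserve the residue-class partition determined by $\sigma$.
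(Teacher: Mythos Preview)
Your approach is genuinely different from the paper's, and it has a real gap. The paper does not attempt a self-contained argument: it observes that $\disc(\phi)=h(\alpha)^2\disc(h)$, so the ramification data for the splitting field of $h$ are exactly those analysed by Movahhedi--Salinier for irreducible trinomials, and then invokes \cite{MoSa94}*{Cor.~1 to Th.~3} (together with the surrounding analysis in \cite{MoSa94}*{\S\S2--3}) to conclude primitivity. The hypothesis $s\in\{p,p^2\}$ is precisely the input to that corollary.

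Your sketch tries to reconstruct such a result from scratch, and the gap you flag yourself is a genuine obstruction, not a technicality. Producing a single inertial $s$-cycle (even when a suitable prime $\ell\mid b$ with $p\nmid v_\ell(b)$ exists, which is not guaranteed: nothing rules out $|b|=1$ or $b$ a perfect $p$-th power) is compatible with many nontrivial block systems. Your Case~B analysis only gives $e\mid\gcd(s,k-1)$, and there is no hypothesis forcing $p\nmid k-1$; your ``more than half'' shortcut for $s=p$ requires $p>(k-1)/2$, which the proposition does not assume; and in Case~A (one block swallowing $\mathrm{supp}(\sigma)$) you get only $e\ge s$, which again need not force $e=k-1$. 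The proposed fixes---Frobenius at~$p$, or a second inertial element ``in general position''---are not carried out, and there is no reason the cycle type of $\mathrm{Frob}_p$ should fail to respect a given block partition. In short, one $p$-power cycle is simply not enough information to rule out all block systems in degree $k-1$, and this is exactly the hard content that \cite{MoSa94} supplies and that the paper is content to cite. If you want a self-contained proof, you would have to reproduce the full inertial analysis of \cite{MoSa94} (which combines the ramification at \emph{several} primes and exploits the trinomial shape more delicately than a single Newton polygon), not just one local element.
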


\begin{proof}
The setting and the proof are slightly adapted from work~\cite{MoSa94} of Movahhedi--Salinier. Letting~$G_\phi$ be the Galois group of the splitting inside~$\C$ of~$\phi$ over~$\Q$, the assumptions imply that~$G_\phi$ is isomorphic to a transitive subgroup of~$\mathfrak S_{k-1}$. Moreover using a computation similar to~\eqref{eq:SquareQuotient} one has
\[
\mathrm{disc}(\phi)=h(\alpha)^2\mathrm{disc}( h)
\]
and thus the analysis of~\cite{MoSa94}*{\S 2, \S3} applies and we conclude by invoking~\cite{MoSa94}*{Cor.~1 to Th.~3}.
\end{proof}

\section*{Acknowledgements}

F. Jouve benefited from the financial support of the ANR through project ETIENE (ANR-24-CE93-0016) and is grateful to its members for valuable remarks and comments after some of the above results were presented on the occasion of the first project meeting held in Jussieu in June 2025.
A. Goodall was partially supported by the Czech Science Foundation (GA\v{C}R) grant 25-16627S.

\begin{bibdiv} 
\begin{biblist}

\bib{BCCP23}{article}{
 author={Beke, Csongor},
 author={Cs{\'a}ji, Gergely K{\'a}l},
 author={Csikv{\'a}ri, P{\'e}ter},
 author={Pituk, S{\'a}ra},
 title={Short proof of a theorem of Brylawski on the coefficients of the Tutte polynomial},
 journal={European Journal of Combinatorics},
 volume={110},
 pages={4},
 date={2023},
 publisher={Elsevier (Academic Press), London},
}

\bib{BCCP24}{article}{
 author={Beke, Csongor},
 author={Cs{\'a}ji, Gergely K{\'a}l},
 author={Csikv{\'a}ri, P{\'e}ter},
 author={Pituk, S{\'a}ra},
 title={Permutation Tutte polynomial},
 journal={European Journal of Combinatorics},
 volume={120},
 pages={28},
 date={2024},
 publisher={Elsevier (Academic Press), London},
}
\bib{Bha25}{article}{
   author={Bhargava, Manjul},
   title={Galois groups of random integer polynomials and van der Waerden's
   Conjecture},
   journal={Ann. of Math. (2)},
   volume={201},
   date={2025},
   number={2},
   pages={339--377},
}

\bib{BCM}{article}{
      author={Bohn, Adam},
   author={Cameron, Peter J.},
   author={M\"{u}ller, Peter},
   title={Galois groups of multivariate Tutte polynomials},
   journal={J. Algebraic Combin.},
   volume={36},
   date={2012},
   number={2},
   pages={223--230},
}

\bib{B72}{article}{
   author={Brylawski, Thomas H.},
   title={A decomposition for combinatorial geometries},
   journal={Trans. Amer. Math. Soc.},
   volume={171},
   date={1972},
   pages={235--282},
}

\bib{BO92}{incollection}{
    AUTHOR = {Brylawski, Thomas H.},
    author = {Oxley, James},
     TITLE = {The {T}utte polynomial and its applications},
 BOOKTITLE = {Matroid applications},
    SERIES = {Encyclopedia Math. Appl.},
    VOLUME = {40},
     PAGES = {123--225},
 PUBLISHER = {Cambridge Univ. Press, Cambridge},
      YEAR = {1992},
}

\bib{CaMo17}{article}{
   author={Cameron, Peter J.},
   author={Morgan, Kerri},
   title={Algebraic properties of chromatic roots},
   journal={Electron. J. Combin.},
   volume={24},
   date={2017},
   number={1},
   pages={Paper No. 1.21, 14},
}

\bib{Coh72}{article}{
   author={Cohen, Stephen D.},
   title={Uniform distribution of polynomials over finite fields},
   journal={J. London Math. Soc. (2)},
   volume={6},
   date={1972},
   pages={93--102},
}

\bib{Cox}{book}{
   author={Cox, David A.},
   title={Galois theory},
   series={Pure and Applied Mathematics (Hoboken)},
   edition={2},
   publisher={John Wiley \& Sons, Inc., Hoboken, NJ},
   date={2012},
   pages={xxviii+570},
   isbn={978-1-118-07205-9},
}

\bib{Crapo}{article}{
   author={Crapo, Henry H.},
   title={A higher invariant for matroids},
   journal={J. Combinatorial Theory},
   volume={2},
   date={1967},
   pages={406--417},
}

\bib{DiMo}{book}{
 author={Dixon, John D.},
 author={Mortimer, Brian},
 book={
 title={Permutation groups},
 publisher={New York, NY: Springer-Verlag},
 },
 title={Permutation groups},
 series={Graduate Texts in Mathematics},
 volume={163},
 pages={xii + 346},
 date={1996},
 publisher={Springer, Cham},
}

\bib{DGT}{article}{
   author={Dujella, Andrej},
   author={Gusi\'c, Ivica},
   author={Tichy, Robert F.},
   title={On the indecomposability of polynomials},
   journal={\"Osterreich. Akad. Wiss. Math.-Natur. Kl. Sitzungsber. II},
   volume={214},
   date={2005},
   pages={81--88 (2006)},
}

\bib{DuFo}{book}{
 author={Dummit, David S.},
 author={Foote, Richard M.},
 book={
 title={Abstract algebra},
 publisher={Chichester: Wiley},
 },
 title={Abstract algebra},
 edition={3rd ed.},
 pages={xii + 932},
 date={2004},
}

\bib{El}{misc}{    
    title={A family of polynomials with symmetric {G}alois group},    
    author={Elkies, Noam D.},    
    note={URL: \url{https://mathoverflow.net/q/75367} (version: 2011-10-01)},    
    eprint={https://mathoverflow.net/q/75367},    
    organization={MathOverflow}  
}

\bib{EMGMNV22}{article}{
   author={Ellis-Monaghan, Joanna A.},
   author={Goodall, Andrew J.},
   author={Moffatt, Iain},
   author={Noble, Steven D.},
   author={Vena, Llu\'is},
   title={Irreducibility of the Tutte polynomial of an embedded graph},
   journal={Algebr. Comb.},
   volume={5},
   date={2022},
   number={6},
   pages={1337--1351},
}

\bib{FiMo}{article}{
   author={Filaseta, Michael},
   author={Moy, Richard},
   title={On the Galois group over~$\mathbb{Q}$ of a truncated binomial
   expansion},
   journal={Colloq. Math.},
   volume={154},
   date={2018},
   number={2},
   pages={295--308},
}

\bib{Gal72}{article}{
   author={Gallagher, P. X.},
   title={The large sieve and probabilistic Galois theory},
   conference={
      title={Analytic number theory},
      address={Proc. Sympos. Pure Math., Vol. XXIV, St. Louis Univ., St.
      Louis, Mo.},
      date={1972},
   },
   book={
      series={Proc. Sympos. Pure Math.},
      volume={Vol. XXIV},
      publisher={Amer. Math. Soc., Providence, RI},
   },
   date={1973},
   pages={91--101},
}

\bib{Gao01}{article}{
 author={Gao, Shuhong},
 title={Absolute irreducibility of polynomials via Newton polytopes},
 journal={Journal of Algebra},
 volume={237},
 number={2},
 pages={501--520},
 date={2001},
 publisher={Elsevier (Academic Press), San Diego, CA},
}

\bib{GJS}{article}{
   author={Goodall, Andrew J.},
   author={Jouve, Florent},
   author={Sereni, Jean-S\'ebastien},
   title={Vector spaces spanned by Tutte polynomials},
   status={preprint},
   eprint={https://hal.science/hal-04485620v1/},
   }
   
\bib{Gordon97}{article}{
   author={Gordon, Gary},
   title={A~$\beta$ invariant for greedoids and antimatroids},
   journal={Electron. J. Combin.},
   volume={4},
   date={1997},
   number={1},
   pages={Research Paper 13, 13},
}

 \bib{Gordon12}{article}{
 author={Gordon, Gary},
 title={On Brylawski's generalized duality},
 journal={Mathematics in Computer Science},
 volume={6},
 number={2},
 pages={135--146},
 date={2012},
 publisher={Springer (Birkh{\"a}user), Basel},
}
   
   \bib{gordon15}{article}{
   author={Gordon, Gary},
   title={Linear relations for a generalized Tutte polynomial},
   journal={Electron. J. Combin.},
   volume={22},
   date={2015},
   number={1},
   pages={Paper 1.79, 30},
}

\bib{Har}{article}{
   author={Harrington, Joshua},
   title={On the factorization of the trinomials~$x^n+cx^{n-1}+d$},
   journal={Int. J. Number Theory},
   volume={8},
   date={2012},
   number={6},
   pages={1513--1518},
}

\bib{Har79}{article}{
   author={Harris, Joe},
   title={Galois groups of enumerative problems},
   journal={Duke Math. J.},
   volume={46},
   date={1979},
   number={4},
   pages={685--724},
}

\bib{Hux68}{article}{
   author={Huxley, M. N.},
   title={The large sieve inequality for algebraic number fields},
   journal={Mathematika},
   volume={15},
   date={1968},
   pages={178--187},
}

\bib{KaNa}{article}{
   author={Katz, Nicholas M.},
   author={Nakayama, Yuta},
   title={Strange congruences},
   status={preprint},
   eprint={https://web.math.princeton.edu/~nmk/KN8ter.pdf},
   }

\bib{KlTe}{article}{
 author={Klahn, Benjamin},
 author={Technau, Marc},
 title={Galois groups of {{\(\binom{n}{0} + \binom{n}{1} X + \cdots + \binom{n}{6} X^6\)}}},
 journal={International Journal of Number Theory},
 volume={19},
 number={10},
 pages={2443--2450},
 date={2023},
 publisher={World Scientific, Singapore},
}

\bib{Kow08}{book}{
   author={Kowalski, E.},
   title={The large sieve and its applications},
   series={Cambridge Tracts in Mathematics},
   volume={175},
   note={Arithmetic geometry, random walks and discrete groups},
   publisher={Cambridge University Press, Cambridge},
   date={2008},
   pages={xxii+293},
}

\bib{Lang}{book}{
   author={Lang, Serge},
   title={Algebra},
   series={Graduate Texts in Mathematics},
   volume={211},
   edition={3},
   publisher={Springer-Verlag, New York},
   date={2002},
   pages={xvi+914},
}

\bib{LeSo09}{article}{
   author={Leykin, Anton},
   author={Sottile, Frank},
   title={Galois groups of Schubert problems via homotopy computation},
   journal={Math. Comp.},
   volume={78},
   date={2009},
   number={267},
   pages={1749--1765},
}

\bib{LNV84}{article}{
   author={Llorente, P.},
   author={Nart, E.},
   author={Vila, N.},
   title={Discriminants of number fields defined by trinomials},
   journal={Acta Arith.},
   volume={43},
   date={1984},
   number={4},
   pages={367--373},
}   

\bib{Mar04}{article}{
   author={Martin, Paulo A.},
   title={The Galois group of~$x^n-x^{n-1}-\dots-x-1$},
   journal={J. Pure Appl. Algebra},
   volume={190},
   date={2004},
   number={1-3},
   pages={213--223},
}

\bib{MdMN}{article}{
      author={Merino, C.},
   author={de Mier, A.},
   author={Noy, M.},
   title={Irreducibility of the Tutte polynomial of a connected matroid},
   journal={J. Combin. Theory Ser. B},
   volume={83},
   date={2001},
   number={2},
   pages={298--304},
}

\bib{Mor12}{article}{
   author={Morgan, Kerri},
   title={Galois groups of chromatic polynomials},
   journal={LMS J. Comput. Math.},
   volume={15},
   date={2012},
   pages={281--307},
}

\bib{MoSa94}{article}{
   author={Movahhedi, A.},
   author={Salinier, A.},
   title={The primitivity of the Galois group of a trinomial},
   journal={J. London Math. Soc. (2)},
   volume={53},
   date={1996},
   number={3},
   pages={433--440},
}

\bib{Mul95}{article}{
   author={M\"uller, Peter},
   title={Primitive monodromy groups of polynomials},
   conference={
      title={Recent developments in the inverse Galois problem},
      address={Seattle, WA},
      date={1993},
   },
   book={
      series={Contemp. Math.},
      volume={186},
      publisher={Amer. Math. Soc., Providence, RI},
   },
   date={1995},
   pages={385--401},
}

\bib{Osa}{article}{
   author={Osada, Hiroyuki},
   title={The Galois groups of the polynomials~$X^n+aX^l+b$},
   journal={J. Number Theory},
   volume={25},
   date={1987},
   number={2},
   pages={230--238},
}

\bib{Sel}{article}{
   author={Selmer, Ernst S.},
   title={On the irreducibility of certain trinomials},
   journal={Math. Scand.},
   volume={4},
   date={1956},
   pages={287--302},
}

\bib{Ser}{book}{
   author={Serre, Jean-Pierre},
   title={Topics in Galois theory},
   series={Research Notes in Mathematics},
   volume={1},
   note={Lecture notes prepared by Henri Damon;
   With a foreword by Darmon and the author},
   publisher={Jones and Bartlett Publishers, Boston, MA},
   date={1992},
   pages={xvn+117},
}

\bib{SoWh15}{article}{
   author={Sottile, Frank},
   author={White, Jacob},
   title={Double transitivity of Galois groups in Schubert calculus of
   Grassmannians},
   journal={Algebr. Geom.},
   volume={2},
   date={2015},
   number={4},
   pages={422--445},
}

\bib{Swa}{article}{
   author={Swan, Richard G.},
   title={Factorization of polynomials over finite fields},
   journal={Pacific J. Math.},
   volume={12},
   date={1962},
   pages={1099--1106},
 }

\bib{Tur}{article}{
   author={Turnwald, Gerhard},
   title={On Schur's conjecture},
   journal={J. Austral. Math. Soc. Ser. A},
   volume={58},
   date={1995},
   number={3},
   pages={312--357},
}

\bib{Tutte}{article}{
   author={Tutte, W. T.},
   title={Connectivity in matroids},
   journal={Canadian J. Math.},
   volume={18},
   date={1966},
   pages={1301--1324},
}

\bib{vdW36}{article}{
   author={van der Waerden, B. L.},
   title={Die Seltenheit der reduziblen Gleichungen und der Gleichungen mit
   Affekt},
   journal={Monatsh. Math. Phys.},
   volume={43},
   date={1936},
   number={1},
   pages={133--147},
}
\bib{Xyl}{article}{
   author={Xylouris, Triantafyllos},
   title={On the least prime in an arithmetic progression and estimates for
   the zeros of Dirichlet~$L$-functions},
   journal={Acta Arith.},
   volume={150},
   date={2011},
   number={1},
   pages={65--91},
}

\end{biblist}
\end{bibdiv}

\end{document}